\renewcommand{\subset}{\subseteq}
\newtheorem{theorem}            {Theorem}[section]
\newtheorem{corollary}          [theorem]{Corollary}
\newtheorem{proposition}        [theorem]{Proposition}
\newtheorem{lemma}              [theorem]{Lemma}
\newtheoremstyle{note}
     {}
     {}
     {}
     {}
     {\bfseries} 
     {.}
     {.5em}
     {}
\theoremstyle{note}
\newtheorem{remark}[theorem]{Remark}
\newtheorem{example}[theorem]{Example}
\newcommand{\df}[1]{{\it{#1}}{\index{#1}}}
\def\rR{\mathfrak R}
\def\cL{\mathcal L}
\def\cX{\mathcal X}
\def\cC{\mathcal C}
\def\cA{\mathscr{A}}
\def\AD{\mathbb A(\mathbb D)}
\def\fs{\mathcal F}
\def\fsg{\mathscr F}
\def\fn{N}
\def\bfs{\mathfrak{F}}
\def\fz{\mathfrak{z}}
\def\fw{\mathfrak{w}}
\def\At{\tilde{A}}
\def\Dinf{\mathbb D_\infty}
\def\oD{\overline{\mathbb D}}
\newcommand{\ip}[2]{{\left<#1,#2\right>}}
\begin{document}

\title[Dilations and Constrained Algebras]{Dilations and Constrained Algebras}

\author[Dritschel]{Michael A.~Dritschel}
\address{School of Mathematics {\&} Statistics\\
  Newcastle University \\
  Newcastle upon Tyne\\
  NE1 7RU\\
  UK
   }
   \email{michael.dritschel@ncl.ac.uk}

\author[Jury]{Michael T.~Jury${}^1$}
\address{Department of Mathematics\\
  University of Florida\\
  Box 118105\\
  Gainesville, FL 32611-8105\\
  USA
   }
   \email{mjury@ufl.edu}

\author[McCullough]{Scott McCullough${}^2$}
\address{Department of Mathematics\\
  University of Florida\\
  Box 118105\\
  Gainesville, FL 32611-8105\\
  USA
   }
   \email{sam@ufl.edu}

\thanks{${}^1$ Supported by NSF grant DMS 1101461. ${}^2$ Supported by
  NSF grant DMS 1101137.}

\subjclass[2010]{47A20 (Primary), 30C40, 30E05, 46E22, 46E25, 46E40,
  46L07, 47A25, 47A48, 47L55 (Secondary)}

\date{\today}

\keywords{dilations, inner functions, Herglotz representations,
  completely contractive representations, realizations,
  Nevanlinna-Pick interpolation}

\maketitle

\begin{abstract}
  It is well known that contractive representations of the disk
  algebra are completely contractive.  The Neil algebra $\cA$ is the
  subalgebra of the disk algebra consisting of those functions $f$ for
  which $f^\prime(0) = 0$.  There is a complete isometry from the
  algebra $R(W)$ of rational functions with poles off of the
  distinguished variety $W = \{(z,w): z^2= w^3, \ \ |z|<1\}$ to $\cA$.
  We prove that there are contractive representations of $\cA$ which
  are not completely contractive, and furthermore provide a Kaiser and
  Varopoulos inspired example whereby $z$ and $w$ in $W$ are
  contractions, yet the resulting representation of $R(W)$ is not
  contractive.  We also present a characterization of those
  contractive representations which are completely contractive.
  Finally, we show that for the variety $\mathcal V=\{(z,w): z^2= w^2, \ \
  |z|<1\}$, all contractive representations of the algebra $R(\mathcal V)$ of
  rational functions with poles off $\mathcal V$ are completely contractive,
  and we as well provide a simplified proof of Agler's analogous
  result over an annulus.
\end{abstract}

\section{Introduction}

Let \df{$\mathbb D$} denote the unit disk in the complex plane and
\df{$\oD$} its closure.  The disk algebra, \df{$\AD$}, is the closure
of analytic polynomials in \df{$C(\oD)$}, the space of continuous
functions on $\oD$ with the supremum norm.  The \df{Neil algebra} is
the subalgebra of the disk algebra given by
\begin{equation*}
  \cA =\{f\in\AD : f^\prime(0)=0\} = \mathbb C + z^2 \AD.
\end{equation*}
Constrained algebras, of which $\cA$ is one of the simplest examples,
are of current interest as a venue for function theoretic operator
theory, such as Pick interpolation.  See for instance
\cite{DPRS,R,JKM,BH} and the references therein.

Let $H$ denote a complex Hilbert space and \df{$B(H)$} the bounded
linear operators on $H$.  A unital representation $\pi:\cA\to B(H)$ on
$H$ is \df{contractive} if $\|\pi(f)\|\le \|f\|$ for all $a\in \cA$,
where $\|f\|$ represents the norm of $f$ as an element of C$(\oD)$ and
$\|\pi(f)\|$ is the operator norm of $\pi(f)$.  Unless otherwise
indicated, in this article representations are unital and contractive.
 
Let $M_n(\cA)$ denote the $n\times n$ matrices with entries from
$\cA$. The norm $\|F\|$ of an element $F=(f_{j,\ell})$ in $M_n(\cA)$
is the supremum of the set $\{\|F(z)\|: z\in \mathbb D\}$, where
$\|F(z)\|$ is the operator norm of the $n\times n$ matrix $F(z)$.
Applying $\pi$ to each entry of $F$,
\begin{equation*}
  \pi^{(n)}(F)= 1_n\otimes\pi(F) = \begin{pmatrix} \pi(f_{j,\ell})
  \end{pmatrix}
\end{equation*}
produces an operator on the Hilbert space $\bigoplus_1^n H$ and
$\|\pi^{(n)}(F)\|$ is then its operator norm. The mapping $\pi$ is
\df{completely contractive} if for each $n$ and $F\in M_n(\cA)$,
\begin{equation*}
  \|\pi^{(n)} (F)\|\le \|F\|.
\end{equation*}

The following theorem is the first  main result of this article.  

\begin{theorem}
 \label{thm:mainneg}
 There exists a finite dimensional Hilbert space and a unital
 contractive representation $\pi:\cA\to B(H)$ which is not completely
 contractive. In fact, there exists a $2\times 2$ matrix rational
 inner function $F$ (with poles outside of the closed disk) such that
 $\|F\|\le 1$, but $\|\pi(F)\|>1$.
\end{theorem}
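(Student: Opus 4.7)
The plan is to construct an explicit finite-dimensional example in the spirit of Parrott, Varopoulos, and Kaiser, adapted to the constrained algebra $\cA$. A unital representation $\pi:\cA\to B(H)$ is determined by the commuting pair $(A,R):=(\pi(z^2),\pi(z^3))$ subject to the single relation $A^3=R^2$, since $\cA=\mathbb C+z^2\AD$ is generated as a Banach algebra by $1$, $z^2$, $z^3$.  Contractivity of $\pi$ then means $\|f(A,R)\|\le\|f\|_\infty$ for every $f\in\cA$, and complete contractivity is the analogous statement for matrix-valued $F\in M_n(\cA)$.  The task thus splits into producing a candidate $(A,R)$ which is scalar contractive and exhibiting a $2\times 2$ matrix inner function $F$ whose image has norm exceeding one.

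I would first take $H$ of small finite dimension and choose $A$, $R$ to be commuting nilpotent operators (say, strictly upper-triangular) satisfying $A^3=R^2=0$ automatically, with a handful of free scalar parameters.  Then $\pi(f)$ is a finite linear combination of $I,A,R,A^2,AR,\dots$ involving only the first several Taylor coefficients of $f$ at $0$, so the full representation of $\cA$ is encoded by a small amount of linear-algebra data, avoiding any convergence issues.

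The second and most delicate step is verifying scalar contractivity of $\pi$ for a suitable parameter choice.  Because $\pi(f)$ depends only on finitely many Taylor coefficients of $f$, scalar contractivity is a finite-dimensional extremal problem: one needs $\|\pi(f)\|\le 1$ whenever $\|f\|_\infty\le 1$ in $\cA$.  By standard duality this is equivalent to positivity of an explicit small Pick-type matrix encoding the side constraint $f'(0)=0$ — the heart of the constrained interpolation theory for $\cA$ (cf.\ \cite{DPRS,JKM}).  The parameters of $(A,R)$ must be threaded through so that this scalar Pick matrix is positive semidefinite while the analogous $2\times 2$ matrix Pick matrix fails to be positive.

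Finally I would produce the witnessing $F\in M_2(\cA)$ with $\|F\|_\infty\le 1$ and $\|\pi^{(2)}(F)\|>1$, a natural candidate being a $2\times 2$ matrix rational inner function (for instance a matrix Blaschke product) whose entries vanish to second order at $0$ and whose Taylor content in the $z^2$ and $z^3$ modes is tuned to expose the failure; the computation of $\|\pi^{(2)}(F)\|$ then reduces to an explicit matrix-norm calculation on $H^{\oplus 2}$.  The principal obstacle is the scalar contractivity verification described above; once that is in hand, the failure of complete contractivity follows from the explicit linear-algebra computation in this final step.
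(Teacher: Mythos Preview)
Your proposal is a research plan rather than a proof, and the step you yourself flag as ``the principal obstacle'' is exactly where the argument has a genuine gap. You assert that scalar contractivity of a nilpotent representation reduces to ``positivity of an explicit small Pick-type matrix,'' but this is not what constrained interpolation theory for $\cA$ gives you. The characterization of the unit ball of $\cA$ in \cite{DP} (Proposition~\ref{prop:scalar-cone} here) involves an integral over the \emph{entire} one-parameter family of test functions $\Psi=\{\psi_\lambda:\lambda\in\Dinf\}$, not a single Pick matrix. Even if your $(A,R)$ are nilpotent so that $\pi(f)$ depends on only finitely many Taylor coefficients, the set of admissible coefficient tuples is described by a constrained Carath\'eodory--Fej\'er condition, and checking $\|\pi(f)\|\le 1$ over that set is a genuine optimization problem, not a single positivity check. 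You have not produced specific $(A,R)$, nor carried out this verification, nor exhibited the witnessing $F$; so nothing is actually proved.

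The paper's route is fundamentally different and is designed precisely to avoid the difficulty you identify. Rather than constructing $\pi$ first and then struggling to verify contractivity, the paper first constructs the matrix inner function $F=z^2\Phi$ (with $\Phi$ a specific non-diagonalizable $2\times 2$ inner function), fixes a finite set $\fs\subset\mathbb D$, and proves via a delicate measure-theoretic and rank argument (Theorem~\ref{thm:cone-must-diagonalize} and the lemmas of Section~\ref{sec:proof}) that the kernel $\Sigma_F=(I-F(x)F(y)^*)_{x,y\in\fs}$ does \emph{not} lie in the cone $C_{2,\fs}$ generated by the test functions. A Hahn--Banach separation then yields a linear functional $\Lambda$ nonnegative on $C_{2,\fs}$ but negative on $\Sigma_F$, and the GNS construction from $\Lambda$ produces the representation $\pi$ (Proposition~\ref{prop:counterifsep}). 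Crucially, contractivity of this $\pi$ is \emph{automatic}: since $1-g(x)g(y)^*\in C_{2,\fs}$ for every scalar $g\in\cA$ with $\|g\|_\infty\le 1$ (Proposition~\ref{prop:mad+}), positivity of $\Lambda$ on the cone immediately gives $\|\pi(g)\|\le 1$. The hard work is therefore shifted from verifying contractivity of $\pi$ to proving $\Sigma_F\notin C_{2,\fs}$ --- a concrete statement about a specific $F$ --- which is where the paper's technical effort goes. Your plan inverts this and leaves the hard step undone.
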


Theorem \ref{thm:mainpos} gives a necessary and sufficient condition
for a unital representation of $\cA$ to be completely contractive.  An
operator $T\in B(H)$ is a \df{contraction} if it has operator norm
less than or equal to one.  Since the algebra $\cA$ is generated by
$z^2$ and $z^3$, a contractive representation $\pi$ of $\cA$ is
determined by the pair of contractions $X=\pi(z^2)$ and $Y=\pi(z^3)$.
In the spirit of the examples of Kaiser and Varopoulos \cite{KV} for
the polydisk $\mathbb D^d$ ($d>2$), Corollary \ref{cor:noxy} asserts
the existence of commuting contractions $X$ and $Y$ such that
$X^3=Y^2$, but for which the unital representation $\tau$ of $\cA$
with$X = \tau(z^2)$ and $Y = \tau(z^3)$ is not contractive.

Given $0<q<1,$ let $\mathbb A$ denote the annulus $\{z\in\mathbb C:
r<|z|<1\}$ and $A(\mathbb A)$ the annulus algebra, consisting of those
functions continuous on the closure of $\mathbb A$ and analytic in
$\mathbb A$ in the uniform norm. A well known theorem of Agler
\cite{Agler} says that contractive representations of $A(\mathbb A)$
are completely contractive.  If $W$ is a variety in $\mathbb C^2$
which intersects the (topological) boundary of the bidisk $\mathbb
D^2$ only in the torus $\mathbb T^2$, then the set $V= W\cap \mathbb
D^2$ is called a \df{distinguished variety}. 
The annuli (parametrized by $0<q<1$) can be identified with the
distinguished varieties determined by
\begin{equation*}
  z^2 = \frac{w^2-t^2}{1-t^2w^2}
\end{equation*}
for $0<t<1$ \cite{Rudin,Bell,Bell2}.  The limiting case, $z^2=w^2$
corresponds to two disks intersecting at the origin $(0,0)\in\mathbb
C^2$.  Section \ref{sec:ratforA} contains a streamlined proof of
Agler's result which readily extends to show that contractive
representations of the algebra associated to the variety $z^2=w^2$ are
also completely contractive.  See Theorem \ref{thm:V-dilation} and
Corollary \ref{cor:V-dilation}.

The remainder of this introduction places Theorems \ref{thm:mainneg}
and \ref{thm:mainpos} and Corollary \ref{cor:noxy}, as well as Theorem
\ref{thm:V-dilation} and Corollary \ref{cor:V-dilation} in the larger
context of rational dilation.

\subsection{Rational dilation} 

The Sz.-Nagy dilation theorem states that every contraction operator
dilates to a unitary operator.  Unitary operators can be characterized
in various ways, and in particular, they are normal operators with
spectrum contained in the boundary of $\mathbb D$; that is, $\mathbb
T$.  A corollary of the Sz.-Nagy dilation theorem is the von~Neumann
inequality, which implies that $T$ is a contraction if and only if
$\|p(T)\| \leq \|p\|$ for every polynomial $p$, where $\|p\|$ is the
again the norm of $p$ in $C(\oD)$.

More generally, following Arveson \cite{Arveson}, given a compact
subset $X$ of $\mathbb C^d$, let $R(X)$ denote the algebra of rational
functions with poles off $X$ with the norm $\|r\|_X$ equal to the
supremum of the values of $|r(x)|$ for $x\in X$.  The set $X$ is a
\df{spectral set} for the commuting $d$-tuple $T$ of operators on the
Hilbert space $H$ if the spectrum of $T$ lies in $X$ and $\|r(T)\|\le
\|r\|_X$ for each $r\in R(X)$.  If $N$ is also a $d$-tuple of
commuting operators with spectrum in $X$ and acting on the Hilbert
space $K$, then $T$ \df{dilates} to $N$ provided there is an isometry
$V:H\to K$ such that $r(T) = V^* r(N)V$ for all $r\in R(X)$.  The
rational dilation problem asks: if $X$ is a spectral set for $T$ does
$T$ dilate to a tuple $N$ of commuting normal operators with spectrum
in the Shilov boundary of $X$ relative to the algebra $R(X)$?

Choosing $X$ to be the closure of a finitely connected domain $D$ in
$\mathbb C$ with analytic boundary, it turns out the Shilov boundary
is the topological boundary and the problem has a positive answer when
$X$ is an annulus \cite{Agler}.  On the other hand, for planar domains
of higher connectivity, rational dilation fails (at least when the
Schottky double is hyperelliptic (automatic for triply connected
domains), though this is probably an artifact of the proofs and
rational dilation will also fail without this extra condition)
\cite{DMrat,AHR,P}.

With the choice of $X=\oD^d$, the question becomes, if
$T=(T_1,\dots,T_d)$ is a tuple of commuting operators acting on a
Hilbert space $H$ and if
\begin{equation*}
  \|p(T_1,\dots,T_d)\| \le \|p\|_X
\end{equation*}
for every analytic polynomial $p=p(z_1,\dots,z_d)$ in $d$-variables,
does there exist a Hilbert space $K$, an isometry $V:H\to K$, and a
commuting tuple $N=(N_1,\dots,N_d)$ of normal operators on $K$ with
spectrum in $ \mathbb T^d$ (the Shilov boundary of $X$) such that
$p(T) = V^* p(N)V$ for every polynomial $p$?  And\^{o}'s theorem
implies the result is true for the bidisk $\mathbb D^2.$ An example
due to Parrott implies that rational dilation fails for the polydisk
$\mathbb D^d$, $d>2$.  Thus as things stand, the rational dilation
problem has been settled for the disk, the annulus, hyperelliptic
planar domains, and for polydisks.

Arveson \cite{Arveson} gave a profound reformulation of the rational
dilation problem in terms of contractive and completely contractive
representations.
A tuple $T$ acting on the Hilbert space $H$ with spectrum in $X$
determines a unital representation of $\pi_T$ of $R(X)$ on $H$ via
$\pi_T(r) = r(T)$ and the condition that $X$ is a spectral set for $T$
is equivalent to the condition that this representation is
contractive.

Recall that a representation $\pi$ of $R(X)$ is \df{completely
  contractive} if for all $n$ and all $F\in M_n(R(X))$, $\pi^{(n)}(F)
:= (\pi(F_{i,j}))$ is contractive, the norm of $F$ being given by
$\|F\|_\infty =\sup\{ \|F(x)\|: x\in X\}$ with $\|F(x)\|$ the operator
norm of $F(x)$. Arveson showed that $T$ dilates to a tuple $N$ with
spectrum in the (Shilov) boundary of $X$ (with respect to $R(X)$) if
and only if $\pi_T$ is completely contractive. Thus the rational
dilation problem can be reformulated as: Is every contractive
representation of $R(X)$ completely contractive?

The subset $W=\{(z,w)\in\mathbb D^2 : z^2=w^3\}$ of $\mathbb C^2$ 
is a particularly simple but interesting example of a distinguished
variety, called the \df{Neil parabola}.
The mapping from $R(W)$ to the Neil algebra $\cA$ sending $p(z,w)$ to
$p(t^2,t^3)$ is a (complete) isometry.  Much of this paper 
concentrates on studying the connection between contractive and
completely contractive representations of $\cA$, though the results
are readily translated to $R(W)$.  Thus, Theorem \ref{thm:mainneg}
implies that there are contractive representations of $R(W)$ which are
not completely contractive.

Note that excluding a cusp at $(0,0)$, $W$ is a manifold, and this
cusp makes things just different enough so that $R(W)$ a tractable
though nontrivial algebra on which to study the rational dilation
problem. Indeed, many mathematicians have found distinguished
varieties to be attractive venues for function theoretic operator
theory \cite{R,AM05, AM06, AKM, Knese, Vegulla, JKM} and in
particular, they provide interesting examples when trying to delineate the border
between those domains where rational dilation holds and those where it
fails.  Theorem \ref{thm:V-dilation} say that on the distinguished
variety $\mathcal V = \{(z,w)\in\mathbb D^2: z^2=w^2\}$, every contractive
representation of $R(\mathcal V)$ is completely contractive; that is, rational
dilation holds.


While rational dilation fails for the Neil parabola, in Theorem
\ref{thm:mainpos} we also provide a characterization of the completely
contractive representations of $\cA$ \cite{B}.  However, this positive
result is not used to establish Theorem \ref{thm:mainneg}.  Rather the
proof of Theorem \ref{thm:mainneg} essentially comes down to a cone
separation argument.  The mechanics of this argument appear in
Section~\ref{sec:cone-sep}.  The construction of the counterexample
and preliminary results are in Section~\ref{sec:counter}. The proof of
Theorem \ref{thm:mainneg} concludes in Section~\ref{sec:proof}, while
the statement and proof of Theorem \ref{thm:mainpos} and general facts
about representations of $\cA$ are the subject of
Section~\ref{sec:repns}.

The article conclude with Section \ref{sec:ratforA}, which contains a
proof of Agler's rational dilation theorem for the annulus that takes
advantage of subsequent developments in the theory of matrix-valued
functions of positive real part on multiply connected domains.  As a
limiting case, we prove Theorem \ref{thm:V-dilation}, which shows that
rational dilation holds for the algebra $R(\mathcal V)$, $\mathcal V=\{(z,w)\in\mathbb
D^2: z^2=w^2\}$.  Corollary \ref{cor:V-dilation} then gives a
reasonably tractable condition to determine if a given representation
of $R(\mathcal V)$ is contractive, and hence completely contractive.

\section{Representations of $\cA$}
\label{sec:repns}

In this section we characterize the completely contractive
representations of $\cA$ and consider some examples.  The
characterization of contractive representations is essentially
contained in the paper~\cite{DP} on test functions for $\cA$, and this
is described in the next section.

As a (unital) Banach algebra, $\cA$ is generated by the functions
$z^2$ and $z^3$.  It follows that any bounded unital representation is
determined by its values on these two functions.  If $\pi:\cA\to B(H)$
is a bounded representation, $X=\pi(z^2)$ and $Y=\pi(z^3),$ then $X,Y$
are commuting operators which satisfy $X^3=Y^2$.  If we further insist
that $\pi$ is contractive, then $X$ and $Y$ are contractions.  In
summary, every contractive representation $\pi:\cA\to B(H)$ determines
a pair of commuting contractions $X,Y$ such that $X^3=Y^2$.  However,
as we see in Corollary~\ref{cor:noxy}, not every such pair gives rise
to a contractive representation.

The following theorem characterizes the completely contractive
representations of $\cA$.  For Hilbert spaces $H\subset K$, let $P_H$
denote the orthogonal projection of $K$ onto $H$ and $|_H$ the
inclusion of $H$ into $K$.

\begin{theorem}[\cite{B}]
  \label{thm:mainpos}
  A representation $\pi:\cA\to B(H)$ is completely contractive if and
  only if there is a Hilbert space $K\supset H$ and a unitary operator
  $U \in B(K)$ such that for all $n\geq 0$, $n\neq 1$,
  \begin{equation}\label{eq:cc-dil}
    \pi(z^n) = P_H U^n |_H.
  \end{equation}
\end{theorem}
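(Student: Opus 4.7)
The plan is to recognize the statement as a Sarason-type dilation theorem for $\cA$, taking advantage of the fact that $\cA$ is generated (as a closed unital subalgebra of $\AD$) by $\{z^n : n \geq 0,\ n \neq 1\}$. Thus a unitary $U$ satisfying the compression formula on these generators already determines $\pi$ on all of $\cA$, and the task reduces to producing such a $U$ exactly when $\pi$ is completely contractive.

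For the sufficiency direction, given $K \supset H$ and a unitary $U$ with $\pi(z^n) = P_H U^n|_H$ for $n \neq 1$, I would expand any $f \in \cA$ as $f = f(0) + \sum_{n \geq 2} a_n z^n$ and conclude by the continuous functional calculus for $U$ that $\pi(f) = P_H f(U)|_H$; the spectral theorem then gives $\|\pi(f)\| \leq \|f\|_\infty$. Applied entrywise to $F = (f_{ij}) \in M_m(\cA)$, the same compression identity yields $\pi^{(m)}(F) = (I_m \otimes P_H)(f_{ij}(U))_{i,j}(I_m \otimes |_H)$, whose norm is at most $\|F\|_\infty$ since $U$ is normal with spectrum in $\mathbb T$. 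Hence $\pi$ is completely contractive.

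For necessity, I would invoke the Arveson--Stinespring machinery. Embedding $\cA \hookrightarrow \AD \hookrightarrow C(\mathbb T)$ via boundary values, complete contractivity of $\pi$ produces a unital completely positive extension $\widetilde\pi : C(\mathbb T) \to B(H)$: first extend $\pi$ to the operator system $\cA + \cA^*$ by $f + g^* \mapsto \pi(f) + \pi(g)^*$ (the standard passage from a unital completely contractive map on a unital operator algebra to a UCP map on the enveloping operator system), then apply Arveson's extension theorem to reach all of $C(\mathbb T)$. Stinespring's theorem then yields $K \supset H$ and a unital $*$-representation $\rho : C(\mathbb T) \to B(K)$ with $\widetilde\pi(\cdot) = P_H \rho(\cdot)|_H$. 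Setting $U := \rho(z)$ gives a unitary in $B(K)$, and since $z^n \in \cA$ for every $n \geq 0$ with $n \neq 1$, one obtains $\pi(z^n) = \widetilde\pi(z^n) = P_H U^n |_H$.

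The delicate step, and the principal obstacle, is the first half of the necessity argument: because $\cA$ is non-selfadjoint, Arveson's theorem is not directly applicable, and one must first verify that the prescription $f + g^* \mapsto \pi(f) + \pi(g)^*$ is well-defined on $\cA + \cA^*$ and is in fact completely positive. The explicit exclusion of $n = 1$ from the statement is natural here: since $z \notin \cA$, the value $\rho(z)$ is an artifact of the dilation, and $P_H \rho(z)|_H$ need not be an invariant of $\pi$.
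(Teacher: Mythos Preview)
Your proposal is correct and follows essentially the same approach as the paper: for necessity, both extend $\pi$ to a UCP map on the operator system $\cA+\cA^*$ (using $\cA\cap\cA^*=\mathbb C 1$ and the standard passage from unital completely contractive to UCP), then apply Arveson extension and Stinespring dilation; for sufficiency, your direct argument via $\pi(f)=P_H f(U)|_H$ and the spectral theorem is a minor variant of the paper's route through the UCP compression of the $*$-representation $z^n\mapsto U^n$ of $C(\mathbb T)$. One small caution: your expansion $f=f(0)+\sum_{n\ge 2} a_n z^n$ need not converge in norm for general $f\in\cA$, so you should phrase this step via density of polynomials in $\cA$ together with the standing assumption that $\pi$ is contractive (hence bounded).
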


This is a consequence of the Sz.-Nagy dilation theorem together with
applications of the Arveson extension and Stinespring dilation
theorems.  In the case of $\AD$, by the Sz.-Nagy dilation theorem
every completely contractive representation $\pi:\AD\to B(H)$ is
determined by a contraction $T$, with $\pi(z^n)=T^n$, and $T^n=P_H
U^n|_H$ for some unitary $U$ and all $n\geq 0$.  Thus a simple way to
construct completely contractive representations of $\cA$ is to fix a
contraction $T$ and restrict: put $\pi(z^2)=T^2$ and $\pi(z^3)=T^3$.
However, in spite of Theorem~\ref{thm:mainpos} it is {\em not} the
case that every completely contractive representation of $\cA$ arises
in this way, as we see in Example~\ref{eg:no-T} below.

\begin{proof}[Proof of Theorem~\ref{thm:mainpos}]
  Let $\pi:\cA\to B(H)$ be a unital, completely contractive
  representation.  Let $\cA^*\subset C(\mathbb T)$ denote the set of
  complex conjugates of functions in $\cA$.  Then $\cA+\cA^*$ is an
  operator system and $\rho:\cA+\cA^* \to B(H)$ given by
  \begin{equation*}
    \rho(f+g^*) =\pi(f) +\pi(g)^*
  \end{equation*}
  is well defined.  Since $\pi$ is unital and $\cA\cap \cA^*=\mathbb
  C1$, $\rho$ is completely positive.  By the Arveson extension
  theorem, $\rho$ extends to a unital, completely positive (ucp) map
  $\sigma:C(\mathbb T)\to B(H)$.  By the Stinespring theorem there is
  a larger Hilbert space $K\supset H$, and a unitary $U\in B(K)$ such
  that for all $n\geq 0$,
  \begin{equation*}
    \sigma(z^n) =P_HU^n|_H.
  \end{equation*}
  Since $\pi(z^n)=\sigma(z^n)$ for all nonnegative $n\neq 1$, one
  direction follows.

  Conversely, suppose that there is a unitary operator $U \in B(K)$
  such that for all $n\geq 0$, $n\neq 1$, $\pi(z^n) = P_H U^n |_H$.
  Then $\tilde\pi$ defined as $\tilde\pi(z^n) = U^n$, $n \in \mathbb
  Z$ defines a completely contractive representation of $C(\mathbb
  T)$.  So $\tilde\pi$ restricted the operator system $\cA\cap \cA^*$
  is completely positive, as is $\rho$, its compression to $H$, by the
  Stinespring dilation theorem.  Since unital completely positive maps
  are completely contractive, $\pi=\rho | \cA$ is completely
  contractive.
\end{proof}

\begin{remark}
  In the above proof, obviously $T=P_HU|_H$ is a contraction.  However
  since the restriction of $\sigma$ to $\AD$ is not necessarily
  multiplicative, we cannot conclude that $\pi(z^2)=T^2$ and
  $\pi(z^3)=T^3$.  Indeed the following example illustrates this
  concretely:
\end{remark}

\begin{example}
  \label{eg:no-T}
  Let $K$ be a separable Hilbert space with orthonormal basis
  $\{e_j\}_{j\in\mathbb Z}$, and let $U$ be the bilateral shift.  Let
  $H \subset K$ be defined as $H = e_0 \vee \bigvee_{n=2}^\infty e_n$.
  Then $H$ is invariant for $U^2$ and $U^3$, and so by
  Theorem~\ref{thm:mainpos}, $\pi$ given by $\pi(z^n) = P_H U^n |_H
  = U^n |_H$, $n\geq 0$, $n\neq 1$, is a completely contractive
  representation of $\cA$.

  If it were the case that for some $T\in B(H)$, $T^2 = \pi(z^2)$ and
  $T^3 = \pi(z^3)$, we would require that
  \begin{equation*}
    e_3 = U^3e_0 = \pi(z^3) = \pi(z^2) Te_0.
  \end{equation*}
  However, $\ip{\pi(z^2)e_n}{e_3} = \ip{U^2 e_n}{e_3} = 0$ for $n\geq
  0$, $n\neq 1$, and hence $e_3$ is orthogonal to the range of
  $\pi(z^2)$.  Thus there is no way to define $Te_0$ so that $e_3 =
  \pi(z^2) Te_0$, and so there can be no such $T$.
\end{example}

 \begin{example}
 \label{cantuseando}
 If $\pi:\cA\to B(H)$ is a unital contractive representation, then the
 image of the generators $z^2,z^3$ of $\cA$ are evidently
 contractions, $S=\pi(z^2)$ and $T=\pi(z^3)$.  Further $S^3=T^2$.  By
 Ando's Theorem, there exists a pair of commuting unitaries $X$ and
 $Y$ on a larger Hilbert space $K$ containing $H$ such that
 \begin{equation*}
   S^n T^m = V^* X^n Y^m V,
 \end{equation*}
 where $V$ is the inclusion of $H$ into $K$.  Because $X$ and $Y$ are
 unitary and commute, $X^*Y=YX^*$ by the Putnam-Fuglede theorem.  The
 operator $U=X^*Y$ is a contraction, but unfortunately, there is no
 reason to expect that $U^2 = X$ and $U^3=Y$ or equivalently,
 $X^3=Y^2$.  In general then, it will not be the case that $V^*
 U^{2n+3m}V = S^nT^m=\pi(z^{2n+3m})$.  Indeed, Theorems
 \ref{thm:mainneg} and Theorem \ref{thm:mainpos} imply that $\pi$
 contractive is not a sufficient assumption to guarantee the existence
 of such a $U$.

 It is worth noting that the construction of $U=X^*Y$ via Ando's
 Theorem did not use the full strength of the contractive hypothesis
 on $\pi$, but rather only that $S$ and $T$ are commuting contractions
 with $S^3=T^2$.  Perhaps surprisingly, in view of Corollary
 \ref{cor:noxy} below, the representation $\pi$ of $\cA$ determined by
 $\pi(z^2)=S$ and $\pi(z^3)=T$ need not even be contractive.
\end{example}

\section{The set of test functions and its cone}
\label{sec:cone-sep}

Given $\lambda\in\mathbb D$, let \index{$\lambda$}
\begin{equation}
  \label{eq:varphi}
  \varphi_\lambda(z) = \frac{z-\lambda}{1-\lambda^* z},
\end{equation}
and let 
\begin{equation}
  \label{eq:phi}
  \psi_\lambda(z) = z^2  \varphi_\lambda(z)
\end{equation}
the (up to a unimodular constant) \df{Blaschke factor}
\index{$\psi_\lambda$} with zero at $\lambda$, times $z^2$.  It will
be convenient to let
\begin{equation*}
  \psi_\infty = z^2
\end{equation*}
and \index{$\psi_\infty$} at the same time let $\infty$ denote the
point at infinity in the one point compactification \df{$\Dinf$} of
the unit disk $\mathbb D$.  Let \index{$\Psi$}
\begin{equation*}
  \Psi = \{\psi_\lambda: \lambda \in\Dinf \},
\end{equation*}
 with the topology and Borel structure inherited from
$\Dinf$.  We refer to this as a set of \df{test functions}.  It has
the properties that it separates the points of $\mathbb D$ and for all
$z\in \mathbb D$, $\sup_{\psi\in\Psi} |\psi(z)| < 1$.

Recall that for a set $X$ and $C^*$-algebra $\mathcal A$, a function
$k: X\times X \to \mathcal A$ is called a \df{kernel}.  It is a
\df{positive kernel} if for every finite subset $\{x_1, \dots,x_n\}$
of $X$, $(k(x_i,x_j)) \in M_n(\mathcal A)$ is positive semidefinite.

Let $M(\Psi)$ be the space of finite Borel measures on the set of test
functions. Given a subset $S$ of $\mathbb D$,  denote by 
 $M^+(S) = \{\mu:S\times S\to
M(\Psi)\}$ the collection of positive kernels on $S\times S$ 
 into $M(\Psi)$.  Write $\mu_{xy}$ for the value of
$\mu$ at the pair $(x,y)$.  By $\mu$ being positive, we mean that for
all finite sets $\mathcal G\subset S$ and all Borel sets
$\omega\subset \Psi$, the matrix
\begin{equation}
(\mu_{x,y}(\omega) )_{x,y\in\mathcal G}  
\end{equation}
is positive semidefinite.  For example, if $\mu$ is identically equal
to a fixed positive measure $\nu$, or more generally is of the form
$\mu_{xy}=f(x)f(y)^*\nu$ for a fixed positive measure $\nu$ and
bounded measurable function $f:\mathbb C\to \mathbb D$, or more
generally still is a finite sum of such terms, then it is positive.

Our starting point is the following result from \cite{DP} (stated
there for functions of positive real part):

\begin{proposition}\label{prop:scalar-cone}
  An analytic function $f$ in the disk belongs to $\cA$ and satisfies
  $\|f\|_\infty \leq 1$ if and only if there is a positive kernel
  $\mu\in M^+(\mathbb D)$ such that
  \begin{equation}
    1-f(x)f(y)^* =\int_\Psi (1-\psi(x)\psi(y)^*)\, d\mu_{xy}(\psi).
  \end{equation}
  for all $x,y\in \mathbb D$.  Furthermore, $\Psi$ is minimal, in the
  sense that there is no proper closed subset of $E \subset \Psi$ such
  that for each such $f$, there exists a $\mu$ such that 
  \begin{equation}
    1-f(x)f(y)^* =\int_E (1-\psi(x)\psi(y)^*)\, d\mu_{xy}(\psi).
  \end{equation}
\end{proposition}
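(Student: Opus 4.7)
The approach follows the test-function paradigm of \cite{DP} and Dritschel--McCullough: first establish sufficiency directly, then deduce necessity from a Hahn--Banach cone-separation combined with an Agler-type Pick interpolation theorem, and conclude minimality by localizing near a missing test function.

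For \emph{sufficiency}, I would suppose the integral representation holds with $\mu\in M^+(\mathbb{D})$ and set $x=y$. Positivity of $\mu_{xx}$ then gives
\[
 1-|f(x)|^2 \;=\; \int_\Psi (1-|\psi(x)|^2)\,d\mu_{xx}(\psi)\ge 0,
\]
so $\|f\|_\infty\le 1$. To see that $f\in\cA$, I would apply a Kolmogorov decomposition to the positive measure-valued kernel $\mu$, representing $\int_\Psi g\,d\mu_{xy}$ as $\langle\rho(g)V(x),V(y)\rangle$ for a $*$-representation $\rho:C(\Psi)\to B(\mathcal{K})$ and a vector function $V:\mathbb{D}\to\mathcal{K}$. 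A lurking isometry argument on the resulting factorization $\langle V(x),V(y)\rangle-\langle \rho(\psi)V(x),\rho(\psi)V(y)\rangle=1-f(x)f(y)^*$ then yields a transfer-function realization $f(z)=A+B\Phi(z)(I-D\Phi(z))^{-1}C$, where $\Phi(z)$ is built from the evaluations $\psi\mapsto\psi(z)$. Since each $\psi\in\Psi$ vanishes to order two at $0$, so does $\Phi$, and therefore $f(z)-f(0)=O(z^2)$, i.e.\ $f'(0)=0$ and $f\in\cA$.

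The heart of the matter is \emph{necessity}. For a finite $F\subset\mathbb{D}$, let $\mathcal{C}_F$ denote the convex cone of Hermitian $F\times F$ matrices of the form $\bigl[\int_\Psi(1-\psi(x)\psi(y)^*)\,d\mu_{xy}(\psi)\bigr]_{x,y\in F}$ arising from some positive measure-valued kernel $\mu$ on $F\times F$. The goal is to show $M_F:=[1-f(x)f(y)^*]_{x,y\in F}\in\overline{\mathcal{C}_F}$, after which a weak-$*$ compactness argument over nested finite $F$'s would patch the local $\mu_F$'s into a global $\mu\in M^+(\mathbb{D})$. If the inclusion fails for some $F$, Hahn--Banach produces a Hermitian $L$ with $\langle L,K\rangle\ge 0$ for all $K\in\mathcal{C}_F$ but $\langle L,M_F\rangle<0$. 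Testing against point-mass kernels gives $\sum_{x,y\in F}L_{x,y}(1-\psi(x)\psi(y)^*)\ge 0$ for every $\psi\in\Psi$, which is precisely the hypothesis of an Agler-style Pick interpolation theorem for $\cA$ with test functions $\Psi$. That theorem forces the data $\{(x,f(x))\}_{x\in F}$ to be interpolable by some contractive element of $\cA$ --- realised by $f$ itself --- yielding $\langle L,M_F\rangle\ge 0$ and contradicting the separation.

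For \emph{minimality}, given a proper closed $E\subsetneq\Psi$ I would pick $\lambda_0\in\Dinf$ with $\psi_{\lambda_0}\notin E$ and argue that $\psi_{\lambda_0}$ itself admits no representation supported in $E$: repeating the Kolmogorov/transfer-function argument with $C(E)$ in place of $C(\Psi)$ produces a realization of $\psi_{\lambda_0}$ whose internal symbol factors through $E$, but a Pick interpolation problem at a few points chosen near $\lambda_0$ is solvable via $\Psi$ while becoming strictly tighter when restricted to $E$ --- a contradiction. The main obstacle throughout is the necessity step, and inside it the bridge from the ``for every $\psi\in\Psi$'' positivity obtained from Hahn--Banach to the Pick interpolation theorem that forces solvability by $f$; the minimality step then requires extra care near $\psi_\infty$ because of the one-point compactification structure of $\Dinf$.
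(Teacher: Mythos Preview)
The paper does not prove this proposition but quotes it from \cite{DP}, where it is established in the positive-real-part formulation. Your sufficiency sketch is fine, and the general shape of the cone-separation argument is standard in the test-function framework. The gap is in the necessity step, exactly where you flag ``the main obstacle.'' After Hahn--Banach you have produced a single $\Psi$-admissible kernel $L$, and you then invoke ``an Agler-style Pick interpolation theorem for $\cA$ with test functions $\Psi$'' to conclude $\langle L,M_F\rangle\ge 0$. But the abstract Pick theorem in this framework characterizes interpolation in the $\Psi$-\emph{Schur class} (functions possessing the very integral representation you are trying to establish), not, a priori, in the unit ball of $\cA$. The assertion that these two classes coincide --- equivalently, that any representation $\pi$ with $\|\pi(\psi)\|\le 1$ for all $\psi\in\Psi$ is automatically contractive on $\cA$ --- \emph{is} the content of the proposition, so invoking it here is circular. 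Observing that $f$ itself interpolates the data does not give $\langle L,M_F\rangle\ge 0$ unless you already know $f$ lies in the $\Psi$-Schur class.

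The argument in \cite{DP} breaks the circle by working directly on the positive-real-part side. For $f$ in the unit ball of $\cA$, the Cayley transform $h=(1+f)(1-f)^{-1}$ has positive real part and inherits the constraint at $0$; its boundary measure on $\mathbb T$ therefore lies in a cone of positive measures cut out by a single linear moment condition. The extreme rays of that cone are identified explicitly and correspond, via the inverse Cayley transform, to the family $\{\psi_\lambda:\lambda\in\Dinf\}$; a Choquet-type disintegration over these rays then produces the integral representation for $1-f(x)f(y)^*$ directly, with no appeal to a pre-existing Pick theorem. Minimality in \cite{DP} is likewise handled by explicit construction (this is the input the present paper uses to obtain Corollary~\ref{cor:noxy}), rather than the soft localization you sketch.
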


For $E\subset \Psi$ a closed subset, let $C_{1,E}$ denote the cone
consisting of the kernels
\begin{equation}
  \left(\int_E (1-\psi(x)\psi(y)^*)\, d\mu_{x,y}(\psi)
  \right)_{x,y\in\mathbb D}.
\end{equation}
(Equivalently, we could consider only those $\mu$ such that $\mu_{xy}$
is supported in $E$ for all $x,y$.)  In particular, if we choose $E =
\{z^2,z^3\}$, it follows from \cite[Theorem 3.8]{DP} that there exists
a function $f\in \cA$ with $\|f\|_\infty \leq 1$ such that
$1-f(x)f(y)^*\notin C_{1,E}$.  This yields in our context an analogue
of the Kaiser and Varopoulos example for the tridisk:

\begin{corollary}
 \label{cor:noxy}
  There exists a pair of commuting contractive matrices $X,Y$ with
  $X^3=Y^2$, but such that the representation of $\cA$ determined by
  $\pi(z^2)=X$, $\pi(z^3)=Y$ is not contractive.
\end{corollary}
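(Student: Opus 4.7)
The plan is to deduce the corollary from the failure of the cone $C_{1,E}$ (with $E=\{\psi_0,\psi_\infty\}=\{z^3,z^2\}$) to capture the unit ball of $\cA$, via a cone-separation argument followed by a GNS-type construction.

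First, by the minimality clause in Proposition \ref{prop:scalar-cone} together with \cite[Theorem 3.8]{DP}, choose $f_0\in\cA$ with $\|f_0\|_\infty\le 1$ such that $K(x,y):=1-f_0(x)\overline{f_0(y)}$ does not lie in $C_{1,E}$. A routine continuity/compactness argument yields a finite set $\{x_1,\dots,x_N\}\subset\mathbb D$ on which $[K(x_i,x_j)]$ fails to have the form $A\circ[1-x_i^2\bar x_j^2]+B\circ[1-x_i^3\bar x_j^3]$ for any $A,B\in M_N(\mathbb C)_+$. Hahn-Banach in the real vector space of Hermitian $N\times N$ matrices then supplies a Hermitian $P$ with $\operatorname{tr}(P[K(x_i,x_j)])<0$ while $\operatorname{tr}(PK')\ge 0$ for every $K'$ in the finite cone.

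Using the identity $\operatorname{tr}(P(A\circ M))=\operatorname{tr}((P\circ M^T)A)$ together with self-duality of the PSD cone, the latter condition is equivalent to
\begin{equation*}
  P\succeq D_2^*PD_2 \quad\text{and}\quad P\succeq D_3^*PD_3,
\end{equation*}
where $D_j:=\operatorname{diag}(x_1^j,\dots,x_N^j)$. Iterating these inequalities and using $\|D_j\|<1$ forces $P\succeq 0$, and the same inequalities ensure that $\ker P$ is invariant under $D_2$ and $D_3$. Equip $\mathbb C^N$ with the pre-inner product $\langle u,v\rangle_P:=v^*Pu$ and quotient by $\ker P$ to form a finite-dimensional Hilbert space $H_P$. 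The commuting diagonal matrices $D_2, D_3$, which satisfy $D_2^3=D_3^2$ pointwise, then descend to commuting contractions $X,Y$ on $H_P$ with $X^3=Y^2$.

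Define $\pi$ on the dense subalgebra $\mathbb C[z^2,z^3]\subset\cA$ by $\pi(z^2)=X$, $\pi(z^3)=Y$; thus $\pi(p)=\operatorname{diag}(p(x_i))$ for any such polynomial $p$. The separation inequality unpacks to
\begin{equation*}
  \operatorname{tr}(P[K(x_i,x_j)]) = \|\mathbf 1\|_P^2 - \|D_{f_0}\mathbf 1\|_P^2 < 0
\end{equation*}
with $D_{f_0}:=\operatorname{diag}(f_0(x_i))$, so the operator norm of $D_{f_0}$ on $H_P$ strictly exceeds $\|f_0\|_\infty\le 1$. Approximating $f_0$ uniformly by polynomials $p_n\in\mathbb C[z^2,z^3]$, the operators $\pi(p_n)$ converge to $D_{f_0}$ in operator norm on the finite-dimensional $H_P$, while $\|p_n\|_\infty\to\|f_0\|_\infty$, so for $n$ large $\|\pi(p_n)\|>\|p_n\|_\infty$; hence $\pi$ is not contractive. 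The main obstacle is the double reduction — from infinite-dimensional cone non-membership to a finite-dimensional one amenable to Hahn-Banach, and from a merely Hermitian separating matrix to an actually PSD one (accomplished by iterating $P\succeq D_j^*PD_j$ using $\|D_j\|<1$); both are exactly the mechanics promised in Section~\ref{sec:cone-sep}.
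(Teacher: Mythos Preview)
Your proof is correct and follows essentially the same route as the paper's: both use the fact (from \cite[Theorem~3.8]{DP}) that $1-f_0(x)\overline{f_0(y)}\notin C_{1,E}$ for some contractive $f_0\in\cA$, then separate by a linear functional and run a GNS construction on the resulting positive form, exactly the mechanics of Proposition~\ref{prop:counterifsep}. Your write-up is more explicit in two places---the finite-set reduction and the iteration $P\succeq (D_j^*)^nPD_j^n\to 0$ to force $P\succeq 0$---whereas the paper instead observes that squares lie in the cone (via the Szeg\H{o}-type kernel $(1-x^3\bar y^3)^{-1}$), which immediately makes the separating functional nonnegative on rank-one positives; both justifications are valid and standard.
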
 

\begin{proof}
  By a cone separation argument as in the proof of
  Proposition~\ref{prop:counterifsep}, there is a bounded
  representation $\pi$ of $\cA$ (determined by a pair of matrices
  $X,Y$ with spectrum in $\mathbb D$) such that $\|\pi(\psi)\| \leq 1$
  for each $\psi\in E$ but $\|\pi(f)\| > 1$.  In particular, if we
  take $E$ to be the closed set $\{z^2, z^3\}$, we see that
  $X=\pi(z^2)$ and $Y=\pi(z^3)$ satisfy the conditions of the
  corollary.
\end{proof}

\subsection{The matrix cone}
 \label{sec:matcone}
 To study the action of representations on $M_2(\cA)$, consider a finite
subset $\fsg\subset \mathbb D.$ \index{$\fsg$}
As usual, $M_2(\mathbb C)$ stands for the
$2\times 2$ matrices with entries from $\mathbb C$.  Let $\cX_{2,\fsg}$
denote the set of all kernels $G:\fsg\times\fsg\to M_2(\mathbb C)$ and
$\cL_{2,\fsg}\subset \cX_{\fsg}$ denote the selfadjoint kernels
$F:\fsg\times \fsg \mapsto M_2(\mathbb C)$, in the sense that $F(x,y)^*
= F(y,x)$.  Finally, write \df{$C_{2,\fsg}$} for the cone in
$\cL_{2,\fsg}$ of elements of the form
\begin{equation}
 \label{eq:inCfs}
  \begin{pmatrix} \int_{\Psi} (1-\psi(x)\psi(y)^*)\, d\mu_{x,y}(\psi)
  \end{pmatrix}_{x,y\in \fsg}
\end{equation}
where $\mu = (\mu_{x,y}) \in M_2^+(\fsg)$ is a kernel taking its values
$\mu_{x,y}$ in the $2\times 2$ matrix valued measure on $\Psi$ such
that the measure
\begin{equation}
 \label{eq:inCfsM}
  M(\omega) = \begin{pmatrix} \mu_{x,y}(\omega) \end{pmatrix}_{x,y}
\end{equation}
takes positive semidefinite values (in $M_\fn(M_2(\mathbb C))$).  Given
$f:\fsg \to \mathbb C^2$, the kernel $(f(x)f(y)^*)_{x,y\in \fsg}$ is
called a \df{square}.

\begin{lemma}
  The cone $C_{2,\fsg}$ is closed and contains all squares. 
\end{lemma}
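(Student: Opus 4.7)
The plan is to handle the two assertions independently. Containment of squares reduces to an explicit construction that concentrates mass at a single test function and uses positivity of a Schur-type product; closedness reduces to a weak-$*$ compactness argument for the representing matrix-valued measures, made possible by the finiteness of $\fsg$ and the uniform strict bound $|\psi(x)|<1$ on $\Psi\times\fsg$. The main obstacle is the weak-$*$ step, where both the total-variation control and positivity must be shown to pass to the limit.

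For the squares: given $f:\fsg\to\mathbb C^2$, I would fix any single test function $\psi_0\in\Psi$ (for concreteness, $\psi_0=z^2$) and set $d\mu_{x,y}(\psi)=a_{x,y}\,d\delta_{\psi_0}(\psi)$ with
\begin{equation*}
  a_{x,y}:=\frac{f(x)f(y)^*}{1-\psi_0(x)\psi_0(y)^*}\in M_2(\mathbb C);
\end{equation*}
the denominator never vanishes because $|\psi_0(x)|<1$ on $\mathbb D$. Writing $n=|\fsg|$, the matrix $(a_{x,y})_{x,y\in\fsg}\in M_n(M_2(\mathbb C))$ is the entrywise product of the rank-one positive matrix kernel $(f(x)f(y)^*)$ and the scalar positive Szeg\H{o}-type kernel $((1-\psi_0(x)\psi_0(y)^*)^{-1})$; a direct quadratic-form calculation (equivalently, positivity of the tensor product of positive kernels) shows that this combined kernel is positive. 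Hence $\mu\in M_2^+(\fsg)$, and substitution into \eqref{eq:inCfs} gives $a_{x,y}(1-\psi_0(x)\psi_0(y)^*)=f(x)f(y)^*$.

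For closedness: suppose $K_m\in C_{2,\fsg}$ converges in $\cL_{2,\fsg}$ to some $K$, and let $\mu^{(m)}=(\mu^{(m)}_{x,y})$ be positive measure kernels representing $K_m$ as in \eqref{eq:inCfsM}. Since $\fsg$ is finite and $\Psi\cong\Dinf$ is compact, the key observation is that $c:=\max_{x\in\fsg}\sup_{\psi\in\Psi}|\psi(x)|^2$ is attained and strictly less than $1$. Taking the $M_2$-trace of $K_m(x,x)$ and summing over $\fsg$ yields
\begin{equation*}
  \sum_{x\in\fsg}\mathrm{tr}\,K_m(x,x)=\sum_{x\in\fsg}\int_\Psi (1-|\psi(x)|^2)\,d(\mathrm{tr}\,\mu^{(m)}_{x,x})(\psi)\geq (1-c)\sum_{x\in\fsg}\mathrm{tr}\,\mu^{(m)}_{x,x}(\Psi),
\end{equation*}
so the nonnegative scalar control measures $\mathrm{tr}\,\mu^{(m)}_{x,x}$ have uniformly bounded total mass. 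Positivity of the matrix measure $M^{(m)}$ then bounds every entry of $\mu^{(m)}_{x,y}$ in total variation by the diagonal traces (matrix-measure Cauchy--Schwarz), so by Banach--Alaoglu applied entrywise one extracts a subsequence with $\mu^{(m)}\to\mu$ weak-$*$ in $C(\Psi)^*$ for every entry. Positivity passes to the weak-$*$ limit, so $\mu\in M_2^+(\fsg)$, and pairing weak-$*$ convergence with the continuous functions $\psi\mapsto 1-\psi(x)\psi(y)^*$ gives $K_m\to K(\mu)$; hence $K=K(\mu)\in C_{2,\fsg}$, completing the proof.
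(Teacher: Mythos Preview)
Your proof is correct and follows essentially the same approach as the paper: for squares you concentrate mass at a single test function (the paper uses $\psi_0=z^3$ rather than $z^2$, an irrelevant choice) and invoke positivity of the Schur product; for closedness you use that $\sup_{\psi,x}|\psi(x)|<1$ on the finite set $\fsg$ to control the total mass of the diagonal measures, then pass to a weak-$*$ limit. The paper obtains the diagonal bound as a matrix inequality $\kappa^{-1}\Gamma(x,x)\succeq\mu_{x,x}(\Psi)$ rather than via traces, but this is only a cosmetic difference. One implicit point worth making explicit in both arguments: while $\psi\mapsto\psi(x)$ is \emph{not} continuous at $\psi_\infty$ in the $\Dinf$ topology, the product $\psi\mapsto\psi(x)\psi(y)^*$ \emph{is} continuous there (the unimodular phases cancel as $|\lambda|\to 1$), which is what actually justifies passing the integral through the weak-$*$ limit.
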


\begin{proof}
  For $x \in\fsg$,
  \begin{equation*}
    \sup_{\psi\in\Psi} | \psi(x)| < |x|.
  \end{equation*}
  Hence as $\fsg$ is finite, there exists a there exists $0<\kappa
  \leq 1 $ such that for all $x\in\fsg$ and $\psi \in\Psi$
  \begin{equation*}
    1-\psi(x)\psi(x)^* \ge \kappa.
  \end{equation*}
  Consequently, if $\Gamma$ defined by
  \begin{equation*}
    \Gamma(x,y) = \int_\Psi (1-\psi(x)\psi(y)^*)\, d\mu_{x,y}(\psi) 
  \end{equation*}
  is in $C_{2,\fsg}$, then 
  \begin{equation*}
    \frac{1}{\kappa}  \Gamma(x,x) \succeq \mu_{x,x}(\Psi),
  \end{equation*}
  where the inequality is in the sense of the positive semidefinite
  order on $2\times 2$ matrices.

  Now suppose $(\Gamma_n)$ is a sequence from $C_{2,\fsg}$ converging
  to some $\Gamma$.  For each $n$ there is a measure $\mu^n$ such that
  $\Gamma_n$ given by
  \begin{equation*}
    \Gamma_n(x,y)  = \int_\Psi (1-\psi(x)\psi(y)^*)\,
    d\mu^n_{x,y}(\psi)
  \end{equation*} 
  forms a sequence from $C_{2,\fsg}$ which converges to some $\Gamma$.
  Hence there exists a $\tilde\kappa > 0$ such that for all $n$ and
  all $x\in\fsg$, $\tilde\kappa \geq \Gamma_n(x,x)$.  Consequently, for
  all $n$ and all $x\in\fsg$,
  \begin{equation*}
    \tfrac{\tilde\kappa}{\kappa} I \succeq \mu^n_{x,x}.
  \end{equation*}
  By positivity of the $\mu^n$s, it now follows that the measures
  $\mu^n_{x,y}$ are uniformly bounded.  Hence there exists a
  subsequence $\mu^{n_j}$ and a measure $\mu$ such that $\mu^{n_j}$
  converges weak-$*$ to $\mu$, which therefore is positive.  We
  conclude that
  \begin{equation*}
    \Gamma = \int_\Psi (1-\psi(x)\psi(y)^*)\,d \mu_{x,y}(\psi) \in
    C_{2,\fsg},
  \end{equation*}
  establishing the fact that $C_{2,\fsg}$ is closed.
  
  Now let $f:\fsg\to\mathbb C^2$ be given.  Let $\delta$ denote the
  unit scalar point mass \index{$\delta$} \index{point mass} at
  $z^3$ ($\lambda=0$).  Then for $\omega\subset\Psi$ a Borel subset,
  \begin{equation*}
    \mu_{x,y}(\omega) = f(x)\frac{1}{1-x^3 y^{*3}} \delta(\omega)
    f(y)^*
  \end{equation*}
  defines a positive measure and
  \begin{equation*}
    \int_\Psi (1- \psi(x)\psi(y)^*) \, d\mu_{x,y}(\psi)  = f(x)
    f(y)^*,
  \end{equation*}
  showing that $C_{2,\fsg}$ contains the squares.
\end{proof}

Elaborating on the construction at the end of the last proof, if
\begin{equation*}
  \nu(\omega) = \begin{pmatrix} \nu_{x,y}(\omega) \end{pmatrix}_{x,y
    \in \fsg}
\end{equation*}
is positive semidefinite for every Borel subset $\omega$ of $\Psi$,
each $\nu_{xy}$ a scalar valued measure, and if $f:\fsg \to \mathbb
C^2$, then
\begin{equation*}
 \mu_{x,y}(\omega) = f(x) \nu_{x,y}(\omega) f(y)^*,
\end{equation*}
defines an $M_2(\mathbb C)$ valued positive measure $\mu$ and
\begin{equation*}
  \int_{\Psi} (1-\psi(x)\psi(y)^*) \, d\mu_{x,y}(\psi) \in C_{2,\fsg}.
\end{equation*}
We therefore have the following from \cite{DP} (see also \cite{BBtH}).

\begin{proposition}
 \label{prop:mad+}
  If $g\in \cA$ is analytic in a neighborhood
  of the closure of the disk and if $\|g\|_\infty \le 1$, then 
  $1-g(x)g(y)^* \in C_{2,\fsg}(1)$.  Thus, if $f:\fsg\to \mathbb C^2$,
 then 
\begin{equation*}
  f(x)(1-g(x)g(y)^*)f(y)^* \in C_{2,\fsg}.
\end{equation*}
\end{proposition}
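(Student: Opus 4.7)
The plan is to apply Proposition \ref{prop:scalar-cone} to extract a scalar Agler-type decomposition of $1-g(x)g(y)^*$, then promote it to the $2\times 2$ matrix cone $C_{2,\fsg}$ by sandwiching with $f$. This is essentially the construction already foreshadowed in the paragraph immediately preceding the statement; my job is to carry it out cleanly for the particular choice $\nu_{x,y} = \mu_{x,y}$ coming from $g$.

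First I would invoke Proposition \ref{prop:scalar-cone}: since $g\in\cA$ with $\|g\|_\infty\le 1$, there is a positive kernel $\mu\in M^+(\mathbb D)$ (in the scalar sense, i.e.\ $(\mu_{x,y}(\omega))_{x,y\in\fsg}\succeq 0$ for every Borel $\omega\subset\Psi$) such that
\begin{equation*}
  1-g(x)g(y)^* = \int_\Psi (1-\psi(x)\psi(y)^*)\, d\mu_{xy}(\psi)
\end{equation*}
for all $x,y\in\mathbb D$; restricting to $x,y\in\fsg$ gives the first assertion of the proposition, with $C_{2,\fsg}(1)$ interpreted as the canonical scalar subcone of $C_{2,\fsg}$ (i.e.\ scalar representing measures embedded as a $(1,1)$-entry of an $M_2$-valued measure; no loss if we just work directly with the scalar identity).

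Next I would upgrade to $M_2(\mathbb C)$ by defining, for each Borel $\omega\subset\Psi$,
\begin{equation*}
  \tilde\mu_{x,y}(\omega) := f(x)\,\mu_{x,y}(\omega)\,f(y)^* \in M_2(\mathbb C).
\end{equation*}
To check $\tilde\mu\in M_2^+(\fsg)$, fix $\omega$ and write the block matrix $(\tilde\mu_{x,y}(\omega))_{x,y\in\fsg}$ as $F\,M(\omega)\,F^*$, where $M(\omega) = (\mu_{x,y}(\omega))_{x,y\in\fsg}$ is positive semidefinite by hypothesis on $\mu$, and $F$ is the $2|\fsg|\times|\fsg|$ block diagonal matrix with the column vectors $f(x)$ on the diagonal. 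Thus $(\tilde\mu_{x,y}(\omega))\succeq 0$, and $\tilde\mu$ is an admissible kernel of $M_2$-valued measures as required in \eqref{eq:inCfsM}.

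Finally I would substitute $\tilde\mu$ into \eqref{eq:inCfs} and pull $f(x)$ and $f(y)^*$ (which do not depend on $\psi$) outside the integral:
\begin{equation*}
  \int_\Psi (1-\psi(x)\psi(y)^*)\, d\tilde\mu_{x,y}(\psi) = f(x)\!\left[\int_\Psi (1-\psi(x)\psi(y)^*)\, d\mu_{x,y}(\psi)\right]\!f(y)^* = f(x)(1-g(x)g(y)^*)f(y)^*,
\end{equation*}
which exhibits the target kernel as an element of $C_{2,\fsg}$. There is no real obstacle: the analytic content sits entirely in Proposition \ref{prop:scalar-cone}, and the only thing to verify is the $FMF^*$ factorization that produces a positive $M_2$-valued representing measure from a positive scalar one.
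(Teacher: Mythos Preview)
Your argument is correct and is exactly the approach the paper intends: it cites \cite{DP} for the scalar decomposition (Proposition~\ref{prop:scalar-cone}) and, in the paragraph immediately preceding the statement, spells out precisely the sandwiching construction $\mu_{x,y}(\omega)=f(x)\nu_{x,y}(\omega)f(y)^*$ that you carry out. Your verification of positivity via the $FMF^*$ factorization is the only detail not explicit in the paper, and it is handled correctly.
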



\subsection{The cone separation argument}
Continue to let $\fsg$ denote a finite subset of $\mathbb D.$
Given $F\in M_2(\cA)$, let $\Sigma_{F,\fsg}$ \index{$\Sigma_{F,\fsg}$}
denote the kernel
\begin{equation}
 \label{eq:SigmaF}
 \Sigma_{F,\fsg} = (1-F(x)F(y)^*)_{x,y\in\fsg}. 
\end{equation}
Let \df{$I$} denote the ideal of functions in $\cA$ which vanish on
$\fsg$.  Write $q:\cA\to \cA/I$ for the canonical projection, which is
completely contractive.  We use the standard notation \df{$\sigma(T)$}
for the spectrum of an operator $T$ on Hilbert space, as well as $F^t$
for the transpose of the matrix function $F$.  Thus, $F^t(z)=F(z)^t$.
Obviously, when $F \in M_2(\cA)$, $F^t$ is as well, and $\|F\|_\infty
= \|F^t\|_\infty$.

\begin{proposition}
 \label{prop:counterifsep}
 If $F\in M_2(\cA)$, but $\Sigma_{F,\fsg}\notin C_{2,\fsg}$, then
 there exists a a Hilbert space $H$ and representation $\tau:\cA/I \to
 B(H)$ such that
 \begin{enumerate}[(i)]
 \item  $\sigma(\tau(a))\subset a(\fsg)$ for $a\in\cA;$ 
 \item $\| \tau(q(a)) \| \le 1$ for all $a\in \cA$ with $\|a\| \leq
   1$; but
 \item $\|\tau^{(2)}(q(F^t)) \| > 1$.
 \end{enumerate}
 Therefore if $\|F\| \le 1$, then the representation $\tau\circ q$ is
 contractive, but not completely contractive.
\end{proposition}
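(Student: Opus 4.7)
The plan is a Hahn--Banach cone separation argument followed by a GNS-style construction. Since $C_{2,\fsg}$ is a closed convex cone in the finite-dimensional real vector space $\cL_{2,\fsg}$ and by hypothesis $\Sigma_{F,\fsg}$ lies outside $C_{2,\fsg}$, Hahn--Banach separation produces a real-linear functional $L$ on $\cL_{2,\fsg}$ with $L \geq 0$ on $C_{2,\fsg}$ and $L(\Sigma_{F,\fsg}) < 0$. Using the trace pairing, $L$ is represented by a self-adjoint kernel $P : \fsg \times \fsg \to M_2(\mathbb{C})$; because every square $(f(x)f(y)^*)_{x,y}$ lies in $C_{2,\fsg}$ by the preceding lemma, $L \geq 0$ on squares forces $P$ itself to be a positive semidefinite kernel.

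From $P$ I would build a finite-dimensional Hilbert space $H$ in the standard GNS manner: take $V = \bigoplus_{x \in \fsg} \mathbb{C}^2$, equip it with the pre-inner product induced by $P$, and quotient by its null vectors. For $a \in \cA$, define $\tau(q(a)) \in B(H)$ as the operator multiplying the $\mathbb{C}^2$-component at each $x \in \fsg$ by the scalar $a(x)$. This map manifestly factors through $\cA/I$, is a unital algebra homomorphism, and by its coordinatewise scalar description has $\sigma(\tau(q(a))) \subset a(\fsg)$, giving (i) as soon as boundedness is in hand (which simultaneously confirms that the multiplication passes to the GNS quotient).

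To prove (ii), one unwinds the definitions to see that $\tau(q(a))\tau(q(a))^* \preceq I$ on $H$ is equivalent to $L(K) \geq 0$ for every $K$ of the form $(1 - a(x)a(y)^*)f(x)f(y)^*$ with $f : \fsg \to \mathbb{C}^2$. If $a$ is analytic in a neighborhood of $\oD$ and $\|a\|_\infty \leq 1$, Proposition \ref{prop:mad+} places all such $K$ in $C_{2,\fsg}$, so the inequality follows from $L \geq 0$ on $C_{2,\fsg}$; a density argument then extends contractivity to arbitrary $a \in \cA$ with $\|a\|_\infty \leq 1$. For (iii), the same trace-pairing computation, applied now to the kernel $\Sigma_{F,\fsg}$ itself, translates the strict inequality $L(\Sigma_{F,\fsg}) < 0$ into the failure of $I - \tau^{(2)}(q(F^t))\tau^{(2)}(q(F^t))^* \succeq 0$ on $H \oplus H$, whence $\|\tau^{(2)}(q(F^t))\| > 1$. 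The transpose $F^t$ arises because the trace pairing between the $M_2(\mathbb{C})$-valued kernel $P$ and the $M_2(\mathbb{C})$-valued kernel $1 - F(x)F(y)^*$ matches the action of the ampliation $\tau^{(2)}$ on $H \oplus H$ only after transposing the matrix indices on $F$.

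The main obstacle is this final trace-pairing translation: one must simultaneously unpack the GNS inner product, the action of the ampliation $\tau^{(2)}$ on $H \oplus H$, and the $M_2(\mathbb{C})$-valued structure of $P$, in order to identify the kernel whose failure of positivity controls the $2$-contractivity and to explain the appearance of the transpose (a purely bookkeeping phenomenon arising from the mismatch between matrix multiplication order and the cyclic symmetry of the trace). A secondary technical point is the approximation needed to bootstrap Proposition \ref{prop:mad+} from functions analytic in a neighborhood of $\oD$ to general members of $\cA$ with sup norm at most one.
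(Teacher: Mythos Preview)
Your proposal is correct and follows essentially the same route as the paper: Hahn--Banach separation of $\Sigma_{F,\fsg}$ from the closed cone $C_{2,\fsg}$, followed by a GNS construction on $(\mathbb{C}^2)^\fsg$ with the pre-inner product coming from the separating functional, contractivity via Proposition~\ref{prop:mad+}, and an explicit identification of $\Lambda(\Sigma_{F,\fsg})$ with a quadratic form involving $\tau^{(2)}(F^t)$ on a specific vector in $H\oplus H$. The paper works directly with the functional $\Lambda$ rather than representing it by a kernel $P$ via trace pairing, and for (iii) carries out the computation concretely on the vector $[e_1]\oplus[e_2]$ (where $[e_j]$ is the constant function $x\mapsto e_j$), but these are cosmetic differences; your explanation of the transpose and your mention of the density step for general $a\in\cA$ are in fact slightly more explicit than the paper's treatment.
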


\begin{proof}
  The proof proceeds by a cone separation argument: the representation
  is obtained by applying the GNS construction to a linear functional
  that separates $\Sigma_{F,\fsg}$ from $C_{2,\fsg}$.

  The cone $C_{2,\fsg}$ is closed and by assumption $\Sigma_{F,\fsg}$
  is not in the cone.  Hence there is an $\mathbb R$-linear functional
  $\Lambda : \cL_\fsg \to \mathbb R$ such that $\Lambda(C_{2,\fsg})
  \ge 0$, but $\Lambda (\Sigma_{F,\fsg})<0$.  Given $f:\fsg\to \mathbb
  C^2$ (that is, $f \in (\mathbb C^2)^\fsg$), recall that the square
  $ff^*:=(f(x)f(y)^*)_{x,y\in\fsg}$ is in the cone and hence
  $\Lambda(ff^*)\ge 0$.  Since every element of $\cX_\fsg$ can be
  expressed uniquely in the form $G=U+iV$ where $U,V\in \cL_\fsg$,
  there is a unique extension of $\Lambda$ to a $\mathbb C$-linear
  functional $\Lambda :\cX_\fsg\to \mathbb C$.  With this extended
  $\Lambda$, let $H$ denote the Hilbert space obtained by giving
  $(\mathbb C^2)^\fsg$ the (pre)-inner product
  \begin{equation*}
    \langle f,g\rangle = \Lambda(fg^*)
  \end{equation*}
  and passing to the quotient by the space of null vectors (those $f$
  for which $\Lambda(ff^*)=0$ --- since $\fsg$ is finite, the quotient
  will be complete).

  Define a representation $\rho$ of $\cA$ on $H$ by
  \begin{equation*}
    \rho(g) f(x) =g(x)f(x),
  \end{equation*}
  where the scalar valued $g$ multiplies the vector valued $f$
  entrywise.

  If $g\in \cA,$ is analytic in a neighborhood of the closure of the
  disk and $\|g\|_\infty \le 1$, then, by Proposition \ref{prop:mad+},
  $f(x)(1-g(x)g(y)^*)f(y) \in C_{2,\fsg}$.  Thus,
  \begin{equation}
    \label{eqn:rho-contractive}
    \ip{f}{f} - \ip{\rho(g)f}{\rho(g) f}
    = \Lambda\left(( f(x)(1-g(x)g(y)^*)f(y)^*)_{x,y\in\fsg}\right)
    \ge 0.
  \end{equation}
  Hence, if $\|g\|_\infty \le 1$, then $\|\rho(g)\| \le 1$ and $\rho$
  is a contractive representation of $\cA$.  Moreover, since the
  definition of $\rho$ depends only on the values of $g$ on $\mathcal
  F$, it passes to a contractive representation $\tau:\cA/I\to B(H)$.
  The restriction of $\cA$ to $\mathcal F$ separates points of
  $\mathcal F$ (indeed, the elements of $\Psi$ do so), and so it
  follows that for each $a\in\cA$ the eigenvalues of the matrix
  representing $\tau(a)$ constitute the set $a(\mathcal F)$.  This
  proves (\textit{i}) and (\textit{ii}).

  To prove (\textit{iii}), let $\{e_1,e_2\}$ denote the standard
  basis for $\mathbb C^2$ and let $[e_j]:\fsg \to \mathbb C^2$ be the
  constant function $[e_j](x) = e_j$.  Note that
  $\{e_ie_j^*\}_{i,j=1}^2$ are a system of $2\times 2$ matrix units.
  We find
  \begin{equation*}
    \rho^{(2)}(F^t) ([e_1]\oplus [e_2])
   =  \begin{pmatrix} F_{1,1}e_1 + F_{2,1}e_2 \\ F_{1,2}e_1 +
     F_{2,2}e_2\end{pmatrix}.
 \end{equation*}
 Since
 \begin{equation*}
   \begin{split}
     (F_{1,1}e_1 + F_{2,1}e_2)(F_{1,1}e_1 + F_{2,1}e_2)^* &=
     F_{1,1}F_{1,1}^*e_1e_1^* + F_{2,1}F_{1,1}^*e_2e_1^* +
     F_{1,1}F_{2,1}^*e_1e_2^* + F_{2,1}F_{2,1}^* e_2e_2^* \\
     & =
     \begin{pmatrix}
       F_{1,1}F_{1,1}^* & F_{1,1}F_{2,1}^* \\
       F_{2,1}F_{1,1}^* & F_{2,1}F_{2,1}^*
     \end{pmatrix},
\end{split}
\end{equation*}
 and 
\begin{equation*}
 \begin{split}
     (F_{1,2}e_1 + F_{2,2}e_2)(F_{1,2}e_1 + F_{2,2}e_2)^* &=
     F_{1,2}F_{1,2}^*e_1e_1^* + F_{2,2}F_{1,2}^*e_2e_1^* +
     F_{1,2}F_{2,2}^*e_1e_2^* + F_{2,2}F_{2,2}^* e_2e_2^* \\
     & =
     \begin{pmatrix}
       F_{1,1}F_{1,1}^* & F_{1,1}F_{2,1}^* \\
       F_{2,1}F_{1,1}^* & F_{2,1}F_{2,1}^*
     \end{pmatrix},
   \end{split}
 \end{equation*}
 it follows that 
 \begin{equation*}
   \begin{split}
     \ip{\rho^{(2)}(F^t) ([e_1]\oplus [e_2])}{\rho^{(2)}(F^t)
       ([e_1]\oplus [e_2])} &= \Lambda \left(
       \begin{pmatrix}
         F_{1,1}F_{1,1}^* + F_{1,2}F_{1,2}^* && F_{1,1}F_{2,1}^* +
         F_{1,2}F_{2,2}^* \\ 
         F_{2,1}F_{1,1}^* + F_{2,2}F_{1,2}^* && F_{2,1}F_{2,1}^* +
         F_{2,2}F_{2,2}^*
       \end{pmatrix}
     \right) \\
     &= \Lambda(F F^*),
   \end{split}
 \end{equation*}
 and so
 \begin{equation*}
   \ip{(I-\rho^{(2)}(F^t)^*\rho^{(2)}(F^t)) [e_1]\oplus
     [e_2]}{[e_1]\oplus [e_2]} < 0.
 \end{equation*}
 We conclude that $\|\rho(F^t)\| > 1$, and in particular, if it
 happens to be the case that $\|F\|_\infty \leq 1$, then $\rho$ is not
 $2$-contractive, and thus not completely contractive.
\end{proof}

\begin{remark}
  \label{rem:SigmaFandT}
  Though it is not needed in what follows, observe that the converse
  of the first part of Proposition \ref{prop:counterifsep} is true: If
  $T$ is an operator on Hilbert space with spectrum in $\fsg$, if
  $\Sigma_{F,\fsg} \in C_{2,\fsg}$ and if $\psi(T)$ is contractive for
  all $\psi \in \Psi$, then $F(T)$ is also contractive.

  A proof follows along now standard lines (see, for instance,
  \cite{DM}, where the needed theorems are proved for scalar valued
  functions, though the proofs remain valid in the matrix case).  The
  assumption that $\Sigma_{F,\fsg} \in C_{2,\fsg}$ means that $F$ has a
  $\Psi$-unitary colligation transfer function representation.  Since
  the operator $T$ has spectrum in the finite set $\fsg$, it determines
  a representation of $\cA$ which sends bounded pointwise convergent
  sequences in $M_2(\cA)$ to weak operator topology convergent
  sequences.  Representations of $M_2(\cA)$ with this property and for
  which $\psi(T)$ is contractive for all $\psi \in \Psi$, are
  contractive.
\end{remark}

\section{Construction of the counterexample preliminaries}
 \label{sec:counter}

For $\lambda\in\mathbb D\backslash \{0\}$, let
\begin{equation*}
 \varphi_\lambda = \frac{z-\lambda}{1-\lambda^*z}.
\end{equation*}
Fix distinct points $\lambda_1,\lambda_2\in\mathbb D.$  As a
shorthand notation, write $\varphi_j=\varphi_{\lambda_j}$.
\index{$\Phi$} Set
\begin{equation}
  \label{eq:Phi}
  \Phi = \frac{1}{\sqrt{2}}
  \begin{pmatrix} \varphi_1 & 0 \\ 0 & 1 \end{pmatrix} 
  U
  \begin{pmatrix} 1 & 0 \\ 0 & \varphi_2\end{pmatrix},
\end{equation}
where $U$ is a $2\times 2$ unitary matrix with no non-zero entries.
To be concrete, choose
\begin{equation*}
  U = \begin{pmatrix} 1 & 1 \\ 1 & -1 \end{pmatrix}.
\end{equation*}
In particular $\Phi$ is a $2\times 2$ matrix inner function with $\det
\Phi(\lambda)=0$ at precisely the two nonzero points $\lambda_1$ and
$\lambda_2$.  The function
\begin{equation}
 \label{eq:bF}
  F = z^2\Phi
\end{equation}
is in $M_2(\cA)$ and is a rational inner function, so $\|F\|_\infty =
1$.

Ultimately we will identify a finite set $\fs$ and show that
$\Sigma_{F,\fs} \not\in \cC_{2,\fs}$ and thus, in view of
Proposition~\ref{prop:counterifsep} establish Theorem
\ref{thm:mainneg}.  In the remainder of this section we collect some
needed preliminary lemmas.

\begin{lemma}
  \label{lem:badPhi}
  Given distinct points $\lambda_1,\lambda_2 \in \mathbb
  D\backslash\{0\}$ and a $2\times 2$ unitary matrix $U$, let
  \begin{equation}
    \label{eq:badPhi}
    \Theta = \begin{pmatrix} \varphi_1 & 0 \\ 0 & 1 \end{pmatrix} U
    \begin{pmatrix} 1 & 0 \\ 0 & \varphi_2 \end{pmatrix},
  \end{equation}
  where $\varphi_j = \varphi_{\lambda_j}$.  The matrix $U$ is diagonal;
  that is, there exist unimodular constants $s$ and $t$ such that
  \begin{equation*}
    \Theta = \begin{pmatrix} s \varphi_1 & 0 \\ 0 & t\varphi_2
    \end{pmatrix},
  \end{equation*}
  if and only if there exists points $a,b\in\mathbb D$ and $2\times 2$
  unitaries $V$ and $W$ such that
  \begin{equation*}
    \Theta = V^* \begin{pmatrix} \varphi_a & 0 \\ 0 & \varphi_b
    \end{pmatrix} W.
  \end{equation*}
\end{lemma}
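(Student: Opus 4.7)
The plan is to establish the easy direction by direct substitution and the converse by a kernel-orthogonality argument. First, if $U = \mathrm{diag}(s, t)$ with $s, t$ unimodular, then \eqref{eq:badPhi} collapses to $\Theta = \mathrm{diag}(s\varphi_1, t\varphi_2)$, which is of the desired form with $a = \lambda_1$, $b = \lambda_2$, $V = I$, and $W = \mathrm{diag}(s, t)$.

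For the converse, I would begin by comparing determinants: from \eqref{eq:badPhi} and from the assumed factorization, $\det\Theta$ is a unimodular scalar times $\varphi_1\varphi_2$ on one side and a unimodular scalar times $\varphi_a\varphi_b$ on the other. Since both are degree-two Blaschke products and $\lambda_1 \ne \lambda_2$, this forces $\{a, b\} = \{\lambda_1, \lambda_2\}$. Next, I would read off the zero structure from the factored form: for each $j$, exactly one of $\varphi_a(\lambda_j), \varphi_b(\lambda_j)$ vanishes, so $\Theta(\lambda_j)$ has rank one with kernel spanned by either $W^*e_1$ or $W^*e_2$. Because the roles at $\lambda_1$ and $\lambda_2$ are distinct and $W$ is unitary, this shows $\ker\Theta(\lambda_1) \perp \ker\Theta(\lambda_2)$.

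The final step is to compute these kernels directly from \eqref{eq:badPhi} and see what orthogonality forces. Evaluating $\Theta$ at $\lambda_1$ and $\lambda_2$ yields the rank-one matrices
\begin{equation*}
\Theta(\lambda_1) = \begin{pmatrix} 0 & 0 \\ u_{21} & u_{22}\varphi_2(\lambda_1) \end{pmatrix}, \qquad \Theta(\lambda_2) = \begin{pmatrix} \varphi_1(\lambda_2)u_{11} & 0 \\ u_{21} & 0 \end{pmatrix},
\end{equation*}
whose kernels (using $\varphi_j(\lambda_k) \ne 0$ for $j \ne k$ and that each column of $U$ is a unit vector) are spanned by $(-u_{22}\varphi_2(\lambda_1), u_{21})^T$ and $(0,1)^T$, respectively. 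Orthogonality then reduces to $u_{21} = 0$, and the unitarity relations for $U$ force $u_{12} = 0$ as well, so $U$ is diagonal with unimodular diagonal entries.

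The main (minor) obstacle is simply bookkeeping — tracking which index of $W^*e_i$ corresponds to $\lambda_1$ versus $\lambda_2$ in the kernel-orthogonality step; this is resolved by noting that the conclusion only uses that $\{W^*e_1, W^*e_2\}$ is an orthonormal pair, so the two cases $a = \lambda_1$ and $a = \lambda_2$ lead to the same orthogonality conclusion.
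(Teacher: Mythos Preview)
Your proof is correct and follows essentially the same approach as the paper: both use a determinant comparison to identify $\{a,b\}=\{\lambda_1,\lambda_2\}$ and then analyze the kernels of $\Theta(\lambda_1)$ and $\Theta(\lambda_2)$. The only cosmetic difference is that the paper compares the kernel at $\lambda_2$ (and the range at $\lambda_1$) from both representations to conclude $W$ and $V$ are diagonal, whereas you extract the single fact $\ker\Theta(\lambda_1)\perp\ker\Theta(\lambda_2)$ from the factored form and then compute both kernels from \eqref{eq:badPhi} to get $u_{21}=0$ directly; both routes are equally short.
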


\begin{proof}
  The forward implication is trivial.  For the converse, let
  $\{e_1,e_2\}$ denote the standard basis for $\mathbb R^2$.  By
  taking determinants, it follows that $\{a,b\} = \{\lambda_1,
  \lambda_2\}$.  Changing $V$ and $W$ if necessary, without loss of
  generality it can be assumed that $a=\lambda_1$ and $b=\lambda_2$.
  Evaluating at $\lambda_2$ it follows that $We_2 = \alpha e_2.$
  Because $W$ is unitary, it now follows that $W$ is diagonal.  A
  similar argument shows that $V$ is diagonal, and the result follows.
\end{proof}

\begin{lemma}
  \label{lem:posI}
  Suppose $\mu_{i,j}$ are $2\times 2$ matrix-valued measures on a
  measure space $(X,\Sigma)$ for $i,j=0,1$.  If $\mu_{i,j}(X)=I$ for
  all $i,j$ and if, for each $\omega\in\Sigma$ the $4\times 4$
  $($block $2\times 2$ matrix with $2\times 2$ matrix entries$)$
  \begin{equation*}
    \begin{pmatrix} \mu_{i,j}(\omega)\end{pmatrix}_{i,j=1}^2
  \end{equation*}
  is positive semidefinite, then $\mu_{i,j}=\mu_{0,0}$ for each
  $i,j=0,1$.
\end{lemma}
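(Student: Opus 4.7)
The plan is to reinterpret the hypothesis as a statement about a single positive semidefinite $4\times 4$ matrix-valued measure $M(\omega) = (\mu_{i,j}(\omega))_{i,j=0}^{1}$ whose total mass $M(X)$ is the rank-$2$ matrix $J \otimes I_2$, where $J = \begin{pmatrix} 1 & 1 \\ 1 & 1 \end{pmatrix}$. Explicitly, $J \otimes I_2 = 2P$, where $P$ is the orthogonal projection of $\mathbb{C}^2 \oplus \mathbb{C}^2$ onto the ``diagonal'' subspace $\{(v,v) : v \in \mathbb{C}^2\}$, namely
\[
P = \tfrac{1}{2}\begin{pmatrix} I & I \\ I & I \end{pmatrix}.
\]

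First I would show that every $M(\omega)$ is supported on the range of $P$. Since $M(\omega) + M(X \setminus \omega) = M(X) = 2P$ and both summands are positive semidefinite, we get $0 \leq M(\omega) \leq 2P$. Compressing by the complementary projection $Q = I - P$ yields $0 \leq QM(\omega)Q \leq 2QPQ = 0$, so $QM(\omega)Q = 0$. Since $M(\omega) \geq 0$, this forces $M(\omega)^{1/2}Q = 0$, i.e., $M(\omega)Q = 0$, and hence $M(\omega) = PM(\omega)P$.

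Second, I would plug this identity into the block decomposition of $M(\omega)$. A direct calculation gives
\[
PM(\omega)P = \tfrac{1}{4}\bigl(\mu_{0,0}(\omega) + \mu_{0,1}(\omega) + \mu_{1,0}(\omega) + \mu_{1,1}(\omega)\bigr) \otimes J,
\]
so requiring $M(\omega) = PM(\omega)P$ forces each of the four blocks $\mu_{i,j}(\omega)$ to equal the common value $\tfrac{1}{4}\sum_{i,j}\mu_{i,j}(\omega)$. In particular $\mu_{i,j}(\omega) = \mu_{0,0}(\omega)$ for all $i,j \in \{0,1\}$ and every $\omega \in \Sigma$, which is exactly the conclusion.

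The only thing requiring any care is the step from $M(\omega) \leq 2P$ to $M(\omega) = PM(\omega)P$, i.e., recognizing that the total mass being rank-deficient pins down the support of every intermediate value; once this is in hand, the rest is linear algebra on $2 \times 2$ blocks, and no real obstacle remains.
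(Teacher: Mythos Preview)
Your proof is correct and takes a somewhat different route from the paper's. The paper fixes a unit vector $f\in\mathbb{C}^2$, forms the scalar $2\times 2$ kernel $\gamma(\omega)=(\langle\mu_{i,j}(\omega)f,f\rangle)_{i,j}$, and observes that $0\preceq\gamma(\omega)\preceq\gamma(X)$ with $\gamma(X)$ the rank-one all-ones matrix; this forces $\gamma(\omega)=c_\omega\,\gamma(X)$, so all four scalar entries agree, and then complex polarization upgrades the conclusion to the matrix-valued $\mu_{i,j}$. You instead work directly with the $4\times 4$ block measure, using that the total mass $M(X)=2P$ has rank $2$ to trap every $M(\omega)$ in the range of $P$, and then read off the block identity from $M(\omega)=PM(\omega)P$. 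Both arguments rest on the same order-theoretic fact that $0\preceq A\preceq B$ forces $\operatorname{ran}A\subseteq\operatorname{ran}B$; you apply it once at the $4\times 4$ level, while the paper applies it at the scalar $2\times 2$ level and pays for that reduction with a polarization step. Your route is slightly more direct for this particular statement; the paper's reduction-to-scalars is a standard manoeuvre that avoids tracking block structure. (One harmless slip: with the Kronecker convention you use for $J\otimes I_2$, the computation of $PM(\omega)P$ gives $\tfrac14\,J\otimes\Sigma$ rather than $\tfrac14\,\Sigma\otimes J$; your stated conclusion about the blocks is correct either way.)
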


\begin{proof}
  Fix a unit vector $f\in\mathbb C^2$ and let
  \begin{equation*}
    \nu_{i,j}(\omega) = \ip{\mu_{i,j}(\omega)f}{f}.
  \end{equation*}
  It follows that $\nu_{i,j}(X) = 1$ and for each $\omega \in \Sigma$
  \begin{equation*}
    \gamma(\omega)  = \begin{pmatrix} \nu_{i,j}(\omega)
    \end{pmatrix}_{i,j=1}^2
  \end{equation*}
  is positive semidefinite.  On the other hand,
  \begin{equation*}
    \gamma(X) -\gamma(\omega)\ge 0
  \end{equation*}
  and since $\gamma(X)$ is rank one (with a one in each entry), there
  is a constant $c=c_\omega$ such that
  \begin{equation*}
    \gamma(\omega) = c\gamma(X).
  \end{equation*}
  Consequently, $\nu_{i,j}(\omega) = \nu_{1,1}(\omega)$.  By
  polarization it now follows that $\mu_{i,j} = \mu_{1,1}$ for each
  $i,j=1,2.$
\end{proof}

\begin{lemma}
  \label{lem:I-PP*}
  There exist independent vectors $v_1,v_2\in\mathbb C^2$ and, for any
  finite subset $\fsg$ of the disc, functions $a,b:\fsg\to\mathbb C^2$
  in the span of $\{x^2 k_{\lambda_1}(x)v_1, x^2k_{\lambda_2}(x)v_2\}$
  such that
  \begin{equation*}
    \frac{I-\Phi(x)\Phi(y)^*}{1-xy^*} = a(x)a(y)^* + b(x)b(y)^*.
  \end{equation*}
\end{lemma}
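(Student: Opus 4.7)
\emph{Proof plan.} The natural setting is that $(I-\Phi(x)\Phi(y)^*)/(1-xy^*)$ is the reproducing kernel $K_\Phi$ of the de Branges--Rovnyak model space
\begin{equation*}
H(\Phi) := H^2(\mathbb{C}^2) \ominus \Phi H^2(\mathbb{C}^2),
\end{equation*}
and the plan is to produce an orthonormal basis of $H(\Phi)$ lying in the span of the two generators named in the statement, then invoke the standard RKHS identity $K_\Phi(x,y) = \sum_j e_j(x) e_j(y)^*$ for an orthonormal basis $\{e_j\}$ of a finite-dimensional vector-valued RKHS.

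A direct computation using \eqref{eq:Phi} and $\det U = -2$ gives $\det \Phi = -\varphi_1 \varphi_2$, so $\Phi$ is a $2 \times 2$ rational matrix inner function whose determinant has exactly two simple zeros in $\mathbb{D}$, namely $\lambda_1$ and $\lambda_2$. Hence $\dim H(\Phi) = 2$. Evaluating \eqref{eq:badPhi} at $\lambda_j$, the vanishing $\varphi_j(\lambda_j) = 0$ collapses one of the diagonal factors, forcing $\Phi(\lambda_j)$ to have rank one; its cokernel $\ker \Phi(\lambda_j)^*$ is therefore one-dimensional, spanned by an explicit vector $v_j$ which may be read off from the entries of $U$. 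The hypothesis that $U$ has no zero entries, combined with $\lambda_1 \neq \lambda_2$, guarantees that $v_1$ and $v_2$ are linearly independent.

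Setting $h_j(x) := k_{\lambda_j}(x)\, v_j$ (where $k_{\lambda_j}(x) = 1/(1-\lambda_j^* x)$ is the Szeg\H{o} kernel), the reproducing property immediately yields $h_j \in H(\Phi)$: for every $f \in H^2(\mathbb{C}^2)$,
\begin{equation*}
\langle h_j, \Phi f \rangle = \langle v_j, (\Phi f)(\lambda_j) \rangle = \langle \Phi(\lambda_j)^* v_j, f(\lambda_j) \rangle = 0.
\end{equation*}
Since the $v_j$ are independent and $\lambda_1 \neq \lambda_2$, the pair $\{h_1, h_2\}$ is linearly independent in $H^2(\mathbb{C}^2)$, and by the dimension count it forms a basis of $H(\Phi)$. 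Applying Gram--Schmidt yields an orthonormal basis $\{a, b\}$ of $H(\Phi)$ with each element still in the span of $\{h_1, h_2\}$, and the standard reproducing kernel identity $K_\Phi(x,y) = a(x) a(y)^* + b(x) b(y)^*$ gives the desired factorization; restricting to any finite $\fsg \subset \mathbb{D}$ is then immediate.

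The main technical point is the rank-one evaluation of $\Phi(\lambda_j)$ and the linear independence of the resulting cokernel vectors $v_1, v_2$; this is precisely where the assumption that $U$ has no zero entries enters in an essential way. Once the two cokernel vectors are identified, everything else is reproducing kernel bookkeeping.
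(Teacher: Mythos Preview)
Your proof is correct and follows essentially the same route as the paper: both identify $(I-\Phi(x)\Phi(y)^*)/(1-xy^*)$ as the reproducing kernel of the two-dimensional model space $H(\Phi)=\ker M_\Phi^*$, exhibit the basis $\{k_{\lambda_j}v_j\}$ with $v_j\in\ker\Phi(\lambda_j)^*$, orthonormalize, and read off the factorization from the RKHS identity. The only cosmetic difference is the dimension count---you invoke the degree of $\det\Phi$, while the paper uses the three-factor decomposition of $M_\Phi$ to bound $\dim\ker M_\Phi^*\le 2$ and then fills in the explicit kernel vectors ($v_1=e_1$ and a specific $v_2$ read off from $U$) to reach equality.
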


\begin{proof}
  Let $M_\Phi$ denote the operator of multiplication by $\Phi$ on
  $H^2_{\mathbb C^2}$, the Hardy-Hilbert space of $\mathbb C^2$-valued
  functions on the disk.  Because $\Phi$ is unitary-valued on the
  boundary, $M_\Phi$ is an isometry.  In fact, $M_\Phi$ is the product
  of three isometries in view of Equation \eqref{eq:Phi}.  The
  adjoints of the first and third have one dimensional kernels.  The
  middle term is unitary and so its adjoint has no kernel.  Thus, the
  kernel of $M_\Phi^*$ has dimension at most two.  It is evident that
  $k_{\lambda_1}e_1$ is in the kernel of $M_\Phi^*$.  Choose a unit
  vector $v_2$ in $\mathbb C^2$ with entries $\alpha$ and $\beta \neq
  0$ such that
  \begin{equation*}
    \begin{pmatrix} \alpha \varphi_{\lambda_1}(\lambda_2) \\ \beta
    \end{pmatrix} = U e_2,
  \end{equation*}
  with $U$ the unitary appearing in Equation \eqref{eq:Phi}.  That
  such a choice of $\alpha$ and $\beta\ne 0$ is possible follows from
  the assumption that $\lambda_1\neq\lambda_2$, which ensures that
  $\varphi_{\lambda_1}(\lambda_2)\ne 0$, and the assumption that $U$
  has no non-zero entries, giving $\beta\neq 0$.  Further, with
  this choice of $v_2$ a simple calculation shows that
  $k_{\lambda_2}v_2$ is also in the kernel of $M_\Phi^*$.  Hence, the
  dimension of the kernel of $M_\Phi^*$ is two.  Since $M_\Phi$ is an
  isometry, $I-M_\Phi M_\Phi^*$ is the projection onto the kernel of
  $M_\Phi^*$.

  Choose an orthonormal basis $\{a,b\}$ for the kernel of $M_\Phi^*$
  so that $ I-M_\Phi M_\Phi^* = aa^* + bb^*.$ It now follows that, for
  vectors $v,w\in\mathbb C^2$,
  \begin{equation*}
    \begin{split}
      \ip{\frac{I-\Phi(x)\Phi(y)^*}{1-xy^*} v}{w}
   = &\, \ip{(I-M_\Phi M_\Phi^*) k_yv}{k_xw} \\
   = &\, \ip{(aa^*+bb^*) k_y v}{k_x w} \\
   = &\, \ip{ k_yv}{a}\, \ip{a}{k_xw} + \ip{k_yv}{b}\, \ip{b}{k_xw} \\
   = &\, \ip{(a(x)a(y)^* + b(x)b(y)^*)v}{w}.   
 \end{split}
\end{equation*}
\end{proof}

The following is well known. 

\begin{lemma}
  \label{lem:szegoinv}
  Let $s$ be the Szeg\H{o} kernel,
  \begin{equation*}
    s(x,y) =\frac{1}{1-xy^*}.
  \end{equation*}
  If $x_1,\dots,x_m$ and $y_1,\dots,y_m$ are two $m$-tuples each of
  distinct points in the unit disk $\mathbb D,$ then the matrix
  \begin{equation*}
    M=\begin{pmatrix}  s(x_j,y_\ell) \end{pmatrix}_{j,\ell=1}^n
  \end{equation*} 
  is invertible.
\end{lemma}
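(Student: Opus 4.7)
The plan is to show $M$ has trivial kernel, hence is invertible. Suppose $Mv=0$ for some $v=(v_1,\dots,v_m)^t\in\mathbb C^m$, and form the rational function
\begin{equation*}
  f(z) = \sum_{\ell=1}^m v_\ell\,\frac{1}{1-zy_\ell^*}.
\end{equation*}
The condition $Mv=0$ says exactly that $f(x_j)=0$ for $j=1,\dots,m$. I would then clear denominators: write $f=p/q$ where $q(z)=\prod_{\ell=1}^m(1-zy_\ell^*)$ has degree at most $m$ and $p(z)$ has degree at most $m-1$. Since $|x_jy_\ell^*|<1$, we have $q(x_j)\neq 0$, so $p(x_j)=0$ for every $j$. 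A polynomial of degree at most $m-1$ with $m$ distinct zeros is identically zero, so $p\equiv 0$ and therefore $f\equiv 0$.

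To conclude, I would invoke the linear independence of the Szeg\H o kernels $s(\cdot,y_\ell)$ at distinct points $y_\ell$. This is immediate from the partial fraction expansion: if $y_{\ell_0}\neq 0$ then $z\mapsto(1-zy_{\ell_0}^*)^{-1}$ has a simple pole at $1/y_{\ell_0}^*$ not shared by the other summands, forcing $v_{\ell_0}=0$; if $y_{\ell_0}=0$ the corresponding summand is the constant $1$ while the others vanish at infinity, so again $v_{\ell_0}=0$. Hence $v=0$ and $M$ is invertible.

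The main obstacle is not really an obstacle at all — this is the classical fact that a generalized Cauchy matrix is invertible, and the only choice is in presentation. An alternative route would be to quote the explicit determinant formula
\begin{equation*}
  \det M = \frac{\prod_{j<k}(x_k-x_j)(y_k^*-y_j^*)}{\prod_{j,\ell}(1-x_jy_\ell^*)},
\end{equation*}
which is nonzero precisely because the $x_j$ are distinct and the $y_\ell$ are distinct, but the reproducing-kernel argument above feels more in keeping with the spirit of the rest of the paper and avoids a combinatorial determinant manipulation.
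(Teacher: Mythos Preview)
Your argument is correct and is essentially identical to the paper's: both assume $Mv=0$, form the rational function $\sum_\ell v_\ell\, s(\cdot,y_\ell)$, clear denominators to get a polynomial of degree at most $m-1$ vanishing at the $m$ distinct points $x_j$, and then invoke linear independence of the Szeg\H{o} kernels to conclude $v=0$. The only difference is that you supply an explicit pole/partial-fraction justification for that linear independence, whereas the paper simply asserts it as a fact about $H^2(\mathbb D)$.
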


\begin{proof}
  Suppose $Mc=0$ where $c$ is the vector with entries $c_1,\dots,c_m.$
  Let
  \begin{equation*}
    r(x) = \sum c_\ell  s(x,y_\ell) =  [(1-xy_1^*)\cdots (1-xy_m^*)]^{-1}
    \sum c_\ell p_\ell(x),
  \end{equation*}
  for polynomials $p_\ell$ of degree $m-1$.  Hence $r$ is a rational
  function with numerator a polynomial $p$ of degree at most $m-1$ and
  denominator which does not vanish on $\mathbb D$.  The hypotheses
  imply that $p(x_j)=0$ for $j=1,2,\dots,m.$ Hence $p$ is identically
  zero, as then is $r$.  Since the kernel functions
  $\{s(\cdot,t_\ell): \ell =1,2,\dots,m\}$ form a linearly independent
  set in $H^2(\mathbb D)$, it follows that $c=0$.
\end{proof}

Given a $2\times 2$ matrix valued measure and a vector
$\gamma\in\mathbb C^2$, let $\nu_\gamma$ denote the scalar measure
defined by $\nu_\gamma(\omega)= \gamma^* \nu(\omega)\gamma$.  Note
that if $\nu$ is a positive measure (that is, takes positive
semidefinite values), then each $\nu_\gamma$ is a positive measure.
 Let $\Psi_0=\Psi\backslash\{\psi_\infty\}$. \index{$\Psi_0$}

\begin{lemma}
  \label{lem:finiteistwo}
  Suppose $\nu$ is a $2\times 2$ positive matrix-valued measure on
  $\Psi_0.$   For each $\gamma$ the
  measure $\nu_\gamma$ is a nonnegative linear combination of at most
  two point masses if and only if there exist $($possibly not
  distinct$)$ points $\fz_1,\fz_2$ and positive semidefinite matrices
  $Q_1$ and $Q_2$ such that \index{$Q$}
  \begin{equation*}
    \nu = \sum_{j=1}^2 \delta_{\fz_j} Q_j,
  \end{equation*}
  where $\delta_{\fz_1}, \delta_{\fz_2}$ are scalar unit point
  measures on $\Psi$ supported at $\psi_{\fz_1}, \psi_{\fz_2}$,
  respectively.
\end{lemma}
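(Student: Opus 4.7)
The reverse direction is immediate: if $\nu = \delta_{\fz_1}Q_1+\delta_{\fz_2}Q_2$ with each $Q_j\succeq 0$, then $\nu_\gamma = (\gamma^*Q_1\gamma)\delta_{\fz_1}+(\gamma^*Q_2\gamma)\delta_{\fz_2}$ is a nonnegative combination of at most two point masses for every $\gamma$. For the converse, the plan is to show that $\nu$ itself is concentrated on at most two points of $\Psi_0$; the decomposition $\nu=\sum\delta_{\fz_j}Q_j$ with $Q_j\succeq 0$ then drops out of positivity of the matrix measure (and of a repetition $\fz_1=\fz_2$ or $Q_2=0$ when needed).

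First I would pass from $\nu$ to a finitely supported measure. Specializing the hypothesis to $\gamma=e_1$ and $\gamma=e_2$, the scalar diagonal entries $\nu_{11}$ and $\nu_{22}$ are each supported on at most two points, so the trace measure $\operatorname{tr}\nu=\nu_{11}+\nu_{22}$ is supported on a finite set $S\subseteq\Psi_0$ with $|S|\le 4$. Since a positive semidefinite matrix with vanishing trace must itself be zero, $\nu$ vanishes off $S$, and hence
\begin{equation*}
  \nu = \sum_{p\in S} \delta_p A_p
\end{equation*}
for nonzero positive semidefinite $2\times 2$ matrices $A_p$, $p\in S$.

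The crux is to exclude $|S|\ge 3$. For any $\gamma\in\mathbb C^2$, the hypothesis applied to $\nu_\gamma=\sum_{p\in S}(\gamma^*A_p\gamma)\delta_p$ forces at least $|S|-2$ of the coefficients $\gamma^*A_p\gamma$ to vanish; since $A_p\succeq 0$, this is the same as $\gamma\in\ker A_p$. If $|S|=3$ this gives $\mathbb C^2=\bigcup_{p\in S}\ker A_p$, and if $|S|=4$ it gives $\mathbb C^2=\bigcup_{p\ne q}(\ker A_p\cap\ker A_q)$.

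The main (and essentially the only) obstacle is now the standard fact that a vector space over an infinite field cannot be written as a finite union of proper subspaces; applied to either display above, this forces some $A_p=0$, contradicting $p\in S$. Therefore $|S|\le 2$, and writing $S=\{\fz_1,\fz_2\}$ (with repetition or with $Q_j=0$ if $|S|<2$) and $Q_j = A_{\fz_j}$ finishes the proof.
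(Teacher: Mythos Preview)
Your argument is correct. Both proofs begin by observing that the diagonal entries $\nu_{11}$ and $\nu_{22}$ are each supported on at most two points, but from there the routes diverge.

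The paper performs a case analysis on the overlap of $\operatorname{supp}\nu_{11}=\{\fz_1,\fz_2\}$ and $\operatorname{supp}\nu_{22}=\{\fw_1,\fw_2\}$ (disjoint, one common point, equal), in each case exhibiting a particular $\gamma$ (such as $e_1+e_2$ or $e_1+s^*e_2$) for which $\nu_\gamma$ would be supported on three or more points unless two of the candidate points coincide. This is explicit but somewhat ad hoc. Your approach instead packages the obstruction uniformly: since each nonzero $A_p$ has kernel of dimension at most one, the covering $\mathbb{C}^2=\bigcup_p\ker A_p$ (which already follows from $|S|\ge 3$, without the further split into $|S|=3,4$) contradicts the fact that a vector space over an infinite field is not a finite union of proper subspaces. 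This replaces the case analysis by a single linear-algebraic principle and is arguably cleaner; the paper's version, on the other hand, is entirely elementary and avoids invoking any outside fact. Either way the conclusion is the same.
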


\begin{proof}
  If $\nu = \sum_{j=1}^2 \delta_{\fz_j} Q_j$ with $\fz_1,\fz_2$ and
  $Q_1, Q_2$ as in the statement of the lemma, then clearly each
  $\nu_\gamma$ is a nonnegative linear combination of at most two
  point masses.

  For the converse, the $M_2$-valued measure $\nu$, expressed as a
  $2\times 2$ matrix of scalar measures with respect to the standard
  orthonormal basis $\{e_1,e_2\}$ of $\mathbb C^2$ has the form
  \begin{equation}
    \label{eqn:nu-entries}
    \nu = \begin{pmatrix} \nu_{11} & \nu_{12} \\ \nu_{21} & \nu_{22}
    \end{pmatrix}.
  \end{equation}
  Since $\nu(\omega)$ is a positive matrix for every measurable set
  $\omega$, it follows that $\nu_{11}, \nu_{22}$ are positive
  measures.  Moreover for the off-diagonal entries we have
  $\nu_{21}=\nu_{12}^*$.  If $\omega$ is such that
  $\nu_{11}(\omega)=0$, then by positivity $\nu_{12}(\omega)=0$, and
  similarly if $\nu_{22}(\omega)=0$.  So it follows that $\nu_{12}$
  and $\nu_{21}$ are absolutely continuous with respect to both
  $\nu_{11}$ and $\nu_{22}$.  This argument also shows that $\nu_{12}$
  and $\nu_{21}$ are supported on the intersection of the supports for
  $\nu_{11}$ and $\nu_{22}$.

  Choosing $\gamma=e_1$, the hypotheses imply there exist
  $\alpha_1,\alpha_2\ge 0$ and points $\fz_1,\fz_2$ such that
  \begin{equation*}
    \nu_{11} = \sum_{j=1}^2 \alpha_j \delta_{\fz_j}.
  \end{equation*}
  Likewise there exist points $\fw_1,\fw_2$ and scalars
  $\beta_1,\beta_2\ge 0$ such that
  \begin{equation*}
    \nu_{22} = \sum_{j=1}^2 \beta_j \delta_{\fw_j}.
  \end{equation*}

  There are several cases to consider.  First suppose that the
  $\{\fz_1,\fz_2\}$ and $\{\fw_1,\fw_2\}$ have no points in common.
  Then $\nu_{12}=0=\nu_{21}$.  Also, for $\gamma=e_1+e_2$, by
  assumption
  \begin{equation*}
    \nu_\gamma = \nu_{11}+\nu_{22}
  \end{equation*}
  has support at two points, and so $\fz_1=\fz_2$ and $\fw_1=\fw_2$.
  It follows that the union of the supports of $\nu_{11}$ and
  $\nu_{22}$ has cardinality at most two and $\nu_{12}=0$, yielding
  the desired result.

  Next suppose that the sets $\{\fz_1,\fz_2\}$ and $\{\fw_1,\fw_2\}$
  have one point in common, say $\fz_1=\fw_1$.  In this case
  $\nu_{12}$ is supported at $\fz_1$ and there is a complex number $s$
  so that
  \begin{equation*}
    \nu_{12} = s \delta_{\fz_1}.
  \end{equation*}
  If $s=0$, choose $\gamma = e_1+e_2$, so that $\nu_\gamma =
  \nu_{11}+\nu_{22}$.  Otherwise set $\gamma= e_1 + s^* e_2$, in which
  case,
  \begin{equation*}
    \nu_{\gamma} = \nu_{11} + 2 |s|^2 \delta_{\fz_1} + |s|^2 \nu_{22}.
  \end{equation*}
  In either case, $\nu_\gamma$ has support at $\{\fz_1,\fz_2,\fw_2\}$
  and only two of these can be distinct.  

  The remaining case has the sets $\{\fz_1,\fz_2\}$ and
  $\{\fw_1,\fw_2\}$ equal, and the result is immediate.

  Positivity of $\nu$ implies
  positivity of $Q_1$ and $Q_2$.
\end{proof}

\section{The proof of Theorem \ref{thm:mainneg}}
 \label{sec:proof}
Fix  a finite set $\fs$ \index{$\fs$} containing
$0,\lambda_1,\lambda_2$ and consisting of at least six distinct
points. This choice of $\fs$ along with the prior choices of
  $\Phi$ and $F$ as in Equations \eqref{eq:Phi} and \eqref{eq:bF}
  remain in effect for the rest of the paper. 
 Accordingly, let $\Sigma_F=\Sigma_{F,\fs}$. 

 We next prove the following diagonalization result.

\begin{theorem}
  \label{thm:cone-must-diagonalize}
  If $\Sigma_F$ lies in the cone $C_{2,\fs}$, that is there exists an
  $M_2(\mathbb C)$ valued  $\mu$ such that 
  \begin{equation}
   \label{eq:Fmu}
    I-F(x)F(y)^* = \int_{\Psi} (1-\psi(x)\psi(y)^*)\,d\mu_{x,y}(\psi)
    \qquad x,y\in \fs,
  \end{equation}
  then there exists
  rank one orthogonal projections $Q_1, Q_2$
  summing to $I$, such that, for $x,y\in\fs$,
  \begin{equation}
    \label{eq:Fmubetter}
    I-F(x)F(y)^* = (1-x^2y^{*2}\varphi_1(x)\varphi_1(y)^*)Q_1 +
    (1-x^2y^{*2}\varphi_2(x)\varphi_2(y)^*)Q_2.
  \end{equation}
\end{theorem}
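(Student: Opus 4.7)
The plan is to take any measure $\mu\in M_2^+(\fs,\Psi)$ representing $\Sigma_F$ as in \eqref{eq:Fmu} and force $\mu$ into the atomic form $\delta_{\psi_{\lambda_1}}Q_1+\delta_{\psi_{\lambda_2}}Q_2$ with $Q_1,Q_2$ mutually orthogonal rank-one projections summing to $I$; equation \eqref{eq:Fmubetter} then follows by direct substitution. First I normalize: setting $y=0$ in \eqref{eq:Fmu}, and using $F(0)=0$ and $\psi(0)=0$ for every $\psi\in\Psi$, gives $\mu_{x,0}(\Psi)=I$ and symmetrically $\mu_{0,y}(\Psi)=I$; in particular $\mu_{0,0}(\Psi)=I$.

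Next I factor both sides of \eqref{eq:Fmu}. Using Lemma \ref{lem:I-PP*} and $F=z^2\Phi$, the left-hand side rewrites as
\begin{equation*}
I-F(x)F(y)^* = (1-x^2y^{*2})I + x^2y^{*2}(1-xy^*)\bigl[a(x)a(y)^*+b(x)b(y)^*\bigr],
\end{equation*}
so the kernel $K(x,y):=a(x)a(y)^*+b(x)b(y)^*$ is the orthogonal projection kernel onto the 2-dimensional kernel of $M_\Phi^*$, spanned by $k_{\lambda_1}e_1$ and $k_{\lambda_2}v_2$. Splitting $\Psi=\{\psi_\infty\}\sqcup\Psi_0$ on the right-hand side and expanding
\begin{equation*}
1-\psi_\lambda(x)\psi_\lambda(y)^* = (1-x^2y^{*2})+x^2y^{*2}(1-xy^*)(1-|\lambda|^2)k_\lambda(x)k_\lambda(y)^*
\end{equation*}
for $\lambda\in\mathbb D$, the integral becomes $(1-x^2y^{*2})\mu_{x,y}(\Psi) + x^2y^{*2}(1-xy^*)G(x,y)$, where $G(x,y)=\int_{\Psi_0}(1-|\lambda|^2)k_\lambda(x)k_\lambda(y)^*\,d\mu_{x,y}(\psi_\lambda)$.

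The core of the proof is to pin down $\mu_{0,0}$. I would invoke Lemma \ref{lem:finiteistwo}, first verifying its hypothesis: for every $\gamma\in\mathbb C^2$ the scalar measure $(\mu_{0,0})_\gamma$ is a nonnegative combination of at most two point masses. The rank-two structure of $K$, the rigidity of the integral representation of $G$, and the linear independence of distinct Szeg\H{o} kernels (Lemma \ref{lem:szegoinv}) together should force the atoms of $\mu_{0,0}$ to sit exactly at $\psi_{\lambda_1}$ and $\psi_{\lambda_2}$, with no mass at $\psi_\infty$. Matching $K$, an orthogonal projection of rank two, with the resulting two-atom integral then forces the weights $Q_1, Q_2$ to be orthogonal rank-one projections, and $\mu_{0,0}(\Psi)=I$ gives $Q_1+Q_2=I$.

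Finally, I propagate from $\mu_{0,0}$ to all $\mu_{x,y}$. Positivity of $\begin{pmatrix}\mu_{0,0}(\omega) & \mu_{0,y}(\omega) \\ \mu_{y,0}(\omega) & \mu_{y,y}(\omega)\end{pmatrix}\succeq 0$ for every Borel $\omega$, combined with $\mu_{0,0}$ being supported on $\{\psi_{\lambda_1},\psi_{\lambda_2}\}$, forces $\mu_{0,y}$ and $\mu_{y,0}$ to be supported on the same set; the normalization $\mu_{y,0}(\Psi)=\mu_{0,y}(\Psi)=I$ together with an elementary rank computation against the orthogonal $Q_j$ forces $\mu_{y,0}=\mu_{0,y}=\mu_{0,0}$. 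A further Schur-complement argument, matching both sides of \eqref{eq:Fmu} at $x=y$, disposes of any extra mass in $\mu_{y,y}$, and substitution yields \eqref{eq:Fmubetter}. The principal obstacle is the atom-identification step: confirming that $\mu_{0,0}$ is supported exactly at $\{\psi_{\lambda_1},\psi_{\lambda_2}\}$, which is where the rigidity of Lemma \ref{lem:I-PP*} must be exploited in full.
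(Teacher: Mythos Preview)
Your proposal has the order of the argument inverted, and this creates a genuine gap at the step you yourself flag as ``the principal obstacle.''  The identity \eqref{eq:Fmu} carries \emph{no} distributional information about $\mu_{0,0}$: since every $\psi\in\Psi$ satisfies $\psi(0)=0$, the equation at $(0,0)$ (and at $(0,y)$ or $(x,0)$) collapses to the total-mass statement $\mu_{0,0}(\Psi)=I$, nothing more.  The rank-two structure of $K$ and the Szeg\H{o}-kernel independence constrain $G(x,y)=\int_{\Psi_0}k_\lambda(x)k_\lambda(y)^*\,d\mu_{x,y}$ at \emph{nonzero} $x,y$, but that integral is against $\mu_{x,y}$, not $\mu_{0,0}$.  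So there is no way to verify the hypothesis of Lemma~\ref{lem:finiteistwo} for $\mu_{0,0}$ before you have linked $\mu_{0,0}$ to the other $\mu_{x,y}$.  Your propagation step has the same problem in reverse: positivity of the $2\times 2$ block constrains the off-diagonal $\mu_{0,y}$ by the diagonal $\mu_{0,0}$, but it does \emph{not} constrain $\mu_{y,y}$ from $\mu_{0,0}$; the vague ``Schur-complement argument at $x=y$'' cannot repair this, because a single diagonal identity cannot separate the total mass $\mu_{y,y}(\Psi)$ from the distribution entering $G(y,y)$.

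The paper's route avoids this circularity by first proving that $\tilde A(x,y):=\mu_{x,y}(\Psi)=I$ for \emph{all} $x,y\in\fs$ (Lemma~\ref{lem:At-is-I}).  This is the real work: one rewrites \eqref{eq:Fmu} (after dividing by the Szeg\H{o} kernel) as $(1+xy^*)(\tilde A(x,y)-I)=R(x,y)-\tilde R(x,y)$, observes that $\tilde A-I$ is a positive kernel of rank at most two, and then uses the specific basis $\{k_{\lambda_1}v_1,\,k_{\lambda_2}v_2\}$ for $\ker M_\Phi^*$ (crucially, $v_1\ne v_2$ by the non-diagonal choice of $\Phi$) to force any representing functions $r,s$ for $\tilde A-I$ to vanish identically.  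Only after $\tilde A\equiv I$ does Lemma~\ref{lem:posI} apply to give a \emph{single} measure $\mu=\mu_{x,y}$ for all $x,y$; the support analysis (Lemmas~\ref{lem:pre-diag1} and \ref{lem:prediag-final}) is then carried out on this $\mu$ using the identity at general nonzero $x,y$.  You have essentially skipped the step that makes the rest possible.
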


The proof proceeds by a sequence of lemmas which increasingly restrict
the measures $\mu_{x,y}$ in \eqref{eq:Fmu}.

Assume that $\Sigma_F\in C_{2,\fs}$.  Multiplying (\ref{eq:Fmu}) by
the Szeg\H{o} kernel $s(x,y)=(1-xy^*)^{-1}$ obtains
\begin{equation}
  \label{eqn:cone-times-szego}
  \left(\frac{I-F(x)F(y)^*}{1-xy^*}\right)_{x,y\in\fs} 
  = {\left(\int_{\Psi} \left(\frac{1-\psi(x)\psi(y)^*}{1-xy^*}\right)
      \,d\mu_{x,y}(\psi) \right)}_{x,y\in \fs}.
\end{equation}
Next, since $F$ has the form $x^2\Phi(x)$,
\begin{equation*}
 \begin{split}
  \frac{I-F(x)F(y)^*}{1-xy^*}
    =\, & \frac{I_2-x^2y^{*2}I_2  +x^2y^{*2}I_2 - x^2y^{*2}
      \Phi(x)\Phi(y)^*}{1-xy^*} \\
    =\, & (1+xy^*)I_2 +x^2y^{*2} \left(\frac{I -
        \Phi(x)\Phi(y)^*}{1-xy^*}\right).
  \end{split}
\end{equation*}
Similarly, for the test functions $\psi_\lambda(x)
=x^2\varphi_\lambda(x)$ at points $\lambda\in \mathbb D$,
\begin{equation}
 \label{eq:test-fn-szego}
  \frac{1-\psi_\lambda(x)\psi_\lambda(y)^*}{1-xy^*} =
  (1+xy^*) +x^2y^{*2}
  \left(\frac{1 - \varphi_\lambda(x) \varphi_\lambda(y)^*}{1 -
      xy^*}\right).
\end{equation}
(Here we take $\varphi_\infty = 1$.)  Letting
\begin{equation*}
 k_\lambda(x) = \frac{\sqrt{1-|\lambda|^2}}{1-\lambda^* x}
\end{equation*}
denote the normalized Szeg\H{o} kernel at $\lambda$ and using the
identity
\begin{equation}
 \label{eq:klambda-philambda}
   \frac{1-\varphi_\lambda(x)\varphi_\lambda(y)^*}{1-xy^*} =
   k_\lambda(x)k_\lambda(y)^*,
\end{equation}
for $\lambda \neq \infty$, equation \eqref{eq:test-fn-szego} gives,
\begin{equation*}
  \frac{1-\psi_\lambda(x)\psi_\lambda(y)^*}{1-xy^*}
  = (1+xy^*) + x^2y^{*2} k_\lambda(x)k_\lambda(y)^*,
\end{equation*}
while for $\lambda = \infty$ (correspondingly, $\psi_\infty(z) =
z^2$ and $k_\infty(x) = 0$),
\begin{equation*}
  \frac{1-\psi_\infty(x)\psi_\infty(y)^*}{1-xy^*}
  =1+xy^*.
\end{equation*}
Putting these computations together, we rewrite
(\ref{eqn:cone-times-szego}) as
\begin{equation}
  \label{eqn:cone-times-szego-alt}
  \begin{split}
    \frac{I-F(x)F(y)^*}{1-xy^*} &= (1+xy^*)I_2 + x^2y^{*2}
    \left(\frac{I-\Phi(x)\Phi(y)^*}{1 -xy^*}\right) \\
    & = (1+xy^*)\int_{\Psi}\, d\mu_{x,y}(\psi) +
    x^2y^{*2}\int_{\Psi_0} k_\lambda(x)k_\lambda(y)^* \,
    d\mu_{x,y}(\psi).
  \end{split}
\end{equation}
Note that the first integral is over $\Psi$ while the second is just
over $\Psi_0=\Psi\backslash\{z^2\}$ since $k_\infty(x)=0.$

Combining Lemma \ref{lem:I-PP*} with Equation
\eqref{eqn:cone-times-szego-alt} gives
\begin{equation}\label{eqn:cone-times-szego-ab}
 \begin{split}
  (1+xy^*)I + & x^2 y^{*2} \left(a(x)a(y)^* + b(x)b(y)^*\right) \\ &  = 
   \int_{\Psi}  (1+xy^*)\,d\mu_{x,y}(\psi) + \int_{\Psi_0}x^2y^{*2}\,
   k_\lambda(x) k_\lambda(y)^* \, d\mu_{x,y}(\psi).
 \end{split}
\end{equation}

The next step will be to remove the $x,y$ dependence in $\mu$.
Introducing some notation, let
\begin{equation*}
 \begin{split}
  \tilde{A}(x,y) = & \int_{\Psi} d\mu_{x,y}(\psi); \\
  R(x,y) = & x^2 y^{*2} \left(a(x)a(y)^* + b(x)b(y)^*\right);
  \quad\text{ and } \\
  \tilde{R}(x,y) = & (xy^*)^2 \int_{\Psi_0} k_\lambda(x)
  k_\lambda(y)^* \, d\mu_{x,y}(\psi).
 \end{split}
\end{equation*}
Thus, $\tilde{A},$ $R,$ and $\tilde{R}$ are all positive kernels on
$\fs$.  With this notation and some rearranging
 of Equation \eqref{eqn:cone-times-szego-ab}, for $x,y\in\fs$,
\begin{equation}
 \label{eq:allwegot}
  (1+xy^*)(\tilde{A}(x,y)-I) = R(x,y) -\tilde{R}(x,y).
\end{equation}

Let
\begin{equation}
  \label{eq:mathbbK}
  \mathbb K = \{x^2 k_{\lambda_1}(x)v_1, x^2k_{\lambda_2}(x)v_2\},
\end{equation}
the set of vectors spanning the kernel of $I-M_\Phi M_\Phi^*$
appearing in Lemma \ref{lem:I-PP*}.

\begin{lemma}
 \label{lem:allwegot}
  With the above notations,  the assumption that $\Sigma_F\in
  C_{2,\fs}$ and for $x,y\in\fs$,
  \begin{enumerate}[(i)]
  \item The $M_2(\mathbb C)$ valued kernel $(\tilde{A}-I)(x,y) =
    \tilde{A}(x,y)-I$ is positive semidefinite;
  \item
    The $M_2(\mathbb C)$ valued kernel $R(x,y) -\tilde{R}(x,y)$ is
    positive semidefinite with rank at most two;
  \item The range of $\tilde{R}$ lies in the range of $R$, which is in
    the span of $\mathbb K$; and
  \item Either
    \begin{enumerate}[(a)]
    \item The kernel $\tilde{A}-I$ has rank at most one; i.e., there
      is a function $r:\fs\to \mathbb C^2$ such that
      \begin{equation}
        \label{eq:rankAt}
        \tilde{A}(x,y) = I + r(x)r(y)^*,  \quad\mbox{ or;}
      \end{equation}
    \item there exist functions $r,s:\fs\to\mathbb C^2$ such
      \begin{equation*}
        \tilde{A}(x,y) = I + r(x)r(y)^* +s(x)s(y)^*,
      \end{equation*}
      and a point $\fz\in \fs\setminus \{0\}$ such that
      $r(\fz)=0=s(\fz)$.
    \end{enumerate}
  \end{enumerate}
\end{lemma}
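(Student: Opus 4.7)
The plan is to use the master identity \eqref{eq:allwegot}, namely $(1+xy^*)(\tilde A(x,y)-I) = R(x,y)-\tilde R(x,y)$, together with boundary information at $0\in\fs$, to establish the four claims in turn. For (i), since every $\psi\in\Psi$ satisfies $\psi(0)=0$ and $F(0)=0$, setting $y=0$ in \eqref{eq:Fmu} gives $\tilde A(x,0)=\mu_{x,0}(\Psi)=I$, and symmetrically $\tilde A(0,y)=\tilde A(0,0)=I$. Because $\tilde A$ is itself a positive kernel (the total mass of the positive kernel-measure $\mu$), the Schur-complement manoeuvre with pivot at the row and column indexed by $0$ yields that $(\tilde A(x,y)-I)_{x,y\in\fs\setminus\{0\}}$ is positive semidefinite; appending the identically zero rows and columns at $x=0$ or $y=0$ preserves positivity, so $\tilde A-I$ is positive on all of $\fs$.

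For (ii) and (iii), the kernel $1+xy^*$ is scalar-valued positive, so by the Schur product theorem its entrywise product with the $M_2(\mathbb C)$-valued positive kernel $\tilde A-I$ is positive; combined with \eqref{eq:allwegot}, this proves $R-\tilde R$ is positive. Writing $R(x,y)=\alpha(x)\alpha(y)^*+\beta(x)\beta(y)^*$ with $\alpha(x)=x^2 a(x)$ and $\beta(x)=x^2 b(x)$ displays $R$ as having rank at most $2$; by Lemma~\ref{lem:I-PP*} both $\alpha$ and $\beta$ lie in the span of $\mathbb K$. The standard fact that $0\le B\le A$ forces $\mathrm{range}(B)\subseteq\mathrm{range}(A)$, applied to $0\le\tilde R\le R$ and $0\le R-\tilde R\le R$, then puts both ranges inside $\mathrm{range}(R)\subseteq\mathrm{span}\,\mathbb K$, proving (ii) and (iii) simultaneously.

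For (iv), decompose $\tilde A-I=\sum_{j=1}^n r_j r_j^*$ as a rank-$n$ factorization with the $r_j$ linearly independent in $(\mathbb C^2)^\fs$. Expanding gives
\begin{equation*}
R-\tilde R \,=\, (1+xy^*)(\tilde A-I) \,=\, \sum_{j=1}^n \bigl(r_j(x)\,r_j(y)^* + (xr_j)(x)\,(xr_j)(y)^*\bigr),
\end{equation*}
so the range of $R-\tilde R$ is exactly $\mathrm{span}\{r_j,\,xr_j : 1\le j\le n\}$. Since (ii) caps this dimension at $2$ and the $r_j$ alone span an $n$-dimensional subspace, we must have $n\le 2$. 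The cases $n=0,1$ give conclusion (a) directly. In the case $n=2$, setting $r=r_1$, $s=r_2$, and $V=\mathrm{span}\{r,s\}$, the dimension bound forces $xr,xs\in V$, so $V$ is invariant under the multiplication operator $M_x$ acting pointwise on $(\mathbb C^2)^\fs$.

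The key geometric step, and the main obstacle, is the structure of $M_x$-invariant subspaces. Since $M_x$ on $(\mathbb C^2)^\fs$ is diagonalizable with spectrum $\fs$ and two-dimensional eigenspaces $E_{x_0}=\{f:f(x)=0\text{ for }x\ne x_0\}$, every invariant subspace splits as $V=\bigoplus_{x_0\in\fs}(V\cap E_{x_0})$; hence there is a set $S_V\subseteq\fs$ with $\sum_{x_0\in S_V}\dim(V\cap E_{x_0})=2$ and in particular $|S_V|\le 2$. The identity $\tilde A(0,0)=I$ forces $r(0)=s(0)=0$, whence $V\cap E_0=0$ (otherwise $r,s$ would be linearly dependent on $\fs\setminus\{0\}$, contradicting the rank-$2$ hypothesis), so $0\notin S_V$. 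Because $|\fs|\ge 6$, the set $\fs\setminus(S_V\cup\{0\})$ is nonempty, and any $\fz$ in it satisfies $r(\fz)=s(\fz)=0$, yielding conclusion (b).
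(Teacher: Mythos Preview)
Your proof is correct. Parts (i)--(iii) follow the paper's approach closely, though you make explicit the Schur-complement step for (i) and the Schur-product step for (ii) that the paper only alludes to.

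For (iv) your argument is genuinely different from the paper's. The paper writes $\tilde A-I=rr^*+ss^*$ (not assuming independence), observes that $r,xr,s,xs$ all lie in the two-dimensional space $\mathrm{span}\,\mathbb K$, and then does a direct case analysis: if $r$ is nonzero at two or more points of $\fs$, then $r$ and $xr$ are independent and hence span $\mathrm{span}\,\mathbb K$, so one can write $s=\alpha_1 r+\alpha_2 xr$ and $xs=\beta_1 r+\beta_2 xr$; subtracting $x\cdot s$ from $xs$ yields the polynomial identity $(\beta_1+(\beta_2-\alpha_1)x+\alpha_2 x^2)r(x)=0$ on $\fs$, which forces either $\alpha_2=0$ (and hence rank one) or $r$ to vanish off the two roots of that quadratic, giving the common zero $\fz$. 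Your route instead recognizes that when $n=2$ the two-dimensional span $V=\mathrm{span}\{r,s\}$ is invariant under the diagonalizable multiplication operator $M_x$ on $(\mathbb C^2)^{\fs}$, and then reads off from the eigenspace decomposition $V=\bigoplus_{x_0}(V\cap E_{x_0})$ that every element of $V$ is supported on at most two points of $\fs$, neither of which can be $0$ since $r(0)=s(0)=0$. This is cleaner and more structural: it replaces the explicit quadratic computation by a one-line spectral argument, and it makes transparent why having many points in $\fs$ guarantees a common zero away from $0$. The paper's approach, by contrast, stays closer to the concrete kernel identities and requires no linear-algebra digression, at the cost of a slightly fiddlier case split.
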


\begin{proof}
  Since $\psi(0) = 0$ for all $\psi\in \Psi$, it follows from
  \eqref{eq:Fmu} for all $y\in\fs$,
  \begin{equation*}
    I = I-F(0)F(y)^* = \int_\Psi (1-\psi(0)\psi(y)^*) d\mu_{0,y}(\psi)
    = \int d\mu_{0,y}(\lambda)  = \tilde A(0,y)
  \end{equation*}
  and (\textit{i}) follows.

  That $R-\tilde{R}$ is positive semidefinite follows from item
  (\textit{i}) and Equation \eqref{eq:allwegot}.  Since $R$ is rank
  two it must be the case that the rank of $R-\tilde{R}$ is rank at
  most two, completing the proof of item (\textit{ii}).

  By item (\textit{ii}) and Douglas' lemma, the range of $\tilde{R}$
  is contained in the range of $R$.  By Lemma \ref{lem:I-PP*}, the
  range of $R$ is spanned by the set $\mathbb K$ and (\textit{iii})
  follows.

  To prove item (\textit{iv}), first note that in any case Equation
  \eqref{eq:allwegot} and item (\textit{ii}) imply $\tilde{A}-I$ has
  at most rank two; i.e., there exists $r,s:\fs\to\mathbb C^2$ such
  that
  \begin{equation*}
    \tilde{A}-I = r(x)r(y)^* +s(x)s(y)^*.
  \end{equation*}
  From Equation \eqref{eq:allwegot}, each of $r,xr,s,xs$ lie in the
  range of $R$, which equals the span of $\mathbb K$.  If $r$ is
  nonzero at two points in $\fs$, then $r$ and $xr$ are linearly
  independent and hence span the range of $R$.  In this case, as both
  $s$ and $xs$ are in the range of $R$ there exists $\alpha_j$ and
  $\beta_j$ (for $j=1,2$) such that
  \begin{equation*}
    \begin{split}
      s = & \alpha_1 r  + \alpha_2 xr \\
      xs = & \beta_1 r + \beta_2 xr.
    \end{split}
  \end{equation*} 
  It follows that
  \begin{equation}
    \label{eq:rands}
    0 = xs-xs = (\beta_1 + (\beta_2-\alpha_1)x + \alpha_2 x^2) r(x).
  \end{equation}
  If $\alpha_2=0$, then $s$ is a multiple of $r$ and case
  (\textit{iv})(\textit{a}) holds.  Otherwise, in view of
  \eqref{eq:rands}, $r$ is zero with the exception of at most two
  points.  Thus $r$ is zero at two points, one of which, say $\fz$,
  must be different from $0$.  Since $s$ must be zero when $r$ is,
  $s(\fz)=0$ too and (\textit{iv})(\textit{b}) holds.

  The remaining possibility is that both $r$ and $s$ are non-zero at
  at most one point each, and these points may be distinct.  In this
  situation $r$ and $s$ have at least two common zeros, one of which
  must be different from $0$ and again (\textit{iv})(\textit{b})
  holds.
\end{proof}

\begin{lemma}
  \label{lem:At-is-I}
  Under the assumption that $\Sigma_F\in C_{2,\fs}$, the $2\times 2$
  matrix-valued kernel $\tilde{A}$ is constantly equal to $I$; i.e.,
  $\At(x,y)= I_2$ for all $x,y\in\fs.$
\end{lemma}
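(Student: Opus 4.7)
My approach is to show directly that the positive kernel $P := \tilde{A} - I$ on $\fs$ must vanish identically, by combining positivity with a range constraint derived from the equation $(1+xy^*)P = R - \tilde R$. The computation in the proof of Lemma \ref{lem:allwegot}(i), namely setting $x = 0$ in \eqref{eq:Fmu} and using the fact that $\psi(0) = 0$ for every $\psi \in \Psi$, gives $\tilde{A}(0,y) = I$ for all $y \in \fs$, so $P(0,y) = 0$ and in particular $P(0,0) = 0$. Since $P$ is a positive $M_2(\mathbb{C})$-valued kernel (Lemma \ref{lem:allwegot}(i)), a Cholesky factorization produces $\mathbb{C}^2$-valued functions $u_1, \dots, u_k$ on $\fs$ with
\[
P(x,y) = \sum_{i=1}^k u_i(x)\, u_i(y)^*, \qquad u_i(0) = 0 \text{ for every } i,
\]
the vanishing at $0$ following from $P(0,0) = \sum_i u_i(0) u_i(0)^* = 0$.

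Next, I would identify the range of the kernel $(1+xy^*)P$. From the pointwise expansion
\[
(1+xy^*)\,u_i(x)u_i(y)^* = u_i(x)u_i(y)^* + \bigl(x\,u_i(x)\bigr)\bigl(y\,u_i(y)\bigr)^{*},
\]
the range of $(1+xy^*)P$, viewed as an operator on $(\mathbb{C}^2)^{\fs}$, is the linear span of the $2k$ functions $u_i(x)$ and $x\,u_i(x)$, for $i = 1, \dots, k$. On the other hand, by \eqref{eq:allwegot}, $(1+xy^*)P = R - \tilde{R}$, and Lemma \ref{lem:allwegot}(iii) places the range of this kernel inside the range of $R$, which coincides with $\operatorname{span}\,\mathbb{K} = \operatorname{span}\{x^2 k_{\lambda_1}(x) v_1,\, x^2 k_{\lambda_2}(x) v_2\}$. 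Consequently, for each $i$ there exist scalars $\alpha_{i,1}, \alpha_{i,2}, \beta_{i,1}, \beta_{i,2}$ such that, as functions on $\fs$,
\[
u_i(x) = x^2\bigl(\alpha_{i,1} k_{\lambda_1}(x) v_1 + \alpha_{i,2} k_{\lambda_2}(x) v_2\bigr), \quad
x\,u_i(x) = x^2\bigl(\beta_{i,1} k_{\lambda_1}(x) v_1 + \beta_{i,2} k_{\lambda_2}(x) v_2\bigr).
\]

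To conclude, I would multiply the first identity by $x$, equate with the second, cancel the common factor $x^2$ (legitimate on $\fs \setminus \{0\}$), and then invoke the linear independence of $v_1, v_2$ together with the non-vanishing of $k_{\lambda_j}$ on $\fs$ to extract the scalar relations $\alpha_{i,j}\,x = \beta_{i,j}$ for every $x \in \fs \setminus \{0\}$ and every $i,j$. Since $|\fs| \ge 6$ and so $|\fs \setminus \{0\}| \ge 5$, these equations hold at more than one value of $x$, forcing $\alpha_{i,j} = \beta_{i,j} = 0$. Hence each $u_i \equiv 0$, so $P \equiv 0$ and $\tilde{A} = I$ on $\fs$. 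The main technical point is the identification in the middle paragraph of the range of the Schur product $(1+xy^*)P$ with the span of the functions $u_i$ and $x\,u_i$; once this is in place, the remaining argument is linear algebra using the factorization structure of the range of $R$. Notably, this plan uses only parts (i) and (iii) of Lemma \ref{lem:allwegot} and bypasses the case analysis in (iv).
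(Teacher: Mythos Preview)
Your argument is correct and in fact cleaner than the paper's. The core mechanism is the same in both proofs: write the factors of $\tilde A - I$ and their multiples by $x$ as elements of $\operatorname{span}\mathbb K$, then use the linear independence of $v_1,v_2$ together with the nonvanishing of the $k_{\lambda_j}$ to force all coefficients to zero. The difference is organizational. The paper first proves the dichotomy in Lemma~\ref{lem:allwegot}(iv) (either $\tilde A - I$ has rank $\le 1$, or it has rank $2$ with a common nonzero zero), and then treats the two cases separately in the proof of Lemma~\ref{lem:At-is-I}, using a distinct evaluation-at-$\fz$ argument for case (b). You instead take an arbitrary Cholesky factorization $P=\sum_i u_iu_i^*$, observe that positivity of $(1+xy^*)P=R-\tilde R$ gives $\operatorname{range}((1+xy^*)P)=\operatorname{span}\{u_i,\,xu_i\}$, and run the case-(a) argument uniformly on each $u_i$. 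This bypasses part (iv) entirely and is a genuine simplification.

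One small citation fix: when you write ``Lemma~\ref{lem:allwegot}(iii) places the range of this kernel inside the range of $R$'', part (iii) actually asserts $\operatorname{range}\tilde R\subset\operatorname{range}R$. What you need is $\operatorname{range}(R-\tilde R)\subset\operatorname{range}R$, which follows from part (ii) (namely $0\preceq R-\tilde R\preceq R$) and Douglas' lemma, exactly as the paper itself argues at the start of its case-(a) proof.
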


\begin{proof}
  In the case that (\textit{iv})(\textit{a}) holds in Lemma
  \ref{lem:allwegot}, it (more than) suffices to prove that the $r$ in
  Equation \eqref{eq:rankAt} is $0$.  To this end, let $\rR$ denote
  the range of $R$ which, by Lemma \ref{lem:allwegot}, is spanned by
  the set $\mathbb K$ appearing in Equation \eqref{eq:mathbbK}.  From
  Equations \eqref{eq:allwegot} and \eqref{eq:rankAt},
  \begin{equation*}
    \tilde{R} +   (1+xy^*)r(x)r(y)^* = R.
  \end{equation*}
  Thus, $\rR$ contains both $r$ and $xr$; that is, both $r$ and $xr$
  are in the span of $\mathbb K$.  Consequently, there exists
  $\alpha_j$ and $\beta_j$ ($j=1,2$) such that
  \begin{equation*}
    \begin{split}
      r = & x^2 \sum_{j=1}^2 \alpha_j k_{\lambda_j}(x) v_j \\
      xr = & x^2 \sum_{j=1}^2 \beta_j k_{\lambda_j}(x)v_j.
    \end{split}
  \end{equation*}
  Hence,
  \begin{equation}
    \label{eq:premystery}
    0 = xr-xr = x^2 \sum_{j=1}^2 (\beta_j -x \alpha_j)k_{\lambda_j}(x)
    v_j.
  \end{equation}
  Since the set $\{v_1,v_2\}$ is a basis for $\mathbb C^2$ (see Lemma
  \ref{lem:I-PP*}), it has a dual basis $\{w_1,w_2\}$.  Taking the
  inner product with $w_\ell$ in Equation \eqref{eq:premystery} gives,
  \begin{equation*}
    0 = x^2 (\beta_\ell-x\alpha_\ell)k_{\lambda_\ell}(x)
  \end{equation*}
  for $x\in \fs$.  Choosing $x=\lambda_\ell$ (which is not zero)
  implies $\beta_\ell-\lambda_\ell \alpha_\ell=0$.  But then choosing
  any $x\in \fs$ different from both $0$ and $\lambda_j$ (and using
  $k_{\lambda_j}(x)\ne 0$) implies $\beta_\ell-x\alpha_\ell=0$.  Hence
  $\alpha_\ell=0=\beta_\ell$ and consequently $r(x)=0$ for all $x$.

  Now suppose (\textit{iv})(\textit{b}) in Lemma \ref{lem:allwegot}
  holds.  In particular, there exists a point $\fz$ in $\fs \setminus
  \{0\}$ such that $r(\fz)=0=s(\fz)$.  By the same reasoning as in the
  first part of this proof, there exist $\alpha_j$ and $\beta_j$ such
  that
\begin{equation*}
 \begin{split}
     r = & x^2 \sum_{j=1}^2 \alpha_j k_{\lambda_j}(x) v_j \\
     s = & x^2 \sum_{j=1}^2 \beta_j k_{\lambda_j}(x) v_j.
 \end{split}
\end{equation*}
  Taking the inner product with $w_\ell$ and evaluating at $\fz$ yields
\begin{equation*}
   0= \alpha_\ell k_{\lambda_\ell}(\fz).
\end{equation*}
Thus $\alpha_\ell=0$.  Likewise, $\beta_\ell=0$.  Thus $r=0=s$ and the
proof is complete.
\end{proof}

\begin{remark}
  \label{rem:nomoremystery}
  Observe that if it were the case that $v_1=v_2$ in Equation
  \eqref{eq:premystery}, then it would not be possible to conclude
  that the $\alpha_j$ and $\beta_j$ are $0$.  Indeed, in such a
  situation, choosing $\beta_j=(-1)^j$ and
  $\alpha_j=(-1)^j\lambda_j^*$ gives a non-trivial solution.  However,
  the case $v_1=v_2$ corresponds to a $\Phi$ having the form
  \begin{equation*}
    \Phi = \begin{pmatrix} 1 & 0 \\ 0 & \varphi_{\lambda_1}
      \varphi_{\lambda_2} \end{pmatrix},
  \end{equation*}
  which is explicitly ruled out by our choice of $\Phi$ and
  Lemma~\ref{lem:badPhi}.
\end{remark}

\begin{lemma}
  \label{lem:prediag-cone}
  There exists a $2\times 2$ matrix valued positive measure $\mu$ on
  $\Psi$ such that $\mu(\Psi)=I_2$ and
  \begin{equation}\label{eqn:prediag-cone}
    K^\Phi(x,y):= \frac{1-\Phi(x)\Phi(y)^*}{1-xy^*} =
    \int_{\Psi_0} k_\lambda(x)k_\lambda(y)^* \,d\mu(\psi) 
  \end{equation}
  for all $x,y\in\fs\setminus\{0\}$. 
\end{lemma}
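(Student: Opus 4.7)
The plan is to combine Lemma \ref{lem:At-is-I} and Lemma \ref{lem:posI} to eliminate the $(x,y)$-dependence in $\mu_{x,y}$, and then back-substitute into Equation \eqref{eqn:cone-times-szego-alt}. Starting from the assumption $\Sigma_F \in C_{2,\fs}$, fix the kernel $\mu = (\mu_{x,y}) \in M_2^+(\fs)$ provided by \eqref{eq:Fmu}.

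First, Lemma \ref{lem:At-is-I} provides the normalization $\mu_{x,y}(\Psi) = \tilde A(x,y) = I_2$ for all $x,y \in \fs$. Second, the positivity of $\mu$ as a kernel in $M_2^+(\fs)$ implies that for any pair $x, y \in \fs$ and any Borel $\omega \subset \Psi$, the $4\times 4$ block matrix
\begin{equation*}
\begin{pmatrix} \mu_{x,x}(\omega) & \mu_{x,y}(\omega) \\ \mu_{y,x}(\omega) & \mu_{y,y}(\omega) \end{pmatrix}
\end{equation*}
is positive semidefinite. These are precisely the hypotheses of Lemma \ref{lem:posI}, which forces $\mu_{x,x} = \mu_{x,y} = \mu_{y,x} = \mu_{y,y}$. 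Varying the pair over all of $\fs$, all the diagonal measures $\mu_{x,x}$ coincide with a common measure $\mu$, and every $\mu_{x,y}$ agrees with it. Thus $\mu$ is a single $M_2(\mathbb C)$-valued positive measure on $\Psi$ satisfying $\mu(\Psi) = I_2$.

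To finish, substitute this constant $\mu$ into the right-hand side of \eqref{eqn:cone-times-szego-alt}. The two $(1+xy^*)I_2$ terms cancel, leaving
\begin{equation*}
x^2 y^{*2}\, K^\Phi(x,y) = x^2 y^{*2} \int_{\Psi_0} k_\lambda(x) k_\lambda(y)^*\, d\mu(\psi)
\end{equation*}
for all $x, y \in \fs$. The integration may be restricted to $\Psi_0$ since $k_\infty \equiv 0$ makes the point $\psi_\infty$ contribute nothing. Dividing both sides by $x^2 y^{*2}$, which is valid precisely for $x, y \in \fs \setminus \{0\}$, yields the desired identity \eqref{eqn:prediag-cone}.

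The substantive work is already concentrated in the preceding lemmas: Lemma \ref{lem:At-is-I} delivers the exact normalization $\mu_{x,y}(\Psi) = I_2$, and Lemma \ref{lem:posI} serves as the black-box tool that collapses the two-variable family $\{\mu_{x,y}\}$ to a single measure. No genuine obstacle remains here beyond carefully invoking these two results and noting that the restriction $x, y \neq 0$ is exactly what is needed to cancel the factor $x^2 y^{*2}$.
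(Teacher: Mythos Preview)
Your proof is correct and follows essentially the same approach as the paper: invoke Lemma~\ref{lem:At-is-I} to obtain $\mu_{x,y}(\Psi)=I_2$, apply Lemma~\ref{lem:posI} (you do so pairwise, which is a valid and slightly more explicit way to handle the fact that $\fs$ has more than two points) to collapse $\mu_{x,y}$ to a single measure, and then substitute into \eqref{eqn:cone-times-szego-alt} and divide by $(xy^*)^2$.
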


\begin{proof}
  By Lemma~\ref{lem:At-is-I}, $\tilde{A}(x,y)=I$ for all $x,y\in\fs$.
  An examination of the definition of $\tilde{A}$ and application of 
  Lemma~\ref{lem:posI}  implies there is a positive
  measure $\mu$ such that $\mu_{x,y}=\mu$ 
  for all $(x,y).$  Substituting this representation 
  for $\mu_{x,y}$ into and some canceling and  rearranging
  of  \eqref{eqn:cone-times-szego-alt}  gives,
  \begin{equation*}
    (xy^*)^2 \left(\frac{I-\Phi(x)\Phi(y)^*}{1 -xy^*}\right) 
    =  x^2y^{*2}\int_{\Psi_0} k_\lambda(x)k_\lambda(y)^* \,
    d\mu(\psi).
  \end{equation*}
  Dividing by $(xy^*)^2$ (and of course excluding either $x=0$ or
  $y=0$) gives the result.
 \end{proof}

 Now that $\mu$ has no $x,y$ dependence, the next step is to restrict
 its support.  For this we employ Lemma \ref{lem:I-PP*}.
Recall that $\mu$ is a positive
$2\times 2$ matrix-valued measure on $\Psi$.  Let $\delta_{\infty}$
denote point mass at the point $\psi_\infty = z^2$.

\begin{lemma}
  \label{lem:pre-diag1} 
  Under the assumption that $\Sigma_F\in C_{2,\fs}$, and with notation
  as above, there are two points $\fz_1,\fz_2$ in $\fs$ such that the
  measure $\mu$ has the form $\mu = \delta_{\fz_1}Q_1 +
  \delta_{\fz_2}Q_2 + \delta_{\infty} P$, where $Q_1, Q_2, P$ are
  $2\times 2$ matrices satisfying $0\leq Q_1, Q_2, P\leq 1$ and
  $Q_1+Q_2+P=I$, and $\delta_{\fz_1}, \delta_{\fz_2}$ are scalar unit
  point measures on $\Psi$ supported at $\psi_{\fz_1}, \psi_{\fz_2}$,
  respectively.
\end{lemma}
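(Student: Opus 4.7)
The plan is to first show that for each $\gamma\in\mathbb{C}^2$ the scalar positive measure $\mu_\gamma:=\gamma^*\mu\gamma$, restricted to $\Psi_0$, is supported on at most two points of $\mathbb{D}$. Lemma \ref{lem:finiteistwo} will then provide a decomposition $\mu|_{\Psi_0}=\delta_{\fz_1}Q_1+\delta_{\fz_2}Q_2$ with $Q_1,Q_2\geq 0$, and setting $P:=\mu(\{\psi_\infty\})$ captures the remaining mass. Positivity of $\mu$ gives $P\geq 0$, and the total-mass identity $\mu(\Psi)=I_2$ supplied by Lemma \ref{lem:prediag-cone} (via $\tilde A\equiv I$) forces $Q_1+Q_2+P=I_2$ and hence $0\leq Q_1,Q_2,P\leq I$.

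To obtain the support reduction, pair the identity of Lemma \ref{lem:prediag-cone} with $\gamma$ on left and right to get, for $x,y\in\fs':=\fs\setminus\{0\}$,
\begin{equation*}
L(x,y):=\gamma^*K^\Phi(x,y)\gamma=\int_{\Psi_0}k_\lambda(x)k_\lambda(y)^*\,d\mu_\gamma(\lambda).
\end{equation*}
Lemma \ref{lem:I-PP*} provides a factorization $K^\Phi=aa^*+bb^*$, so $L(x,y)=\alpha(x)\alpha(y)^*+\beta(x)\beta(y)^*$ with $\alpha:=\gamma^*a$ and $\beta:=\gamma^*b$. Thus, viewed as a positive semidefinite matrix indexed by $\fs'\times\fs'$, $L$ has rank at most $2$.

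The key step is to translate this rank bound into a bound on the cardinality of $\mathrm{supp}(\mu_\gamma|_{\Psi_0})$. For $\lambda\in\mathbb{D}$ let $k^\lambda\in\mathbb{C}^{\fs'}$ denote the vector $(k_\lambda(x))_{x\in\fs'}$, and let $W$ be the linear span in $\mathbb{C}^{\fs'}$ of $\{k^\lambda:\lambda\in\mathrm{supp}(\mu_\gamma|_{\Psi_0})\}$. The identity $\ker L=W^\perp$ is the main bookkeeping point: if $v\in W^\perp$ then $\ip{v}{k^\lambda}=0$ on $\mathrm{supp}(\mu_\gamma)$, so $\ip{Lv}{v}=\int|\ip{v}{k^\lambda}|^2\,d\mu_\gamma=0$ and $v\in\ker L$; conversely, $v\in\ker L$ makes the same integral vanish, and by continuity $\lambda\mapsto\ip{v}{k^\lambda}$ is zero on the closed support, placing $v$ in $W^\perp$. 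Hence $\dim W=\mathrm{rank}\,L\le 2$. Were $\mathrm{supp}(\mu_\gamma|_{\Psi_0})$ to contain three distinct points $\lambda'_1,\lambda'_2,\lambda'_3$, we would pick any three distinct $x_1,x_2,x_3\in\fs'$ (possible since $|\fs'|\ge 5$) and apply Lemma \ref{lem:szegoinv} to the $3\times 3$ matrix $(s(x_i,\lambda'_j))$; its invertibility forces $k^{\lambda'_1},k^{\lambda'_2},k^{\lambda'_3}$ to be linearly independent, contradicting $\dim W\le 2$. Thus $\mathrm{supp}(\mu_\gamma|_{\Psi_0})$ contains at most two points for every $\gamma$, which is precisely the hypothesis of Lemma \ref{lem:finiteistwo}; invoking that lemma and adjoining $P$ concludes the argument.

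The essential ingredients are Lemmas \ref{lem:I-PP*} and \ref{lem:szegoinv}: the former supplies a rank-$2$ representation of $K^\Phi$, and the latter converts that rank bound into a support bound. The one nontrivial verification is the identity $\ker L=W^\perp$, which relies on continuity of $\lambda\mapsto\ip{v}{k^\lambda}$ to upgrade $\mu_\gamma$-almost-everywhere vanishing to vanishing on the closed support; this is also where the standing assumption that $\fs$ contain at least six points is used, since $|\fs'|\ge 3$ is needed to ensure Lemma \ref{lem:szegoinv} applies to any triple of candidate support points.
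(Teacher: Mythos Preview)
Your proof is correct and follows essentially the same route as the paper: pair the identity of Lemma~\ref{lem:prediag-cone} with $\gamma$, use the rank-two factorization from Lemma~\ref{lem:I-PP*}, deduce that each $\mu_\gamma|_{\Psi_0}$ is supported on at most two points, and finish with Lemma~\ref{lem:finiteistwo} and the total-mass identity. The only cosmetic difference is in the support-reduction step: the paper picks a nonzero vector $c$ on a three-point subset $\mathfrak G\subset\fs\setminus\{0\}$ in the kernel of the rank-two form and observes that $\lambda\mapsto\sum_{x\in\mathfrak G}k_\lambda(x)c(x)$ is a nontrivial combination of three Szeg\H{o} kernels vanishing $\mu_\gamma$-a.e., hence at no more than two points; your $\ker L=W^\perp$ formulation together with Lemma~\ref{lem:szegoinv} is a clean repackaging of the same idea.
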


\begin{proof}
  We first show that the restriction of $\mu$ to $\mathbb D$ has
  support at no more than two points.  Accordingly, let $\nu$ denote
  the restriction of $\mu$ to $\mathbb D$.

  From Lemma \ref{lem:I-PP*}, for $x,y\in \fs\setminus \{0\}$,
  \begin{equation*}
    \frac{I_2-\Phi(x)\Phi(y)^*}{1-xy^*}=a(x)a(y)^*+b(x)b(y)^*
  \end{equation*}
  where $a,b$ are $\mathbb C^2$ valued functions on $\bfs$.  Fix a
  vector $\gamma$ and define a scalar measure $\nu_\gamma$ on $\Psi$
  by $\nu_\gamma(\omega) = \gamma^* \nu(\omega)\gamma$.  Note that
  \begin{equation*}
    \begin{split}
      \gamma^* \left(a(x)a(y)^* + b(x)b(y)^*\right) \gamma = &
      \gamma^* \left(\int_{\Psi} k_\lambda(x)k_\lambda(y)^*  d\mu
        (\psi) \right)\gamma \\
      =& \int_{\Psi_0} k_\lambda(x)k_\lambda(y)  d\nu_\gamma (\psi)
    \end{split}
  \end{equation*}
  is a kernel of  rank (at most) two. 

  Choosing a three-point subset $\mathfrak G\subset\fs\setminus \{0\}$
  and a nonzero scalar-valued function $c:\mathfrak G\to \mathbb C$
  such that
  \begin{equation}
    \label{eq:3}
    \sum_{x,y\in \mathfrak{G}} c(x)\gamma^* \left(a(x)a(y)^* +
      b(x)b(y)^*\right) c(y)^* =0
  \end{equation}
 gives
  \begin{equation}
    \label{eq:2}
    0 = \int_{\Psi_0} {\left|\sum_{x\in\mathfrak{G}}
      k_\lambda(x)c(x)\right|}^2\, d\nu_\gamma (\psi),
  \end{equation}
  which means that the function $f=\sum_{x\in\mathfrak{G}}
  k_\lambda(x)c(x)$ vanishes for $\nu_\gamma$-a.e.~on $\Psi_0$.  The
  function $f$ is a linear combination of at most three Szeg\H{o}
  kernels, and hence can vanish at at most two points in $\mathbb D$.
  It follows that $\nu_\gamma$ is supported at at most two points in
  $\mathbb D$.  An application of Lemma \ref{lem:finiteistwo} now
  implies that there exist points $\fz_1,\fz_2$ and positive
  semidefinite matrices $Q_1,Q_2$ such that
  \begin{equation*}
    \nu = \sum_{j=1}^2 \delta_{\fz_j} Q_j.
  \end{equation*}
  Letting $P=\mu(\{\infty\})$, it follows that $\mu$ has the promised
  form,
  \begin{equation*}
    \mu = \delta_{\fz_1}Q_1 + \delta_{\fz_2}Q_2 +\delta_{\infty} P.
  \end{equation*}
  Finally, because $\mu$ has total mass the identity, 
  \begin{equation*}
    I=\mu(\Psi) = Q_1+Q_2+P.\qedhere
  \end{equation*}
\end{proof}

To eliminate $P$ and show that the $Q_i$ are orthogonal, rank one
projections, return to Equation (\ref{eqn:prediag-cone}) and rearrange
it once again: recalling the identity of Equation
\eqref{eq:klambda-philambda} and multiplying through by $1-xy^*$ and
using Lemma \ref{lem:pre-diag1}, we have by the description of $\mu$
from the previous lemma, for $x,y\in\fs\setminus\{0\}$,
\begin{equation*}
  1-\Phi(x)\Phi(y)^* =
  (1-\varphi_{\fz_1}(x)\varphi_{\fz_1}(y)^*)Q_1 +
  (1-\varphi_{\fz_2}(x)\varphi_{\fz_2}(y)^*)Q_2,
\end{equation*}
where $\psi_{\fz_1}, \psi_{\fz_2}$ are the support points of the
measure $\mu$. 
Using the fact that $Q_1+Q_2+P=I$, we obtain, for $x,y\in\fs\setminus
\{0\}$,
\begin{equation}
  \label{eqn:prediag-cone-reallysimple}
  \Phi(x)\Phi(y)^* = \varphi_{\fz_1}(x)\varphi_{\fz_1}(y)^*Q_1 +
  \varphi_{\fz_2}(x)\varphi_{\fz_2}(y)^*Q_2 +P.
\end{equation}

\begin{lemma}\label{lem:prediag-final}
  Let $\Phi$ be as above.  In the
  representation~(\ref{eqn:prediag-cone-reallysimple}),
  \begin{enumerate}[(i)]
  \item $\{\fz_1,\fz_2\}=\{\lambda_1,\lambda_2\}$;
  \item  $P=0$; and 
  \item $Q_1,Q_2$ are rank one projections summing to $I$ $($and hence
    mutually orthogonal$)$.
 \end{enumerate}
\end{lemma}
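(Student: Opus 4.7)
The plan is to establish (i), (ii), and (iii) in turn by evaluating the identity \eqref{eqn:prediag-cone-reallysimple} at the two points $\lambda_1$ and $\lambda_2$, where $\Phi$ collapses to rank one, and exploiting the rigid constraints forced by positivity of $Q_1, Q_2, P$ together with the normalization $Q_1+Q_2+P = I$. The main organizational point is to recognize that all the rigidity comes from just these two rank-one evaluations together with the total mass condition; once this is appreciated, every subsequent step is a routine argument with finite-dimensional positive semidefinite matrices.

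For (i), since $\det\Phi = -\varphi_1\varphi_2$ vanishes at each $\lambda_j$, the matrix $\Phi(\lambda_j)\Phi(\lambda_j)^*$ has rank one. Setting $x=y=\lambda_j$ in \eqref{eqn:prediag-cone-reallysimple} expresses this rank one matrix as $|\varphi_{\fz_1}(\lambda_j)|^2\, Q_1 + |\varphi_{\fz_2}(\lambda_j)|^2\, Q_2 + P$, a sum of three positive semidefinite matrices, each of which must therefore have range inside the one-dimensional range of the sum. If neither $\fz_k$ equaled $\lambda_j$, then both coefficients would be nonzero, all three of $Q_1, Q_2, P$ would have one-dimensional range, and $Q_1+Q_2+P = I$ would have rank at most one, a contradiction. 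Applying this at both $\lambda_1$ and $\lambda_2$, and noting that the indices matched to $\lambda_1$ and $\lambda_2$ must be different (else $\lambda_1 = \lambda_2$), gives $\{\fz_1, \fz_2\} = \{\lambda_1, \lambda_2\}$.

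For (ii), relabel so that $\fz_j = \lambda_j$. A direct computation from \eqref{eq:Phi} shows that $\Phi(\lambda_1)\Phi(\lambda_1)^*$ has range $\mathbb{C}e_2$ while $\Phi(\lambda_2)\Phi(\lambda_2)^*$ has range $\mathbb{C}u$ with $u = (\varphi_1(\lambda_2), 1)^T$. The $x=y=\lambda_1$ equation forces both $Q_2$ and $P$ to have range in $\mathbb{C}e_2$, while the $x=y=\lambda_2$ equation forces both $Q_1$ and $P$ to have range in $\mathbb{C}u$. Since $\varphi_1(\lambda_2) \neq 0$ (as $\lambda_1 \neq \lambda_2$), the vectors $e_2$ and $u$ are linearly independent, so $\mathbb{C}e_2 \cap \mathbb{C}u = \{0\}$ and hence $P = 0$.

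For (iii), $P = 0$ reduces $Q_1 + Q_2 + P = I$ to $Q_1 + Q_2 = I$. By Step 2 each $Q_j$ has rank at most one; since the sum has rank two, each must have rank exactly one. Writing $Q_j = \alpha_j v_j v_j^*$ with $|v_j| = 1$ and $\alpha_j > 0$, and setting $V = (v_1 \,|\, v_2)$ and $D = \operatorname{diag}(\alpha_1, \alpha_2)$, the relation $VDV^* = I$ forces $V$ to be invertible (otherwise the sum would be rank one), giving $V^*V = D^{-1}$. The diagonal entries of this matrix equation yield $\alpha_1 = \alpha_2 = 1$, and the off-diagonal entries yield $\langle v_1, v_2\rangle = 0$. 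Thus $Q_1$ and $Q_2$ are mutually orthogonal rank one projections summing to the identity.
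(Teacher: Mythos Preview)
Your proof is correct and follows essentially the same approach as the paper: evaluate \eqref{eqn:prediag-cone-reallysimple} at the rank-one points $\lambda_1,\lambda_2$ and use positivity together with $Q_1+Q_2+P=I$. The only notable difference is in part (ii), where you compute the ranges of $\Phi(\lambda_j)\Phi(\lambda_j)^*$ explicitly from the chosen form of $\Phi$ to see directly that they span distinct lines, whereas the paper argues abstractly that $\operatorname{ran}P\subset\operatorname{ran}Q_1\cap\operatorname{ran}Q_2$ and that this intersection must be $\{0\}$; both routes are equally short and yield the same conclusion.
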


\begin{proof}
  Since $\det \Phi(\lambda_1) =0$, the identity 
  (\ref{eqn:prediag-cone-reallysimple}) implies
  \begin{equation*}
    \Phi(\lambda_1)\Phi(\lambda_1)^* =
    |\varphi_{\fz_1}(\lambda_1)|^2Q_1
    +|\varphi_{\fz_2}(\lambda_1)|^2 Q_2+P,
  \end{equation*}
  that  both sides have rank at most one.  It follows that at least one
  of $\varphi_{\fz_1}, \varphi_{\fz_2}$ (and hence exactly one, since
  the $\lambda_j$ are distinct) must have a zero at $\lambda_1$
  (otherwise the three positive matrices $Q_1,Q_2,P$ would all be
  scalar multiples of the same rank one matrix, which violates
  $Q_1+Q_2+P=I$).  Similarly for $\lambda_2$, so (\textit{i}) is
  proved.  Further, without loss of generality, it can be assumed that
  $\fz_j=\lambda_j$ for $j=1,2$.

  It follows from evaluating at the $\lambda_j$ that each of $Q_1,
  Q_2,P$ has rank at most one.  In particular we have for $j=1,2$,
  \begin{equation}
    \label{eq:1}
    \Phi(\lambda_j)\Phi(\lambda_j)^* = |\varphi_k(\lambda_j)|^2 Q_k
    +P,
  \end{equation}
  where $k\in\{1,2\}$ and $k\neq j$.  This means that $\text{ran}
  P\subset\text{ran} Q_1\cap\text{ran} Q_2$.  On the other hand, if
  $\text{ran} Q_1\cap\text{ran} Q_2 \neq \{0\}$, we have $\text{ran}
  Q_1\subseteq\text{ran} Q_2$ or vice versa, which again contradicts
  $Q_1+Q_2+P=1$.  Thus $\text{ran} Q_1\vee\text{ran} Q_2 = \mathbb
  C^2$, and so $P=0$, which is (\textit{ii}).  Since $Q_2 = 1-Q_1$, if
  $f\in\text{ker} Q_1$, then $Q_2f = f$.  However, $Q_2$ is a rank one
  contraction, so it must be a projection, and then the same follows
  for $Q_1$.  Thus we have~(\textit{iii}).
\end{proof}

\begin{proof}[Proof of Theorem~\ref{thm:cone-must-diagonalize}] 
  Since $F(x)=x^2\Phi(x)$, Theorem~\ref{thm:cone-must-diagonalize} is
  now immediate from Lemma~\ref{lem:prediag-final}.
\end{proof}

\subsection{The proof of Theorem \ref{thm:mainneg}}
\label{subsec:proof}

The proof of Theorem \ref{thm:mainneg} concludes in this subsection.
Recall that we are assuming that $F(z) = z^2\Phi(z)$, where $\Phi$ is
as in \eqref{eq:Phi}.

Suppose that $\Sigma_F \in C_{2,\fs}$. 
From Equation \eqref{eqn:prediag-cone-reallysimple} and Lemma
\ref{lem:prediag-final},
\begin{equation}
  \label{eq:diag}
  \Phi(x)\Phi(y)^* = \sum_{j=1}^2 \varphi_j(x) \varphi_j(y)^* Q_j,
\end{equation}
valid for $x,y\in\fs\setminus \{0\}$.  Since the $Q_j$ are rank one
projections which sum to $I$, there exists an orthonormal basis
$\{\gamma_1,\gamma_2\}$ such that
\begin{equation*}
  Q_j = \gamma_j \gamma_j^*.
\end{equation*}
Let $U$ be the unitary matrix with columns $\gamma_j$, and let
\begin{equation*}
 G(z) = U\begin{pmatrix} \varphi_1(z)& 0 \\ 0 & \varphi_2(z)
 \end{pmatrix}.
\end{equation*}
Observe $\Phi(x)\Phi(y)^* = G(x)G(y)^*$ for $x,y\in\fs\setminus
\{0\}.$

Fix $\zeta\in\fs\setminus \{0,\lambda_1,\lambda_2\}$.  Then
$\Phi(\zeta)$ is invertible and further $\Phi(\zeta)\Phi(\zeta)^* =
G(\zeta)G(\zeta)^*$.  Hence by Douglas' Lemma, there is a unitary $W$
such that $\Phi(\zeta)=G(\zeta)W^*$.  Consequently,
\begin{equation*}
 0= \Phi(\zeta)\Phi(y)^* - G(\zeta)G(y)^* = G(\zeta){\left(\Phi(y)W
     -G(y)\right)}^*,
\end{equation*}
and therefore $\Phi(y)W=G(y)$, for $y\in\fs\setminus\{0\}$.  Returning
to the definition of $G$, we arrive at the conclusion that, for
$x\in \fs\setminus \{0\}$,
\begin{equation}
 \label{eq:contra}
 \Phi(x) = U\begin{pmatrix} \varphi_1(x) & 0 \\ 0 & \varphi_2(x)
 \end{pmatrix} W^*.
\end{equation}
Now $\Phi$ and $G$ are both rational matrix inner functions of degree at
most two.  Since $\fs\setminus \{0\}$ contains at least five points 
it is a set of uniqueness for rational functions of
degree at most two, and hence \eqref{eq:contra} must hold on all of
$\mathbb D$.  Returning to $\Phi$, it now follows that, on all of
$\mathbb D$,
\begin{equation*}
 \Phi = U\begin{pmatrix} \varphi_1 & 0 \\ 0 & \varphi_2\end{pmatrix}
 W^*.
\end{equation*}
By Lemma \ref{lem:badPhi},
\begin{equation*}
 \Phi = \begin{pmatrix} s\varphi_1 & 0 \\ 0 & t\varphi_2 \end{pmatrix}
\end{equation*}
for unimodular constants $s$and $t$, contrary to our choice of $\Phi$
in \eqref{eq:Phi}.  We conclude that $\Sigma_F \notin C_{2,\fs}$, and
so by Proposition~\ref{prop:counterifsep}, there exists a contractive
representation of $\cA$ which is contractive, but not completely
contractive.

\section{Rational Dilation for the annulus and the variety $z^2=w^2$}
\label{sec:ratforA}

This section provides a proof of rational dilation for the annulus
along the lines of \cite{M}, but with a major simplification suggested
by Agler \cite{Agprivate} (see also \cite{AHR}). Direct appeal to the
systematic study of the extreme rays of functions of positive real
part on a multiply connected domain found in \cite{DP} \cite{BH} and
\cite{BH2} (see also \cite{H} \cite{DMrat}) also significantly
streamline the argument.  This proof for the annulus, with minor
modifications indicated in Subsection \ref{sec:z2=w2}, also
establishes rational dilation for the distinguished variety defined by
$z^2=w^2$.

\subsection{A Naimark Dilation Theorem}

The following version of the Naimark Dilation Theorem will be used to
reduce the extreme rays of functions of positive real part on an
annulus to a much smaller collection.

\begin{theorem}
  \label{thm:naimark}
  Fix positive integers $m,n$ and suppose that $A_1,\dots,A_m;
  B_1,\dots, B_m$ are rank one positive semidefinite $n\times n$
  matrices. If
  \begin{equation*}
    \sum A_j = I=\sum B_\ell,
  \end{equation*}
  then there exists an isometry $V:\mathbb C^n\to \mathbb C^m$ and
  $m\times m$ matrices $P_1,\dots,P_m; Q_1\dots, Q_m$ such that
  \begin{enumerate}[(i)]
  \item Each of $P_1,\dots, P_m;Q_1,\dots Q_m$ are rank one projections;
  \item 
    \begin{equation*} 
      \sum P_j = I =\sum Q_\ell;
    \end{equation*}
  \item and 
    \begin{equation*}
      A_j = V^* P_jV, \ \  B_\ell= V^* Q_\ell V.
    \end{equation*}
  \end{enumerate}
\end{theorem}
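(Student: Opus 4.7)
The plan is to take $V$ to be the analysis operator of the Parseval frame determined by the $A_j$'s, and then to manufacture the orthonormal basis yielding the $Q_\ell$'s by lifting the $b_\ell$'s into $\mathbb C^m$ and filling out the defect in $(\mathrm{range}\,V)^\perp$. Write $A_j=a_j a_j^*$ and $B_\ell=b_\ell b_\ell^*$ with $a_j,b_\ell\in\mathbb C^n$; rank one positivity makes such factorizations possible. The hypothesis $\sum_j a_j a_j^*=I_n$ says $\{a_j\}$ is a Parseval frame in $\mathbb C^n$, so the analysis operator $V:\mathbb C^n\to\mathbb C^m$ defined by $(Vx)_j=\langle x,a_j\rangle$ is an isometry satisfying $V^*e_j=a_j$, where $\{e_j\}_{j=1}^m$ denotes the standard basis of $\mathbb C^m$. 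Setting $P_j:=e_j e_j^*$ immediately yields rank one projections summing to $I_m$ with $V^* P_j V=a_j a_j^*=A_j$, disposing of the $P_j$ half of conditions (i)--(iii).

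It remains to find an orthonormal basis $\{q_\ell\}_{\ell=1}^m$ of $\mathbb C^m$ with $V^* q_\ell=b_\ell$; taking $Q_\ell:=q_\ell q_\ell^*$ then completes the construction. Decompose $\mathbb C^m=\mathrm{range}\,V\oplus(\mathrm{range}\,V)^\perp$. Since $V^*V=I_n$, the requirement $V^* q_\ell=b_\ell$ forces $q_\ell=Vb_\ell+c_\ell$ with $c_\ell\in(\mathrm{range}\,V)^\perp$, and the orthonormality condition $\langle q_j,q_k\rangle=\delta_{jk}$ reduces to $\langle c_j,c_k\rangle=\delta_{jk}-\langle b_j,b_k\rangle$. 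The task is thereby reduced to placing $m$ vectors $c_\ell$ in the $(m-n)$-dimensional space $(\mathrm{range}\,V)^\perp$ whose Gram matrix equals $I_m-B^*B$, where $B$ denotes the $n\times m$ matrix with columns $b_\ell$.

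The key point, which I expect to be the only substantive step, is that this Gram matrix is realizable in a space of the correct dimension. The second hypothesis $\sum_\ell b_\ell b_\ell^*=I_n$ amounts to $BB^*=I_n$, so $B$ is a coisometry and $I_m-B^*B$ is the orthogonal projection onto $\ker B$, a positive semidefinite $m\times m$ matrix of rank exactly $m-n$. Any such matrix factors as $C^*C$ with $C$ of size $(m-n)\times m$, and after fixing a unitary identification of $(\mathrm{range}\,V)^\perp$ with $\mathbb C^{m-n}$, placing the columns of $C$ in that space produces the desired $c_\ell$'s. The essential content is thus the dimensional match, namely that the defect space has precisely the dimension $m-n$ needed to absorb the Gram deficit of the second frame; the rest is bookkeeping.
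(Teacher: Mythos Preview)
Your proof is correct and essentially follows the paper's approach: the isometry $V$ and the projections $P_j=e_je_j^*$ are constructed identically. For the $Q_\ell$'s the paper proceeds slightly differently---it builds the analogous isometry $W$ from the $b_\ell$'s, observes that any two isometries $\mathbb C^n\to\mathbb C^m$ are intertwined by a unitary $U$ on $\mathbb C^m$ (since the ranges have equal codimension), and sets $Q_\ell=U^*P_\ell U$---but this is just a repackaging of your Gram-matrix completion in the defect space, resting on the same dimensional match $m-n$.
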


\begin{proof}
  Since the $A_j$ are rank one and positive semidefinite, there exists
  $a_j\in\mathbb C^n$ such that
  \begin{equation*}
    A_j = a_j a_j^*.
  \end{equation*}
  Let $V$ denote the matrix whose $j$-th row is $a_j^*$ (the $1\times
  n$) matrix. It follows that $V$ is an $m\times n$ matrix and
  moreover,
  \begin{equation*}
    V^* V = \sum a_j a_j^* = I.
  \end{equation*}
  Thus $V$ is an isometry.  Let $P_j = e_j e_j^*$, where $\{e_1,\dots,
  e_n\}$ is the standard orthonormal basis for $\mathbb C^n$ and note
  that
  \begin{equation*}
    V^* P_j V = a_j a_j^* = A_j.
  \end{equation*}

  The analogous construction with $B_j=b_jb_j^*$ produces an isometry
  $W:\mathbb R^n\to\mathbb R^m$ such that
  \begin{equation*}
    W^* P_\ell W = B_\ell.
  \end{equation*}
  
  Since $V$ and $W$ are isometries, the mapping $U:\mbox{range}(V) \to
  \mbox{range}(W)$ defined by $UVx= Wx$ is a unitary mapping. Since
  the codimensions of the range of $V$ and the range of $W$ are the
  same, $U$ can be extended to a unitary mapping on $\mathbb C^m$.
  Let $Q_\ell = U^*P_\ell U.$ Then each $Q_\ell$ is a rank one
  projection, the $Q_j$ sum to the identity and
  \begin{equation*}
    V^* Q_\ell V = V^* U^* P_\ell U V = W^* P_\ell W = B_\ell.
  \end{equation*}
\end{proof}

\subsection{Extremal Functions of Positive Real Part}

As a special case of the results in \cite{BH,BH2}, the matrix-valued
functions of positive real part on an annulus are characterized.

Fix $0<q<1$ and let $\mathbb A$ denote the annulus,
\begin{equation*}
  \mathbb A = \{z\in\mathbb C: q<|z|<1\},
\end{equation*}
with its boundary components
\begin{equation*}
  \partial_0 =\{|z|=1\}, \ \ \partial_1 =\{|z|=q\}.
\end{equation*}
Let $M_n$ denote the $n\times n$ matrices. An analytic function
$F:\mathbb A\to M_n$ whose real part,
\begin{equation*}
  \mathrm{Re}\,F(z) = \frac{F(z)+F(z)^*}{2}
\end{equation*}
takes positive definite values in $\mathbb A$ has an $n\times n$
matrix-valued measure $\mu_F$ on $\partial= \partial_0\cup \partial_1$
for its boundary values.  On the other hand, a positive semidefinite
$n\times n$ matrix-valued measure $\mu$ on $\partial$ is the boundary
values of a matrix-valued harmonic function $H$ on $\mathbb A$.
Moreover, $H$ is the real part of analytic function if and only if
\begin{equation*}
  \mu(\partial_0)=\mu(\partial_1).
\end{equation*}
By compressing to the range of $\mu(\partial_0)$, it can be assumed
that $\mu(\partial_0)$ has full rank.

Let $\Gamma_n$ denote the set of positive semidefinite $n\times n$
matrix-valued measures $\mu$ on $\partial$ such that $\mu(\partial_0)=
I= \mu(\partial_1)$.  The results of \cite{BH} and \cite{BH2} imply
that the extreme points of the set $\Gamma_n$ have the form,
\begin{equation}
  \label{eq:extrememu}
  \mu = \sum_{j=1}^m A_j \delta_{\alpha_j} + \sum_{\ell=1}^m B_\ell
  \delta_{\beta_\ell},
\end{equation}
where the $A_j$ and $B_\ell$ are rank one and positive semidefinite
$n\times n$ matrices; $\alpha_j$ are points on $\partial_1$ and the
$\beta_\ell$ are points on $\partial_0;$ and
\begin{equation*}
  \sum A_j = I =\sum B_\ell.
\end{equation*}
Repetition is allowed among the points $\alpha$ and $\beta$ to allow
for attaching arbitrary positive semidefinite matrices to a point on
$\partial$ and the zero matrix is allowed for some the $A$ or $B$ so
that it may be assumed, without loss of generality, that there are the
same number of $A_j$ as there are of the $B_\ell.$ It should be noted
that not every measure of the form in Equation \eqref{eq:extrememu} is
an extreme point (a characterization is given in \cite{BH,BH2}).

For a $\mu\in \Gamma_n,$ let $F_\mu$ denote a corresponding analytic
function of positive real part. Thus, the real part of $F_\mu$ is the
harmonic functions whose boundary values are $\mu$.  Such an $F$ is
not unique, but any two differ by a matrix $C$ which is skew
self-adjoint, $C^*= - C$. Note that the real part of $F$ is zero
except at the $n$ points (counting multiplicity) in the support of
$\mu$ on each of the components of $\partial.$

An operator $T$ has $\mathbb A$ as a spectral set if $\sigma(T)\subset
\mathbb A$ and $\|f(T)\|\le 1$ for each analytic function $f:\mathbb
A\to \mathbb D$.  Here $\mathbb D$ is the unit disc, $\{z\in\mathbb C:
|z|<1\}$ and $\sigma(T)$ is the spectrum of $T.$ The following
proposition is a consequence of the results of \cite{BH,BH2}.

\begin{theorem}
  \label{thm:BH}
  Let $T$ be a operator on the Hilbert space $H$ with
  $\sigma(T)\subset\mathbb A$, and suppose $\mathbb A$ is a spectral
  set for $T$.  Then there exists a normal operator $N$ acting on a
  Hilbert space $K$ with $\sigma(N)\subset \partial$ and an isometry
  $V:H\to K$ such that $r(T) = V^* r(N) V$ for all rational functions
  $r$ with poles off the closure of $\mathbb A$ if and only if
  \begin{equation*}
    \frac{F_\mu(T) + F_\mu(T)^*}{2} \succeq 0
  \end{equation*}
  for each $n$ and each $\mu$ as in Equation \eqref{eq:extrememu}.
\end{theorem}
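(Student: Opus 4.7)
The strategy is to realize Theorem~\ref{thm:BH} as an instance of Arveson's dilation paradigm, combined with the extreme-ray description of matrix-valued Herglotz classes on the annulus from \cite{BH,BH2}.  By Arveson's theorem, $T$ admits the required normal dilation $N$ with $\sigma(N)\subset\partial$ if and only if the unital representation $\pi_T:R(\overline{\mathbb{A}})\to B(H)$ given by $\pi_T(r)=r(T)$ is completely contractive; equivalently, after Cayley transform, $\pi_T$ extends to a unital completely positive (ucp) map on the operator system $\mathcal{S}=R(\overline{\mathbb{A}})+R(\overline{\mathbb{A}})^*$.  Complete positivity amounts to demanding $\mathrm{Re}\,F(T)\succeq 0$ for every $n$ and every $F\in M_n(R(\overline{\mathbb{A}}))$ with $\mathrm{Re}\,F\succeq 0$ on $\mathbb{A}$.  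Since $\sigma(T)\subset \mathbb{A}$ is strictly interior, the Riesz functional calculus makes $F_\mu(T)$ well-defined even when $F_\mu$ has poles on $\partial$, so the statement of the theorem is meaningful.

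For the forward implication, assume the dilation exists, so $\pi_T$ is ucp on $\mathcal{S}$.  Given an extreme $\mu\in\Gamma_n$ as in (\ref{eq:extrememu}), $F_\mu$ has poles on $\partial$ and so does not lie directly in $R(\overline{\mathbb{A}})$.  I would remedy this by approximation: replace $\mu$ by $\mu_\epsilon\in\Gamma_n$ obtained by convolving its atomic parts with a smooth approximate identity along the appropriate boundary circle.  Then $F_{\mu_\epsilon}$ extends continuously to $\overline{\mathbb{A}}$, lies in the uniform closure of $R(\overline{\mathbb{A}})$, and still satisfies $\mathrm{Re}\,F_{\mu_\epsilon}\succeq 0$ on $\mathbb{A}$, so complete positivity yields $\mathrm{Re}\,F_{\mu_\epsilon}(T)\succeq 0$.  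On any compact neighborhood of $\sigma(T)$ in $\mathbb{A}$, $F_{\mu_\epsilon}\to F_\mu$ uniformly (the Herglotz kernel is smooth away from $\partial$), so $F_{\mu_\epsilon}(T)\to F_\mu(T)$ in operator norm, yielding $\mathrm{Re}\,F_\mu(T)\succeq 0$.

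For the reverse implication, assume $\mathrm{Re}\,F_\mu(T)\succeq 0$ for each $n$ and each extreme $\mu\in\Gamma_n$.  To obtain the dilation it suffices, by Arveson, to verify complete positivity of $\pi_T$.  The set $\Gamma_n$ is convex and compact in the weak-$*$ topology on $M_n$-valued measures on $\partial$, and by \cite{BH,BH2} its extreme points are exactly the measures (\ref{eq:extrememu}).  Choquet's theorem then furnishes, for each $\mu\in\Gamma_n$, a probability measure $P_\mu$ on the extreme points with $\mu=\int \nu\,dP_\mu(\nu)$, so
\begin{equation*}
  F_\mu = iC + \int F_\nu\,dP_\mu(\nu)
\end{equation*}
for some skew-Hermitian $C$, and therefore $\mathrm{Re}\,F_\mu(T)=\int \mathrm{Re}\,F_\nu(T)\,dP_\mu(\nu)\succeq 0$ by hypothesis.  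Any $F\in M_n(R(\overline{\mathbb{A}}))$ with $\mathrm{Re}\,F\succeq 0$ on $\mathbb{A}$ is, after normalization, of the form $F_\mu+iD$ with $\mu\in\Gamma_n$ and $D$ skew-Hermitian, so $\mathrm{Re}\,F(T)\succeq 0$ follows.  Arveson-Stinespring then produces the required $N$.

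The main obstacle is the approximation step in the forward direction: the mollification must preserve both $\mathrm{Re}\,F_{\mu_\epsilon}\succeq 0$ on $\mathbb{A}$ and the normalization $\mu_\epsilon(\partial_0)=\mu_\epsilon(\partial_1)=I$ that ensures $F_{\mu_\epsilon}$ is the real part of a single-valued analytic function on the multiply connected annulus.  A secondary subtlety is justifying the Choquet integration at the level of the functions $F_\mu$: the map $\mu\mapsto F_\mu$ is affine and weak-$*$ continuous into the space of analytic functions on $\mathbb{A}$ with the compact-open topology, so the interchange of integral and functional calculus is routine but must be checked.  Once these technicalities are in place, the argument reduces cleanly to Arveson's paradigm applied to the extreme-ray description provided by \cite{BH,BH2}.
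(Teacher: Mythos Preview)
The paper does not actually prove Theorem~\ref{thm:BH}; it is stated without proof as ``a consequence of the results of \cite{BH,BH2}.''  Your proposal therefore cannot be compared against a proof in the paper proper.  That said, the paper does give a short proof of the analogous statement over the variety $z^2=w^2$ (Proposition~\ref{prop:V-just-check-extremals}), and your strategy matches it: Arveson's equivalence between normal boundary dilation and complete contractivity of $\pi_T$, combined with the Choquet integral representation over extreme points of $\Gamma_n$ from \cite{BH,BH2}, plus the Cayley transform to pass between positive-real-part and contractive normalizations.

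Your argument is essentially correct.  Two small remarks.  First, you write that the extreme points of $\Gamma_n$ are \emph{exactly} the measures of the form~\eqref{eq:extrememu}; the paper is careful to note that every extreme point has this form but not conversely.  This does not affect your proof, since the hypothesis is assumed for \emph{all} $\mu$ of the form~\eqref{eq:extrememu} and hence in particular for the genuine extreme points, which is all Choquet requires.  Second, your mollification in the forward direction is more honest than the paper's one-line ``by the spectral theorem'' in the proof of Proposition~\ref{prop:V-just-check-extremals}: since $F_\mu$ can have poles on $\partial$, one cannot simply evaluate $F_\mu(N)$, and your approximation by $F_{\mu_\epsilon}\in A(\mathbb{A})$ (convolution on each boundary circle preserves both positivity and the total-mass constraint $\mu_\epsilon(\partial_0)=\mu_\epsilon(\partial_1)=I$, as you note) is the right fix.
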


\begin{remark}\rm
  The first equivalent condition of the theorem says that $T$ has a
  rational dilation to a normal operator with spectrum in the boundary
  of $\mathbb A$.
\end{remark}

\subsection{Matrix Extreme Functions of Positive Real Part}

There is a particularly nice subset of the extreme points of
$\Gamma_n$ from which all the extreme points of $\Gamma_n$ can be
recovered in a canonical fashion.

Let $\mathcal E_n$ denote those elements $\nu$ of $\Gamma_n$ of the
form,
\begin{equation*}
  \nu = \sum_{j=1}^n  A_j \delta_{\alpha_j} + \sum_{\ell=1}^n B_\ell
  \delta_{\beta_\ell}.
\end{equation*}
In particular, the $A_j$ are rank one projections which sum to the
identity and likewise for the $B_\ell.$

\begin{lemma}
  \label{lem:perturb}
  Let 
  \begin{equation*}
    G= (F_\nu-I)(F_\nu +I)^{-1}.
  \end{equation*}
  For each $n\times n$ unitary matrix $U$ the function 
  \begin{equation*}
    \det(I-G(z)U)
  \end{equation*}
  has precisely $n$ zeros on each boundary component of $\partial$.
\end{lemma}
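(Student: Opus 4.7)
The plan is to realise $G = (F_\nu - I)(F_\nu + I)^{-1}$ as a matrix-valued inner function on the annulus whose determinant has winding number exactly $n$ on each component of $\partial$, and then convert this into the zero count of $\det(I - G(z)U)$ by a monotonicity-of-eigenvalues argument.

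First I would establish the boundary structure of $G$. Away from $\operatorname{supp}\nu$, the boundary value of $F_\nu$ is skew-Hermitian (since $\operatorname{Re}F_\nu$ has boundary measure $\nu$, supported only at the points $\alpha_j$ and $\beta_\ell$), so the Cayley identity $G^*G = I$ follows at once. At each support point, say $\alpha_j$, the function $F_\nu$ has a rank-one pole along $A_j = a_j a_j^*$; a Schur-complement calculation in the decomposition $\mathbb C^n = \operatorname{ran}A_j \oplus a_j^\perp$ shows that $G$ extends continuously with $G(\alpha_j)a_j = a_j$ and $G(\alpha_j)$ unitary. Because $\sum_j A_j = \sum_\ell B_\ell = I \succ 0$, the Poisson-type integral giving $\operatorname{Re}F_\nu$ is strictly positive definite on $\mathbb A$, so $\|G(z)\| < 1$ for every $z\in\mathbb A$; consequently $\det(I - G(z)U)\ne 0$ in $\mathbb A$ and every zero lies on $\partial$.

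Next I would show that the winding of $\det G$ on each boundary component is exactly $n$. Parameterising $\partial_i$ by $\theta$ and writing $F_\nu = iK(\theta)$ for Hermitian $K$, a short computation starting from $G = I - 2(I+iK)^{-1}$ gives
\begin{equation*}
    -i\,G^{-1}\frac{dG}{d\theta} \;=\; -2\,(I-iK)^{-1}K'(\theta)(I+iK)^{-1},
\end{equation*}
which is positive semidefinite provided $K'(\theta)\preceq 0$. That sign is the annular analogue of the disc identity $\frac{d}{ds}\cot((\theta - s)/2) = -\tfrac12\csc^2((\theta - s)/2) \le 0$, obtained by differentiating the Poisson kernel for the annulus against the positive measure $\nu$. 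A scalar model computation at each support point then shows that the eigenvalue of $G$ in the pole direction ($a_j$ or $b_\ell$) traces out one full counterclockwise loop of $S^1$ across that point while the transverse eigenvalues remain continuous, so the total winding of $\det G$ over $\partial_i$ is exactly the number of support points on $\partial_i$, namely $n$.

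Finally, since $G(z)U$ is unitary on $\partial$ its eigenvalues $\lambda_1(z),\ldots,\lambda_n(z)$ are unimodular, and the matrix inequality
\begin{equation*}
    -i\,(GU)^{-1}(GU)' \;=\; U^*\bigl(-iG^{-1}G'\bigr)U \;\succeq\; 0
\end{equation*}
forces every eigenvalue to rotate counterclockwise as $z$ traverses $\partial_i$ in its positive direction. Each eigenvalue therefore crosses $1$ precisely as many times as it winds, and summing gives $\sum_k \operatorname{wind}(\lambda_k) = \operatorname{wind}(\det(GU)) = \operatorname{wind}(\det G) = n$, so $\det(I - G(z)U) = \prod_k(1-\lambda_k(z))$ has precisely $n$ zeros (with multiplicity) on each $\partial_i$; eigenvalue collisions are handled by a small perturbation of $U$ and continuity of the zero count. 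The main obstacle I expect is verifying $K'(\theta)\preceq 0$: this is the annular-kernel analogue of the elementary cotangent identity used on the disc and requires a direct computation with the Poisson kernel of $\mathbb A$ rather than being an immediate corollary.
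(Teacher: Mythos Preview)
Your approach is genuinely different from the paper's and considerably more laborious. The paper bypasses all winding-number and monotonicity analysis with a short topological argument: since $\|G(z)\|<1$ on $\mathbb A$, every zero of $\det(I-G(z)U)$ lies on $\partial$, and since $G$ is unitary-valued on a cofinite subset of $\partial$ it extends analytically across $\partial$ by reflection, so the number of boundary zeros is stable under small perturbations of $U$. The sets
\[
\mathcal U_k=\{U\ \text{unitary}:\det(I-G(\cdot)U)\ \text{has }k\text{ zeros on }\partial_0\}
\]
are therefore open and partition the connected unitary group; since $I\in\mathcal U_n$ (the zeros of $\det(I-G)$ on each component being exactly the $n$ rank-one support points of $\nu$), one concludes $\mathcal U=\mathcal U_n$ at once. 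No Poisson kernels, no eigenvalue tracking.

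The step you flag as the main obstacle, $K'(\theta)\preceq 0$, is not just a computation but a genuine difficulty on the annulus. On $\partial_0$ the imaginary part $K$ receives, besides the pole-type contributions from the $\alpha_j\in\partial_0$, \emph{smooth periodic} contributions from the support points $\beta_\ell\in\partial_1$; a nonconstant smooth periodic function has derivative of both signs, so your inequality amounts to showing that the same-boundary singular terms always dominate the cross-boundary regular ones. In the scalar case this is equivalent to the argument of the inner function $(f-1)(f+1)^{-1}$ being monotone on each boundary circle of the annulus---true, but a property of the annular Schwarz (Villat) kernel rather than a consequence of the disk cotangent identity you cite. Even granting it, your argument is tied to explicit kernel estimates for the domain at hand, whereas the paper's connectivity argument transfers verbatim to the variety $z^2=w^2$ treated later in the section.
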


\begin{proof}
  Because $F$ has positive real part, $G$ is contractive-valued in
  $\mathbb A$. Further, as the real part of $F$ is $0$, except on a
  finite subset of $\partial$, the function $G$ is unitary-valued on a
  cofinite subset of $\partial$ and hence extends by reflection
  principle to a function analytic in the neighborhood of the closure
  of $\mathbb A$.

  On the other hand,
  \begin{equation*}
    F_\nu =  (I+G)(I-G)^{-1}.
  \end{equation*} 
  Thus the real part of $F_\nu$ is zero at $z$ unless $1$ is in the
  spectrum of $G(z)$. Hence, $\det(I-G(z))$ has exactly $n$ zeros
  (counting) multiplicity on each boundary component of $\mathbb A.$

  Let $\mathcal U$ denote the collection of $n\times n$ unitary
  matrices.  Let
  \begin{equation*}
    \mathcal U_k =\{U\in\mathcal U: \det(I-G(z)U) \mbox{ has $k$ zeros
      on } \partial_0\}.
  \end{equation*}
  Note that as the only zeros of $\det(I-G(z)U)$ can occur on the
  boundary, this number of zeros is stable with respect to small
  perturbations of $U$. Thus, $\mathcal U_k$ is open. But also
  $\mathcal U = \cup \mathcal U_k$ is compact, hence this union is
  finite.  Since $\mathcal U_n$ is not empty and $\mathcal U$ is
  connected, it follows that $\mathcal U=\mathcal U_n$.
\end{proof}

\subsection{Rational Dilation on $\mathbb A$}
 
\begin{theorem}[\cite{Agler,M}]
  \label{thm:annulus-dilation}
  If the operator $T$ has the annulus as a spectral set, then $T$ has
  a normal dilation to an operator with spectrum in the boundary of
  $\mathbb A$.
\end{theorem}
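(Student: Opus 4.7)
The plan is to verify the second equivalent condition of Theorem~\ref{thm:BH}, namely that $\mathrm{Re}\, F_\mu(T) \succeq 0$ for every $n$ and every extremal measure $\mu$ of the form~\eqref{eq:extrememu}. First I would reduce to the matrix-extreme class $\mathcal{E}_n$ via Theorem~\ref{thm:naimark}. Given an extremal $\mu \in \Gamma_n$ with rank-one positive semidefinite weights $A_1,\dots,A_m,B_1,\dots,B_m$, Theorem~\ref{thm:naimark} supplies an isometry $V \colon \mathbb{C}^n \to \mathbb{C}^m$ and rank-one projections $P_1,\dots,P_m,Q_1,\dots,Q_m$ summing to $I_m$ with $A_j = V^* P_j V$ and $B_\ell = V^* Q_\ell V$. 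The lifted measure
\[
\nu = \sum_{j=1}^m P_j\,\delta_{\alpha_j} + \sum_{\ell=1}^m Q_\ell\,\delta_{\beta_\ell}
\]
lies in $\mathcal{E}_m$ and satisfies $V^* \nu V = \mu$, so the associated Herglotz-type functions satisfy $F_\mu = V^* F_\nu V + C$ for some skew-adjoint matrix $C$. Setting $\widetilde V = I_H \otimes V \colon H \otimes \mathbb{C}^n \to H \otimes \mathbb{C}^m$, the entrywise functional calculus gives $\mathrm{Re}\, F_\mu(T) = \widetilde V^* \,\mathrm{Re}\, F_\nu(T)\, \widetilde V$ (the constant $C$ drops from the real part). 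Thus it suffices to handle $\nu \in \mathcal{E}_n$.

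For $\nu \in \mathcal{E}_n$, put $G = (F_\nu - I)(F_\nu + I)^{-1}$. As in the proof of Lemma~\ref{lem:perturb}, $G$ extends analytically to a neighborhood of $\overline{\mathbb{A}}$, is unitary-valued on $\partial\mathbb{A}$, and satisfies $\|G(z)\| \le 1$ on $\mathbb{A}$. For $0 < t < 1$, define $F_t = (I + tG)(I - tG)^{-1}$; this is analytic on a neighborhood of $\overline{\mathbb{A}}$ and converges locally uniformly in $\mathbb{A}$ to $F_\nu$ as $t \uparrow 1$. The Cayley identity
\[
\mathrm{Re}\, F_t(T) = \bigl(I - tG(T)^*\bigr)^{-1}\bigl(I - t^2 G(T)^*G(T)\bigr)\bigl(I - tG(T)\bigr)^{-1}
\]
reduces positivity of $\mathrm{Re}\, F_t(T)$ to the matrix-norm estimate $\|tG(T)\| \le 1$, which would follow from $\|G(T)\| \le 1$; taking $t \uparrow 1$ would then yield $\mathrm{Re}\, F_\nu(T) \succeq 0$.

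The main obstacle is establishing the matrix-norm bound $\|G(T)\|\le 1$ from only the scalar spectral set hypothesis on $T$. The plan is to exploit the rigidity furnished by Lemma~\ref{lem:perturb}: the scalar functions $\det(I - G(z)U)$, as $U$ ranges over $n \times n$ unitaries, each have exactly $n$ zeros on each boundary component of $\partial\mathbb{A}$. This parametrized family should permit a factorization of $G$ (or of a suitable unitary translate $GU$) into $n$ elementary matrix-inner factors on $\mathbb{A}$, reducing the desired operator bound to iterated estimates of the form $\|r(T)\| \le \|r\|_\infty$ for scalar rational $r$ with poles off $\overline{\mathbb{A}}$---precisely what the spectral set hypothesis provides. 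Once $\|G(T)\| \le 1$ is in hand, the limiting argument above produces $\mathrm{Re}\, F_\nu(T) \succeq 0$ for each $\nu \in \mathcal{E}_n$; combined with the Naimark reduction, this verifies the hypothesis of Theorem~\ref{thm:BH} and yields the required normal dilation of $T$ to an operator with spectrum in $\partial\mathbb{A}$.
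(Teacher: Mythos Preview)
Your Naimark reduction to $\nu\in\mathcal E_m$ and the passage to the Cayley transform $G=(F_\nu-I)(F_\nu+I)^{-1}$ match the paper exactly, and you have correctly isolated the crux: one must show $\|G(T)\|\le 1$ using only the scalar spectral-set hypothesis. The gap is in the mechanism you propose for this last step. A product factorization of $G$ (or of $GU$) into ``$n$ elementary matrix-inner factors'' does not reduce the problem to scalar estimates: even a degree-one matrix inner function on $\mathbb A$ is genuinely matrix-valued, so bounding each factor at $T$ is the same problem you started with, and in any case no such factorization theorem for matrix inner functions on the annulus is on offer. Lemma~\ref{lem:perturb} by itself only counts zeros; it does not produce a product decomposition.

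The paper's key move is an \emph{additive} decomposition on the Herglotz side, enabled by a very specific choice of unitary. Take $U=G(1)^*$, so that $\tilde G=GU$ satisfies $\tilde G(1)=I$. By Lemma~\ref{lem:perturb}, $\det(I-\tilde G(z))$ still has exactly $n$ zeros on each boundary circle, and now all $n$ zeros on $\partial_0$ sit at the single point $z=1$. Hence the boundary measure of $\tilde F=(I+\tilde G)(I-\tilde G)^{-1}$ has the form
\[
\tilde\mu=\sum_{j=1}^n \tilde A_j\,\delta_{\gamma_j}+\Bigl(\sum_{j=1}^n\tilde A_j\Bigr)\delta_1
=\sum_{j=1}^n \tilde A_j\,(\delta_{\gamma_j}+\delta_1),
\]
with each $\tilde A_j\succeq 0$ of rank one. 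Each scalar measure $\delta_{\gamma_j}+\delta_1$ places unit mass on each boundary component and so is the boundary data of a \emph{scalar} Herglotz function $\psi_j$ on $\mathbb A$; the spectral-set hypothesis gives $\mathrm{Re}\,\psi_j(T)\succeq 0$ directly. Since $\tilde F$ and $\sum_j\psi_j\,\tilde A_j$ differ only by a skew-adjoint constant,
\[
\mathrm{Re}\,\tilde F(T)=\sum_{j=1}^n(\mathrm{Re}\,\psi_j(T))\otimes\tilde A_j\succeq 0,
\]
whence $\|\tilde G(T)\|\le 1$, hence $\|G(T)\|\le 1$, and $\mathrm{Re}\,F_\nu(T)\succeq 0$ follows. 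The missing idea, then, is not a product factorization of $G$ but this choice of $U$ that concentrates all the $\partial_0$-support of $\tilde\mu$ at a single point, which is precisely what allows the matrix Herglotz function to be written as a positive combination of scalar ones.
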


\begin{proof}
  It suffices to verify the second of the equivalent conditions in
  Theorem \ref{thm:BH}.  Accordingly, let such an $F_\mu$ be given.
  By Theorem \ref{thm:naimark}, there is an $m$, an isometry
  $V:\mathbb C^n\to\mathbb C^m$ and rank one projections $P_j$ and
  $Q_\ell$ as described in that theorem so that
  \begin{equation*}
    V^* P_jV = A_j, \ \  V^* Q_\ell V = B_\ell.
  \end{equation*}

  Consider the measure 
  \begin{equation*}
    \nu =\sum P_j \delta_{\alpha_j} + \sum Q_\ell \delta_{\beta_\ell}.
  \end{equation*}
  Because the $P_j$ and $Q_\ell$ each sum to the identity, it follows
  that there is an $m\times m$ matrix-valued analytic function $G$ of
  positive real part whose boundary values are the measure $\mu$.
  Further, since
  \begin{equation*}
    V^* \nu V = \mu,
  \end{equation*}
  if $\mathrm{Re}\,G(T)\succeq 0$, then also $\mathrm{Re}\,F(T)\succeq
  0$. Hence it suffices to prove $\mathrm{Re}\,G(T)\succeq 0$ under
  the assumption that $\mathbb A$ is a spectral set for $T.$

  Let
  \begin{equation*}
    G = (F_\nu -I)(F_\nu +I)^{-1}.
  \end{equation*}
  Thus $\Phi$ is a contractive analytic function in $\mathbb A$.  By
  Lemma \ref{lem:perturb}, with $U=G(1)^*$ and $\tilde{G}=GU$, the
  function $\det(I-G(z)U)$ has exactly $n$ zeros on each of
  $\partial_0$ and $\partial_1$. Let
  \begin{equation*}
    \tilde{F} = (I+\tilde{G})(I-\tilde{G})^{-1}.
  \end{equation*}
  Thus $\tilde{F}$ has positive real part and moreover its boundary
  values determine a measure $\tilde{\mu}$ with support at exactly
  (counting multiplicity) $n$ points on each boundary component. On
  the other hand, the choice of $U$ implies that
  \begin{equation*}
    \tilde{\mu} = \sum_{j=1}^n \tilde{A_j}\delta_{\gamma_j} +
    \tilde{B} \delta_1,
  \end{equation*}
  where the $\tilde{A_j}$ are rank one positive semidefinite $n\times
  n$ matrices and
  \begin{equation*}
    \tilde{B}=\sum \tilde{A_j} \succ 0.
  \end{equation*}
 
  Thus,
  \begin{equation*}
    \mathrm{Re}\,\tilde{F} = \sum_{j=1}^n
    \tilde{A_j}(\delta_{\gamma_j} +\delta_1)
  \end{equation*}
  on $\partial \mathbb A$.

  Consider now the scalar measures $\delta_{\gamma_j}+\delta_1$.  Each
  of these measure puts unit mass on each boundary component
  $\partial_0, \partial_1$, and hence for each $j$ there is a
  holomorphic function $\psi_j$ of positive real part in $\mathbb A$
  such that
  \begin{equation*}
    \mathrm{Re}\,\psi_j = \delta_{\gamma_j}+\delta_1
  \end{equation*}
  on $\partial \mathbb A$.  By the assumption that $\mathbb A$ is a
  spectral set for $T$, we have $\mathrm{Re}\,\psi_j(T)\geq 0$ for
  each $j$.

  Now let
  \begin{equation*}
    \Psi = \sum \psi_j \tilde{A_j}.
  \end{equation*}

  It follows that $\Psi$ and $\tilde{F}$ agree up to a skew symmetric
  matrix. Thus,
  \begin{equation*}
    \mathrm{Re}\,{\tilde{F}}(T) = \mathrm{Re}\,{\Psi}(T)=\sum
    (\mathrm{Re}\,\psi_j(T))\otimes \tilde{A_j} \succeq 0.
  \end{equation*}
  Hence,
  \begin{equation*}
    \|\tilde{G}(T)\|\le 1; 
  \end{equation*}
  and thus
  \begin{equation*}
    \|G(T)\|\le 1; 
  \end{equation*}
  and so finally,
  \begin{equation*}
    \mathrm{Re}\,F(T) \succeq 0,
  \end{equation*}
  and the proof is complete.  
\end{proof}

\subsection{The variety $z^2=w^2$} 
\label{sec:z2=w2}

In this section we consider the hypo-Dirichlet algebra $\mathcal
A(\mathcal V)$ of functions which are analytic on the variety
$\mathcal V=\{(z,w)\in \mathbb D^2:z^2=w^2\}$ in the bidisk and
continuous on its boundary $\partial \mathcal V$.  The boundary of
$\mathcal V$ consists of the two disjoint circles $z=w,$ and $z=-w$
with $|z|=|w|=1$.  The proof of rational dilation for the annulus
given above may be straightforwardly modified to prove the following
theorem:

\begin{theorem}\label{thm:V-dilation}
  Every contractive homomorphism $\pi:\mathcal A(\mathcal V)\to B(H)$
  is completely contractive.
\end{theorem}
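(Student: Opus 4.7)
The plan is to adapt the proof of Theorem~\ref{thm:annulus-dilation} to $\mathcal V$, exploiting the tight structural parallel between the annulus and the double-disk variety. First observe that $\mathcal V$ decomposes as the union of two disks meeting transversely at the origin, so $\mathcal A(\mathcal V)$ is naturally identified with the subalgebra of $\AD\oplus\AD$ consisting of pairs $(f_+,f_-)$ with $f_+(0)=f_-(0)$, where $f_\pm(z)=f(z,\pm z)$. The boundary $\partial\mathcal V$ is a disjoint union of two circles $\partial_\pm$, and applying the Herglotz representation on each disk shows that an analytic $F:\mathcal V\to M_n$ with positive real part is parametrized by a pair of positive matrix-valued measures $(\mu_+,\mu_-)$ on $(\partial_+,\partial_-)$ subject to the single constraint $\mu_+(\partial_+)=\mu_-(\partial_-)$, forced by $F_+(0)=F_-(0)$ after normalizing the imaginary part. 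This is precisely the annulus condition $\mu(\partial_0)=\mu(\partial_1)$.

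With this dictionary in place, I would first establish the analog of Theorem~\ref{thm:BH} for $\mathcal V$: $\pi$ extends to a normal $\partial\mathcal V$-dilation if and only if $\mathrm{Re}\,F_\mu(\pi)\succeq 0$ for every extreme $\mu$ in the analogue of $\Gamma_n$. Since the linear constraint on $\Gamma_n$ is the same as for the annulus, the extreme-point analysis of \cite{BH,BH2} transports directly, producing extreme measures of the form in Equation~\eqref{eq:extrememu} with $\alpha_j\in\partial_+$ and $\beta_\ell\in\partial_-$. Applying Theorem~\ref{thm:naimark} without change replaces the rank-one matrices $A_j,B_\ell$ by rank-one projections $P_j,Q_\ell$ at the cost of an isometric embedding, reducing to the case $\mu\in\mathcal E_n$.

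The next step is the analog of Lemma~\ref{lem:perturb}: for $G=(F_\nu-I)(F_\nu+I)^{-1}$ and an arbitrary $n\times n$ unitary $U$, the function $\det(I-G(z)U)$ has exactly $n$ zeros on each of $\partial_+$ and $\partial_-$. The same proof applies: $G$ is contractive on $\mathcal V$ and unitary on $\partial\mathcal V$ off a finite set (and extends across each boundary circle by reflection disk-by-disk), zero counts are locally constant under perturbations of $U$ since zeros can only occur on the boundary, and the collection of unitaries yielding a given zero count on $\partial_\pm$ partitions the connected compact group into finitely many open sets. Choosing $U=G(1)^*$ for a chosen base point on $\partial_-$ (say) and setting $\tilde F=(I+GU)(I-GU)^{-1}$ produces a positive-real-part function whose boundary measure takes the form $\sum_{j=1}^n \tilde A_j(\delta_{\gamma_j^+}+\delta_{1})$, with $\tilde A_j$ rank one positive semidefinite. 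Finally, for each $j$ the scalar measure $\delta_{\gamma_j^+}+\delta_1$ is the boundary data of a scalar analytic $\psi_j$ on $\mathcal V$ with positive real part (build it disk-by-disk with a single unit mass on each component; the equal-mass condition and imaginary constant at the origin are matched by elementary adjustment). Contractivity of $\pi$ applied to the Cayley transform of $\psi_j$ gives $\mathrm{Re}\,\psi_j(\pi)\succeq 0$; writing $\Psi=\sum\psi_j\tilde A_j$, which agrees with $\tilde F$ up to a skew-adjoint constant, yields $\mathrm{Re}\,\tilde F(\pi)=\sum\mathrm{Re}\,\psi_j(\pi)\otimes\tilde A_j\succeq 0$, and unwinding the substitutions delivers $\mathrm{Re}\,F_\mu(\pi)\succeq 0$.

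The main obstacle I expect is the careful bookkeeping at the node $(0,0)\in\mathcal V$: both the extreme-point classification and the perturbation lemma rely implicitly on the fact that the relevant rational matrix functions extend to honest analytic objects near the singular point and that ``boundary zero of $\det(I-GU)$'' continues to make unambiguous sense. Because $\mathcal V$ is reducible with a single transverse node (rather than a cusp), every object splits as two independent disk-objects coupled by a rank-one linear condition at the origin, so the classical disk theory can be applied component-wise and the coupling contributes only a finite-dimensional obstruction which is absorbed by the equal-mass normalization. Once this is verified, the proof of Theorem~\ref{thm:annulus-dilation} goes through essentially verbatim.
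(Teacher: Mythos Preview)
Your proposal is correct and follows essentially the same route as the paper: reduce to checking $\mathrm{Re}\,F_\mu(\pi)\succeq 0$ for the finitely-supported extreme measures via the Ball--Guerra Huam\'an analysis, apply the Naimark dilation (Theorem~\ref{thm:naimark}) to pass to $\mathcal E_n$, invoke the $\mathcal V$-analog of Lemma~\ref{lem:perturb} (with reflection carried out sheet-by-sheet in the planes $z\pm w=0$), and then decompose into scalar positive-real-part functions exactly as in the annulus proof. The one step the paper makes explicit that you gloss over is the preliminary reduction $S,T\mapsto rS,rT$ for $r<1$, which is needed so that the joint spectrum lies strictly in $\mathcal V$ and the functional calculus $F_\mu(S,T)$ is defined; otherwise your outline matches the paper's.
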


The algebra $\mathcal A(\mathcal V)$ is in some sense a limiting case
of the annulus algebras. Indeed for a fixed real number $0<t<1$, the
variety in $\mathbb D^2$ defined by
\begin{equation}
  z^2 = \frac{w^2-t^2}{1-t^2w^2}  
\end{equation}
is an open Riemann surface which is topologically an annulus
\cite{Rudin}, and in fact by varying $t$ every annulus $\mathbb A_q$
is conformally equivalent to one of these \cite{Bell,Bell2}.  The
variety $\mathcal V$ is of course the limiting case $t\to 0$.

To get started we record some basic facts about $\mathcal V$ and
$\mathcal A(\mathcal V)$.  To fix some notation: $\mathcal V$ is the
union of the two sheets
\begin{equation}
  \mathcal V_+=\{(z,w):z=w\}, \quad \mathcal V_-=\{(z,w):z=-w\},
\end{equation}
which can each be identified with unit disk $\mathbb D$ via the
parametrization $\psi_+(t)=(t,t)$ and $\psi_-(t)=(t,-t)$
respectively.  The sheets $\mathcal V_\pm$ intersect only at the
origin, and the boundary of $\mathcal V\cap\mathbb D^2$ is the
disjoint union of the circles $\partial \mathcal V_+,$ and $\partial
\mathcal V_-$.  We equip each of these circles with normalized
Lebesgue measure (that is, the push-forward of Lebesgue measure under
the maps $\psi_{\pm}$).  We also recall that, by definition, a (scalar
or matrix valued) function $F$ is holomorphic on the variety $\mathcal
V$ if and only if for each point $(z,w)\in\mathcal V$, there is a
neighborhood $\Omega$ of this point in $\mathbb C^2$ such that $F$
extends to be holomorphic in $\Omega$.  $\mathcal A(\mathcal V)$ is
then the algebra of functions holomorphic on $\mathcal V$ and
continuous on $\mathcal V\cup \partial \mathcal V$, equipped with the
supremum norm, which we denote $\|f\|_{\mathcal V}$.

Given any function $F$ on $\mathcal V$, we let $F_\pm$ denote its
restrictions to the disks $\mathcal V_\pm$.  In particular, if $F$ is
holomorphic on $\mathcal V,$ then $H_\pm(t):=F_\pm(\psi_\pm(t))$ are
holomorphic functions on the disk, and $H_+(0)=H_-(0)$.  The converse
is also true:

\begin{lemma}
  \label{lem:V-extension}
  Given any pair of holomorphic functions $H_\pm:\mathbb D\to
  M_n(\mathbb C)$ with $H_+(0)=H_-(0)$, there exists a holomorphic
  function $F:\mathcal V\to M_n(\mathbb C)$ such that $F_\pm\circ
  \psi_\pm =H_\pm$.
\end{lemma}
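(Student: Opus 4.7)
The plan is to exploit the fact that the coordinate ring of $\mathcal V$ is a free module of rank two over $\mathbb C[z]$, with basis $\{1, w\}$, since $w^2$ can always be replaced by $z^2$ on $\mathcal V$. This suggests the ansatz $F(z,w) = A(z) + w B(z)$ for holomorphic $A, B : \mathbb D \to M_n(\mathbb C)$. If we can produce such $A, B$ for which the restrictions to the two sheets agree with $H_\pm$, then $F$ will be holomorphic on all of $\mathbb D \times \mathbb C$, and hence a fortiori will admit a holomorphic $\mathbb C^2$-extension on a neighborhood of every point of $\mathcal V$.

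Substituting $(z,w) = (t, \pm t)$ into the ansatz yields the linear system
\begin{align*}
 A(t) + t B(t) &= H_+(t), \\
 A(t) - t B(t) &= H_-(t),
\end{align*}
which we solve formally to obtain
\begin{equation*}
 A(t) = \frac{H_+(t) + H_-(t)}{2}, \qquad B(t) = \frac{H_+(t) - H_-(t)}{2t}.
\end{equation*}
Clearly $A$ is holomorphic on $\mathbb D$. The hypothesis $H_+(0) = H_-(0)$ is used precisely here: the numerator of $B$ has a zero at $t=0$, so by the Riemann removable singularity theorem (applied entrywise), $B$ extends to a holomorphic function on all of $\mathbb D$.

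With $A$ and $B$ in hand, define $F(z,w) := A(z) + w B(z)$. This is holomorphic on $\mathbb D \times \mathbb C$, and a direct computation verifies $F(t,t) = A(t) + t B(t) = H_+(t)$ and $F(t,-t) = A(t) - t B(t) = H_-(t)$, so $F_\pm \circ \psi_\pm = H_\pm$. To conclude that $F|_{\mathcal V}$ is holomorphic in the sense of the paper, observe that for each point $p \in \mathcal V$ the open set $\mathbb D \times \mathbb C$ is a neighborhood of $p$ in $\mathbb C^2$ to which $F$ extends holomorphically.

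The only subtle point is the behavior at the singular point $(0,0)$, where the two sheets $\mathcal V_\pm$ cross; this is the main obstacle and is exactly where the compatibility hypothesis $H_+(0) = H_-(0)$ is needed. Away from the origin, the sheets are disjoint and one could patch local $\mathbb C^2$-extensions together trivially, but at $(0,0)$ one needs a single holomorphic extension that sees both sheets, and the construction above supplies it explicitly.
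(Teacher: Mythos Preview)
Your proof is correct. Both you and the paper produce an explicit holomorphic extension to a neighborhood of $\mathcal V$ in $\mathbb C^2$ and check it restricts correctly to the two sheets, but the formulas differ. The paper first reduces to $H_\pm(0)=0$ and then sets
\[
F(z,w) = (1-(z-w))\,H_+\!\left(\tfrac{z+w}{2}\right) + (1-(z+w))\,H_-\!\left(\tfrac{z-w}{2}\right),
\]
which is holomorphic on $\mathbb D^2$; the coordinates $\tfrac{z\pm w}{2}$ are the natural parameters on the two disks $\mathcal V_\pm$. Your ansatz $F(z,w)=A(z)+wB(z)$ instead exploits the fact that $\mathbb C[\mathcal V]$ is free of rank two over $\mathbb C[z]$, and the compatibility condition $H_+(0)=H_-(0)$ enters cleanly as the removability of the singularity of $B$ at $0$. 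Your extension lives on $\mathbb D\times\mathbb C$ and is affine in $w$, which is a bit more structured; the paper's is self-contained without the module-theoretic preamble. Either way the argument is a one-line verification once the formula is written down.
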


\begin{proof}
  It suffices to assume $H_\pm(0)=0$, in which case the function
  \begin{equation}
    F(z,w) = (1-(z-w))H_+\left(\frac{z+w}{2}\right) +
    (1-(z+w))H_-\left(\frac{z-w}{2}\right)
  \end{equation}
  is holomorphic in $\mathbb D^2$ and restricts to $H_{\pm}$ on
  $\mathcal V_{\pm}$.
\end{proof}

Since polynomials are dense in the disk algebra $\mathcal A(\mathbb
D)$, an immediate consequence is that polynomials in $z,w$ are dense
in $\mathcal A(\mathcal V)$. It is also evident that $\|F\|_{\mathcal
  V}=\max(\|H_+\|_\infty,\|H_-\|_\infty)$, and that $\mathcal
A(\mathcal V)$ is a uniform algebra with Shilov boundary $\partial
\mathcal V$.

\begin{proposition}
  \label{prop:V-herglotz}
  Let $\mu$ be a finite, nonnegative $n\times n$ matrix valued measure
  on $\partial\mathcal V$.  Then there is a function $F\in
  M_n(Hol(\mathcal V))$ such that $\mu=\text{Re }F$ on $\partial V$ if
  and only if $\mu(\partial \mathcal V_+)=\mu(\partial\mathcal V_-)$.
\end{proposition}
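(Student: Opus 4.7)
The plan is to reduce both implications to the classical Herglotz--Riesz representation on the disk, exploiting the decomposition of $\mathcal V$ as the union of the two sheets $\mathcal V_\pm$ meeting only at the origin, with boundary the disjoint union $\partial\mathcal V_+ \sqcup \partial\mathcal V_-$.

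For the ``only if'' direction, let $F \in M_n(Hol(\mathcal V))$ have boundary measure $\mathrm{Re}\,F = \mu$. Set $H_\pm := F_\pm\circ\psi_\pm$; these are holomorphic $M_n(\mathbb C)$-valued functions on $\mathbb D$, and because the sheets meet at the origin we have $H_+(0) = F(0,0) = H_-(0)$. Since $\partial\mathcal V$ is the \emph{disjoint} union of the two boundary circles, the boundary measure of $\mathrm{Re}\,H_\pm$ on $\partial\mathbb D$ (pulled back via $\psi_\pm$) is exactly the restriction $\mu_\pm := \mu|_{\partial\mathcal V_\pm}$. The Poisson mean-value property on the disk then gives $\mathrm{Re}\,H_\pm(0) = \mu_\pm(\partial\mathbb D) = \mu(\partial\mathcal V_\pm)$, so taking real parts in $H_+(0) = H_-(0)$ yields the required equality of total masses.

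For the converse, assume $\mu(\partial\mathcal V_+) = \mu(\partial\mathcal V_-)$. Let $\mu_\pm$ denote the pullbacks of $\mu|_{\partial\mathcal V_\pm}$ to $\partial\mathbb D$ via $\psi_\pm$, and form the matrix Herglotz integrals
\begin{equation*}
 H_\pm(t) = \int_{\partial\mathbb D} \frac{\zeta + t}{\zeta - t}\, d\mu_\pm(\zeta).
\end{equation*}
Each $H_\pm$ is holomorphic on $\mathbb D$ with positive semidefinite real part, and $H_\pm(0) = \mu_\pm(\partial\mathbb D)$. The hypothesis yields $H_+(0) = H_-(0)$, so Lemma~\ref{lem:V-extension} produces $F \in M_n(Hol(\mathcal V))$ with $F_\pm\circ\psi_\pm = H_\pm$. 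Since the distributional boundary values of $\mathrm{Re}\,H_\pm$ on $\partial\mathbb D$ recover $\mu_\pm$, reassembling on the two sheets gives $\mathrm{Re}\,F = \mu$ on $\partial\mathcal V$.

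The main subtlety is the interpretation of the boundary identity ``$\mu = \mathrm{Re}\,F$'' when $\mu$ has a singular part; this is handled by standard Herglotz--Riesz theory for positive matrix-valued measures, applied independently on each of the two disjoint boundary circles. The genuinely new ingredient beyond the disk case is Lemma~\ref{lem:V-extension}, which translates the compatibility requirement $H_+(0) = H_-(0)$ into precisely the equal-mass condition $\mu(\partial\mathcal V_+) = \mu(\partial\mathcal V_-)$, making clear the analogy with the annulus case treated in Subsection~\ref{sec:ratforA}.
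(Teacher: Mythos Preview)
Your proof is correct and follows essentially the same approach as the paper: reduce to the disk on each sheet via the restrictions $H_\pm$, use the Herglotz representation there, and patch via Lemma~\ref{lem:V-extension}, with the equal-mass condition arising from the common value $H_+(0)=H_-(0)$. Your version is in fact slightly more explicit (writing out the Herglotz kernel and taking real parts to extract the mass condition), whereas the paper simply states that necessity is evident from $\mu(\partial\mathcal V_\pm)=F_\pm(0)$ and for sufficiency chooses $H_\pm$ real-valued at the origin.
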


\begin{proof}
  This is more or less immediate from the foregoing description of the
  holomorphic functions on $\mathcal V$; indeed the necessity of the
  condition $\mu(\partial\mathcal V_+)= \mu(\partial\mathcal V_-)$ is
  evident since by restricting to each disk $\mu(\partial \mathcal
  V_\pm)=F_\pm(0)$. Conversely, suppose this constraint holds. Let
  $\mu_{\pm}$ denote the restriction of $\mu$ to the respective
  boundary circles. On each of the disks $\mathcal V_{\pm}$ there is a
  holomorphic function $H_{\pm}$, real-valued at the origin, such that
  $\mathrm{Re}\,H_{\pm} = \mu_{\pm}$ on $\partial \mathcal V_{\pm}$,
  and we have $H_+(0)=H_-(0)=\mu(\partial \mathcal V_{\pm})$.  Thus by
  the lemma $H_{\pm}$ are the restrictions to $\mathcal V_{\pm}$ of a
  function $F$ holomorphic on $\mathcal V$.
\end{proof}

One immediate consequence of this proposition is that a continuous
real-valued function $u$ on $\partial \mathcal V$ is the real part of
the boundary values of a holomorphic function on $\mathcal V$ if and
only if $\int_{\mathcal V_+} u\, dm =\int_{\mathcal V_-}u\, dm$.
Using this fact and the density of polynomials in $\mathcal A(\mathcal
V)$, it follows that, viewing $\mathcal A(\mathcal V)$ as a subalgebra
of $C(\partial \mathcal V)$, the closure of $\mathrm{Re}\,\mathcal
A(\mathcal V)$ in $C_{\mathbb R}(\partial \mathcal V)$ is equal to the
pre-annihilator of the measure $m_+-m_-$ on $\partial \mathcal V$
(here $m_\pm$ is Lebesgue measure on $\partial \mathcal V_\pm$).  Thus
the closure of $\mathrm{Re}\,\mathcal A(\mathcal V)$ in $C_{\mathbb
  R}(\partial \mathcal V)$ has codimension $1$ (in particular
$\mathcal A(\mathcal V)$ is a hypo-Dirichlet algebra on $\partial
\mathcal V$ as claimed). Note that the codimension is also $1$ in the
case of the annulus.

Consider the collection of $n\times n$ matrix-valued holomorphic
functions on $\mathcal V$ with positive real part, normalized to
$F(0)=I_n$, and let $\Gamma_n$ denote the extreme points of this set.
Using Proposition~\ref{prop:V-herglotz}, the results of \cite{BH,BH2}
are again applicable and exactly as in the case of the annulus, every
extreme point of $\Gamma_n$ has the form $F_\mu$ for some finitely
supported $\mu$ of the form
\begin{equation}
  \label{eqn:V-mu-def}
  \mu=\sum_{j=1}^m A_j \delta_{\alpha_j}+\sum_{j=1}^m B_j
  \delta_{\beta_j}
\end{equation}
with $\{\alpha_1, \dots \alpha_m\}, \{\beta_1, \dots \beta_m\}$
subsets of $\partial \mathcal V_\pm$ respectively, and $\sum A_j=\sum
B_j=I$ (though again not every such $F_\mu$ is an extreme point).

We say that $\overline{\mathcal V}$ is a {\em spectral set} for the
pair of commuting operators $S,T$ if the joint spectrum of $S,T$ lies
in $\overline{\mathcal V}$ and, for every polynomial $p(z,w)$, we have
\begin{equation}\label{eqn:V-spectral-def}
  \|p(S,T)\|\leq \|p\|_{\mathcal V}.
\end{equation}
Note that this condition forces $S^2=T^2$, since $p(z,w)=z^2-w^2$
vanishes on $\mathcal V$.

We say that the pair $(S,T)$ acting on the Hilbert space $H$ has a
{\em normal $\partial\mathcal V$ dilation} if there exists a pair of
commuting normal operators $U,V$ acting on a Hilbert space $K$ with
spectrum in $\partial\mathcal V$ and an isometry $\iota:H\to K$ such
that $p(S,T) =\iota^* p(U,V)\iota$ for all polynomials $p$.  
By the definition of $\mathcal V$ and the spectral theorem, the
commuting normal pairs with spectrum in $\partial \mathcal V$ are
precisely the pairs of unitary operators $U,V$ satisfying $U^2=V^2$.

\begin{proposition}
  \label{prop:V-just-check-extremals}
  Let $S,T$ be a pair of commuting operators with joint spectrum in
  $\mathcal V$ and suppose $\overline{\mathcal V}$ is a spectral set
  for $S,T$. Then $(S,T)$ has a normal $\partial \mathcal V$ dilation
  if and only if
  \begin{equation}\label{eqn:V-dilation-condition}
    F_\mu(S,T)+F_\mu(S,T)^*\succeq 0
  \end{equation}
  for all $\mu$ as in (\ref{eqn:V-mu-def}).  
\end{proposition}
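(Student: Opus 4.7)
The plan is to run the Arveson--Stinespring machinery, mirroring the argument for Theorem \ref{thm:BH} in the annulus setting. The extreme-point description furnished by the paragraph preceding the proposition reduces the complete positivity check for the natural operator-system map to the hypothesis about $F_\mu$. The forward direction is straightforward functional calculus; the converse is where the real work lies.

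For the forward direction, assume $(U,V)$ is a commuting normal pair on $K\supset H$ with joint spectrum in $\partial \mathcal V$ and $p(S,T) = \iota^* p(U,V)\iota$ for every polynomial $p$. This extends by density to $f(S,T) = \iota^* f(U,V)\iota$ for all $f \in \mathcal A(\mathcal V)$. For a given $\mu$ of the form \eqref{eqn:V-mu-def}, the function $F_\mu$ is holomorphic on $\mathcal V$ and has real part equal to the positive matrix-valued measure $\mu$ on $\partial \mathcal V$. Approximating $F_\mu$ on compact subsets of $\mathcal V$ containing $\sigma(S,T)$ by rational functions with poles off $\overline{\mathcal V}$ (each of which lies in $\mathcal A(\mathcal V)$), we obtain $F_\mu(S,T) = \iota^* F_\mu(U,V)\iota$. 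Since $F_\mu(U,V)$ is given by normal functional calculus against the joint spectral measure of $(U,V)$ on $\partial \mathcal V$, and $\mathrm{Re}\,F_\mu = \mu \succeq 0$ there, $\mathrm{Re}\,F_\mu(U,V) \succeq 0$, and compressing to $H$ yields \eqref{eqn:V-dilation-condition}.

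For the converse, set $\mathcal S = \mathcal A(\mathcal V) + \mathcal A(\mathcal V)^* \subset C(\partial \mathcal V)$. Because $\mathcal V$ is connected, $\mathcal A(\mathcal V) \cap \mathcal A(\mathcal V)^* = \mathbb C$, and so $\rho(f + g^*) := f(S,T) + g(S,T)^*$ is a well-defined unital linear map on $\mathcal S$. The spectral-set hypothesis makes $\rho$ bounded, and the plan is to show it is completely positive. Given $F \in M_n(\mathcal A(\mathcal V))$ with $\mathrm{Re}\,F \succeq 0$ on $\partial \mathcal V$, one wants $\mathrm{Re}\,F(S,T) \succeq 0$. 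Adjusting by a skew-Hermitian constant and rescaling, reduce to the normalization $\mathrm{Re}\,F(0) = I_n$. The set of holomorphic $F : \mathcal V \to M_n$ with positive real part and this normalization corresponds via $\mathrm{Re}\,F = \nu$ to the convex weak-$*$ compact set of $M_n$-valued positive measures $\nu$ on $\partial \mathcal V$ satisfying $\nu(\partial \mathcal V_+) = \nu(\partial \mathcal V_-) = I_n$; by the preceding discussion invoking \cite{BH,BH2} together with Proposition \ref{prop:V-herglotz}, the extreme points of this set are exactly the measures of the form \eqref{eqn:V-mu-def}. Applying Krein--Milman together with continuity of the holomorphic functional calculus $F \mapsto F(S,T)$ with respect to uniform convergence on compact subsets of $\mathcal V$ containing $\sigma(S,T)$, the hypothesis on extreme $F_\mu$ propagates to all such $F$, giving complete positivity of $\rho$.

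With $\rho$ shown to be unital completely positive, Arveson's extension theorem provides a ucp extension $\tilde\rho : C(\partial \mathcal V) \to B(H)$, and Stinespring's theorem gives a Hilbert space $K \supset H$ and a unital $*$-representation $\sigma : C(\partial \mathcal V) \to B(K)$ with $\tilde\rho(f) = P_H\,\sigma(f)\,|_H$. Setting $U = \sigma(z|_{\partial\mathcal V})$ and $V = \sigma(w|_{\partial\mathcal V})$ produces a commuting pair of normals with joint spectrum in $\partial \mathcal V$, so in particular $U^2 = V^2$, and $p(S,T) = \rho(p) = P_H\,p(U,V)\,|_H$ for every polynomial $p$, delivering the desired normal $\partial \mathcal V$ dilation. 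The main obstacle is the complete-positivity step: one must verify that the normalization reduction respects positivity and that continuity of $F \mapsto F(S,T)$ on the convex set is strong enough for Krein--Milman to yield an honest operator inequality from the assumed inequality on extreme points, and only after this can the Arveson--Stinespring conclusion be invoked.
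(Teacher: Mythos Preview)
Your argument is correct and rests on the same core idea as the paper's: pass from the hypothesis on the finitely supported measures \eqref{eqn:V-mu-def} to $\mathrm{Re}\,F(S,T)\succeq 0$ for \emph{all} normalized matrix-valued $F$ of positive real part, using the extreme-point description from \cite{BH,BH2} together with a Krein--Milman/Choquet argument and continuity of the holomorphic functional calculus (which is legitimate because the joint spectrum sits strictly inside $\mathcal V$).

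The only difference is in the final packaging. The paper converts the positive-real-part inequality into complete contractivity via the Cayley transform: for a matrix polynomial $P$ with $\|P\|_{\mathcal V}<1$, the function $F=(I+P)(I-P)^{-1}$ has positive real part, so $\mathrm{Re}\,F(S,T)\succeq 0$ yields $\|P(S,T)\|\le 1$, and the dilation then follows from Arveson's characterization of complete contractivity. You instead work directly on the operator system $\mathcal A(\mathcal V)+\mathcal A(\mathcal V)^*\subset C(\partial\mathcal V)$, verify complete positivity of $\rho$, and invoke Arveson extension plus Stinespring explicitly. These are two renderings of the same mechanism; your version makes the dilation construction more visible, while the paper's Cayley-transform shortcut is a line shorter. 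One small inaccuracy: the measures of the form \eqref{eqn:V-mu-def} \emph{contain} the extreme points rather than coincide with them, but since your hypothesis is assumed for all such $\mu$ this only helps.
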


\begin{proof}
  If a dilation exists, then (\ref{eqn:V-dilation-condition}) holds by
  the spectral theorem. Conversely, suppose
  (\ref{eqn:V-dilation-condition}) holds.  Then by the Choquet
  integral arguments of \cite{BH,BH2}, and the fact that the joint
  spectral radius of $S,T$ is strictly less than $1$, we have that
  $F(S,T)+F(S,T)^*\succeq 0$ for all matrix-valued functions $F$ on
  $\mathcal V$ with positive real part.  In particular, if $P$ is a
  matrix-valued polynomial with $\|P\|_{\mathcal V}<1$, then
  $F=(I+P)(I-P)^{-1}$ has positive real part, so
  $F(S,T)+F(S,T)^*\succeq 0$ and thus $\|P(S,T)\|\leq 1$. This says
  that the map $p\to p(S,T)$ is completely contractive.
\end{proof}

\begin{proof}[Proof of Theorem~\ref{thm:V-dilation}] 
  Let $\pi$ be a contractive representation of $\mathcal A(\mathcal
  V)$, then $\pi$ is determined by a pair of commuting contractions
  $S,T$ satisfying $S^2=T^2$.  To prove that $\pi$ is completely
  contractive, observe that we may replace $S,T$ by $rS,rT$ for $r<1$.
  (Note that $r^2S^2=r^2T^2$ so $rS,rT$ still determine a homomorphism
  $\pi_r$ of $\mathcal A(\mathcal V)$.)  Indeed, since the map
  $p(z,w)\to p(rz,rw)$ is completely contractive on $\mathcal A(V)$,
  if the maps $\pi_r$ are completely contractive then so is $\pi$. So,
  now that $\|S\|,\|T\|<1$, it suffices to verify the condition of
  Proposition~\ref{prop:V-just-check-extremals}.  But the proof now
  reduces to one essentially identical to the proof given for the
  annulus above; the boundary components $\partial \mathcal V_\pm$
  playing the roles of $\partial_0,\partial_1$.  The only modification
  is to Lemma~\ref{lem:perturb}. In particular when we speak of
  extending $G$ to a neighborhood of a boundary point it should be
  understood that this is a neighborhood in the union of the planes
  $z\pm w=0$ (the full variety $z^2=w^2$ in $\mathbb C^2$); all
  zero-counting is done here. The proof of
  Theorem~\ref{thm:annulus-dilation} then goes through unchanged.
\end{proof}

The question still remains of which pairs $S,T$ with $S^2=T^2$ have
$\mathcal V$ as a spectral set. It is evident that $S$ and $T$ must be
contractions, but this alone is not sufficient. As in the case of the
annulus, there is a one-parameter family of conditions that must be
checked.

\begin{theorem}
  \label{thm:V-spectral-set}
  Let $S,T$ be commuting operators with $S^2=T^2$.  Then $\mathcal V$
  is a spectral set for $S,T$ if and only if
  \begin{equation}
    \|\lambda S+(1-\lambda)T\|\leq 1    
  \end{equation}
  for every complex number $\lambda$ lying on the circle
  $|\lambda-\frac12|=\frac12$.
\end{theorem}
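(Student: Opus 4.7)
For necessity, consider the polynomial $p(z,w) = \lambda z + (1-\lambda)w$. Its restrictions to the two sheets of $\mathcal V$ are $p_+(t) = t$ and $p_-(t) = (2\lambda - 1)t$. Since $|2\lambda - 1| = 2|\lambda - \tfrac12| = 1$ on the prescribed circle, $\|p\|_{\mathcal V} = 1$, and the spectral-set hypothesis yields $\|\lambda S + (1-\lambda)T\| = \|p(S,T)\| \le 1$.

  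For sufficiency, I would first replace $(S,T)$ by $(rS, rT)$ for $0 < r < 1$: the relation $(rS)^2 = (rT)^2$ persists, the hypothesized norm bounds rescale by $r$ and so continue to hold, and the joint spectral radius becomes strictly less than $1$; it therefore suffices to show each such $\pi_r$ is completely contractive and let $r \to 1$. The key ingredient is a Cayley-transform identification: for boundary points $\alpha \in \partial\mathcal V_+,\ \beta \in \partial\mathcal V_-$ with unit-modulus parameters $\alpha', \beta'$, the scalar Herglotz function $\psi_{\alpha,\beta}$ on $\mathcal V$ with boundary real part $\delta_\alpha + \delta_\beta$ has sheet restrictions $(\alpha'+t)/(\alpha'-t)$ and $(\beta'+t)/(\beta'-t)$ by Proposition~\ref{prop:V-herglotz}, and its scalar Cayley transform extends to the linear polynomial
  \begin{equation*}
    \tfrac{1}{\alpha'}\bigl(\lambda z + (1-\lambda)w\bigr),\qquad \lambda = \tfrac{1}{2}\bigl(1 + \alpha'/\beta'\bigr),
  \end{equation*}
  on $\mathbb C^2$. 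As $\alpha'/\beta'$ varies over $\mathbb T$, $\lambda$ traces out the full circle $|\lambda - \tfrac12| = \tfrac12$, so the hypothesized norm condition is equivalent, via the operator Cayley transform, to $\mathrm{Re}\,\psi_{\alpha,\beta}(S,T) \succeq 0$ for every scalar extremal Herglotz function on $\mathcal V$.

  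To upgrade this scalar positivity to the matrix positivity demanded by Proposition~\ref{prop:V-just-check-extremals}, I would follow the annulus-style argument used for Theorem~\ref{thm:V-dilation}. Given an extremal matrix measure $\mu = \sum A_j \delta_{\alpha_j} + \sum B_\ell \delta_{\beta_\ell}$, apply Theorem~\ref{thm:naimark} to realize the rank-one positive semidefinite $A_j, B_\ell$ as compressions of rank-one projections $P_j, Q_\ell$ on a larger space, reducing the problem to a projection-valued $\nu$. Then use the $\mathcal V$-adapted form of Lemma~\ref{lem:perturb} (with zero-counts performed in the ambient variety $z^2 = w^2 \subset \mathbb C^2$) to perturb $F_\nu$ by a unitary to a function $\widetilde F$ whose boundary measure is concentrated at $n$ points on one sheet together with a single common point $z_0$ on the other. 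The decomposition $\mathrm{Re}\,\widetilde F = \sum \widetilde A_j(\delta_{z_0} + \delta_{\gamma_j})$ expresses the matrix measure as a sum of scalar extremal Herglotz pieces $\psi_{z_0,\gamma_j}$ tensored with positive semidefinite coefficients; combining this with the scalar positivity gives $\mathrm{Re}\,\widetilde F(S,T) \succeq 0$, and reversing the perturbation and the Naimark compression returns $\mathrm{Re}\,F_\mu(S,T) \succeq 0$ for the original $\mu$. The Choquet argument from the proof of Proposition~\ref{prop:V-just-check-extremals} then shows $\pi_r$ is completely contractive, and letting $r \to 1$ yields contractivity of $\pi$.

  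The genuinely new ingredient is the Cayley-transform identification in the second paragraph; the remainder is bookkeeping within the framework already developed for Theorem~\ref{thm:V-dilation}, with the main point to verify being that Lemma~\ref{lem:perturb} and the perturbation step go through for $\mathcal V$ exactly as indicated after the proof of that theorem.
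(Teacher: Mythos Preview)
Your necessity argument and the Cayley-transform identification in your second paragraph are correct and match the paper's approach exactly. However, your third paragraph does far more work than is needed, and in doing so you have essentially re-proved Theorem~\ref{thm:V-dilation} inside the proof of Theorem~\ref{thm:V-spectral-set}.

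The conclusion of Theorem~\ref{thm:V-spectral-set} is only that $\mathcal V$ is a \emph{spectral set} for $(S,T)$---a scalar condition, $\|p(S,T)\|\le\|p\|_{\mathcal V}$ for scalar polynomials $p$. Via the Cayley transform this is equivalent to $\mathrm{Re}\,F(S,T)\succeq 0$ for every \emph{scalar} holomorphic $F$ of positive real part on $\mathcal V$, and by the scalar ($n=1$) Choquet argument of \cite{BH,BH2} it suffices to check this on the scalar extreme points, which are precisely the two-point-mass measures $\delta_\alpha+\delta_\beta$. Your second paragraph already handles these, so the proof is complete at that point; the paper's proof stops there. Your third paragraph---Naimark dilation, the $\mathcal V$-adapted Lemma~\ref{lem:perturb}, the perturbation to a single common boundary point---is the machinery that upgrades scalar contractivity to complete contractivity, i.e.\ the content of Theorem~\ref{thm:V-dilation}, and is not required here. (Your claim that ``it suffices to show each $\pi_r$ is completely contractive'' is true but overshoots; it suffices to show each $\pi_r$ is contractive.) There is also a slight circularity risk in invoking Proposition~\ref{prop:V-just-check-extremals} as stated, since that proposition assumes $\overline{\mathcal V}$ is already a spectral set; you correctly sidestep this by appealing to its Choquet-argument content rather than its formal statement, but it would be cleaner simply to drop the matrix-valued paragraph entirely.
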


\begin{proof} 
  For the $\lambda$ described in the theorem one may check that the
  functions
  \begin{equation}
    \lambda z+(1-\lambda)w
  \end{equation}
  are bounded by $1$ on $\mathcal V$, so the condition is necessary. 

  Conversely, using again the Choquet integral arguments of
  \cite{BH,BH2} for $\mathcal V$ to be a spectral set it suffices to
  check that $\mathrm{Re}\,F_\mu(S,T)\succeq 0$ for every extreme
  point $F_\mu$ of the set of functions of positive real part on
  $\mathcal V$ (normalized to $F(0,0)=1$). From \cite{BH,BH2} we also
  know that the $\mu$ representing these functions are precisely those
  that put a single unit point mass on each boundary component.  By
  the description of $\mbox{Hol}\mathcal V$ in
  Lemma~\ref{lem:V-extension}, these are the functions whose
  restrictions satisfy
  \begin{equation}
    F_+(t)=\frac{1+\alpha t}{1-\alpha t}, \quad F_-(t)=\frac{1+\beta
      t}{1-\beta t}  
  \end{equation}
  for unimodular constants $\alpha, \beta$. Taking Cayley transforms
  $f_\pm = (F_\pm-1)(F_\pm+1)^{-1}$ we get simply the functions
  \begin{equation}
    f_+(t)=\alpha t, f_-(t)=\beta t
  \end{equation}
  and we require $\|f(S,T)\|\leq 1$ for all $\alpha, \beta$ where $f$
  is any function on $\mathbb D^2$ with $f|_{\mathcal V_\pm}=f_\pm$.
  Multiplying $f$ by $\alpha^*$, we may assume $\alpha=1$, and now it
  is straightforward to check that, putting
  $\lambda=\frac{1+\beta}{2}$, the functions
  \begin{equation}
    f(z,w)=\lambda z+(1-\lambda)w  
  \end{equation}
  do the job. 
\end{proof}

Combining Theorems~\ref{thm:V-dilation} and \ref{thm:V-spectral-set}
we have:

\begin{corollary}
  \label{cor:V-dilation}
  Let $S,T$ be commuting operators on Hilbert space with $S^2=T^2$.
  Then $S,T$ dilate to a commuting pair of unitaries $U,V$ satisfying
  $U^2=V^2$ if and only if
  \begin{equation}
    \|\lambda S+(1-\lambda)T\|\leq 1
  \end{equation}
  for every complex number $\lambda$ on the circle
  $|\lambda-\frac12|=\frac12$.
\end{corollary}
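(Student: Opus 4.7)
The plan is to chain together Theorem \ref{thm:V-dilation} and Theorem \ref{thm:V-spectral-set}, using the standard dictionary between (completely) contractive representations of a uniform algebra and (normal) dilations on its Shilov boundary. Throughout I identify a pair of commuting operators $S,T$ on $H$ with $S^2=T^2$ with the unital homomorphism $\pi$ of $\mathbb C[z,w]/(z^2-w^2)$ sending $(z,w)$ to $(S,T)$.

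For the forward direction, assume $\|\lambda S+(1-\lambda)T\|\le 1$ on the circle $|\lambda-\tfrac12|=\tfrac12$. Taking $\lambda=1$ and $\lambda=0$ gives $\|S\|,\|T\|\le 1$, which together with $S^2=T^2$ places the joint spectrum in $\overline{\mathcal V}$. Theorem~\ref{thm:V-spectral-set} then tells us that $\overline{\mathcal V}$ is a spectral set for $(S,T)$, so $\pi$ extends to a contractive unital homomorphism of $\mathcal A(\mathcal V)$. Theorem~\ref{thm:V-dilation} upgrades this to a completely contractive representation. Now apply Arveson's extension theorem followed by Stinespring: since $\mathcal A(\mathcal V)$ sits inside $C(\partial\mathcal V)$ with Shilov boundary $\partial\mathcal V$, there exist $K\supset H$ and a $*$-representation $\rho:C(\partial\mathcal V)\to B(K)$ such that $\pi(f)=P_H\rho(f)|_H$ for every $f\in\mathcal A(\mathcal V)$. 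Setting $U=\rho(z)$ and $V=\rho(w)$ produces commuting normal operators with joint spectrum in $\partial\mathcal V$; but $\partial\mathcal V$ is the set of points $(z,w)$ with $|z|=|w|=1$ and $z^2=w^2$, so by the spectral theorem $U,V$ are unitaries satisfying $U^2=V^2$, and $p(S,T)=P_H p(U,V)|_H$ as required.

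For the converse, suppose $(U,V)$ is a commuting unitary pair with $U^2=V^2$ dilating $(S,T)$ in the sense that $p(S,T)=P_Hp(U,V)|_H$ for every polynomial $p$. Since the joint spectrum of $(U,V)$ lies in $\partial\mathcal V$, we have
\begin{equation*}
  \|p(S,T)\|\le \|p(U,V)\|\le \sup_{\partial\mathcal V}|p|=\|p\|_{\mathcal V}
\end{equation*}
for every polynomial $p$, so $\overline{\mathcal V}$ is a spectral set for $(S,T)$, and Theorem~\ref{thm:V-spectral-set} yields the norm condition on the $\lambda$-circle.

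The main obstacle I anticipate is bookkeeping around the hypothesis of Theorem~\ref{thm:V-spectral-set}, which was stated for pairs with joint spectrum in the open variety $\mathcal V$ (through Proposition~\ref{prop:V-just-check-extremals}). This is handled exactly as in the proof of Theorem~\ref{thm:V-dilation}: replace $(S,T)$ by $(rS,rT)$ for $r<1$, apply the dilation machinery, and let $r\to 1$ using weak-operator compactness of the unit ball to extract a dilation for $(S,T)$ itself. Beyond this technicality, everything is a mechanical consequence of the two preceding theorems together with Arveson--Stinespring.
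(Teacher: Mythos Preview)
Your proposal is correct and follows exactly the route the paper intends: the paper's entire proof is the single sentence ``Combining Theorems~\ref{thm:V-dilation} and \ref{thm:V-spectral-set} we have,'' and you have simply unpacked that combination, including the Arveson--Stinespring step that converts ``completely contractive'' into an honest normal $\partial\mathcal V$ dilation and the $r<1$ reduction already used in the proof of Theorem~\ref{thm:V-dilation}. One small remark: after invoking Stinespring you should note (or rely on the standard fact) that because $\pi$ is multiplicative on $\mathcal A(\mathcal V)$ the space $H$ is semi-invariant for $\rho(\mathcal A(\mathcal V))$, which is what guarantees $p(S,T)=P_Hp(U,V)|_H$ for all polynomials rather than merely for the generators.
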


\end{document}